\newtheorem{theorem}{Theorem}[section]
 \newtheorem{proposition}[theorem]{Proposition}
 \newtheorem{lemma}[theorem]{Lemma}
 \theoremstyle{definition}
 \newtheorem{definition}[theorem]{Definition}
 \newtheorem{example}[theorem]{Example}
 \theoremstyle{remark}
 \newtheorem{remark}[theorem]{Remark}
 \numberwithin{equation}{theorem}
\newcommand{\fg}{{\mathfrak{g}}}
\newcommand{\NN}{{\mathbb{N}}}
\newcommand{\QQ}{{\mathbb{Q}}}
\newcommand{\ZZ}{{\mathbb{Z}}}
\newcommand{\FF}{{\mathbb{F}}}
\newcommand{\RR}{{\mathbb{R}}}
\newcommand{\CC}{{\mathbb{C}}}
\newcommand{\GG}{{\mathbb{G}}}
\newcommand{\oE}{{\lsup{\circ}{\mathcal{E}}}}
\newcommand{\Aut}{{\operatorname{Aut}}}
\newcommand{\kernel}[1]{{\operatorname{ker}\left( #1 \right)}}
\newcommand{\Spec}[1]{{\operatorname{Spec}\left(#1\right)}}
\newcommand{\Hom}{{\operatorname{Hom}}}
\newcommand{\SL}[2]{{\operatorname{SL}(#1)_{#2}}}
\newcommand{\id}{{\operatorname{id}}}
\newcommand{\Gal}[1]{{\operatorname{Gal}(#1)}}
\newcommand{\Fet}[1]{{\operatorname{Fet}(#1)}}
\newcommand{\Fr}{{\operatorname{Fr}}}
\newcommand{\ku}{{K[\mathcal{U}]}}
\newcommand{\conj}{{\operatorname{conj}}}
\newcommand{\pr}{{\operatorname{pr}}}
\newcommand{\invlim}[1]{\lim\limits_{\overleftarrow{#1}}}
\newcommand{\cl}[1]{{\overline{#1}}}  
\newcommand{\un}[1]{{\underline{#1}}} 
\newcommand{\algfg}[1]{{\operatorname{\pi_1}(#1)}}    
\newcommand{\lsup}[2]{{\vphantom{#2}}^{#1}{#2}} 
\newcommand{\tq}{{\ \vert\ }}
\newcommand{\ceq}{{\, :=\, }}
\newcommand{\iso}{{\, \cong\, }}
\newcommand{\ie}{{\emph{i.e.}}}
\newcommand{\Conj}{{\operatorname{conj}}}
\title{Galois Actions on $\ell$-adic Local Systems and Their Nearby Cycles: A Geometrization of Fourier Eigendistributions on the $p$-adic Lie Algebra $\mathfrak{sl}(2)$}
\author{Aaron J Christie}
\begin{document}

   %
   %
   \makethesistitle

   %
   %

   %
   %
   \altchapter{Abstract}

In this thesis, two $\cl{\QQ}_\ell$-local systems, $\oE$ and $\oE^\prime$ (Definition~\ref{def:thelocalsystems}) on the regular unipotent subvariety $\mathcal{U}_{0,K}$ of $p$-adic $\SL{2}{K}$ are constructed. Making use of the equivalence between $\cl{\QQ}_\ell$-local systems and $\ell$-adic representations of the \'etale fundamental group, we prove that these local systems are equivariant with respect to conjugation by $\SL{2}{K}$ (Proposition~\ref{prop:equivariance}) and that their nearby cycles, when taken with respect to appropriate integral models, descend to local systems on the regular unipotent subvariety of $\SL{2}{k}$, $k$ the residue field of $K$ (Theorem~\ref{prop:mainlocalsystems}). Distributions on $\operatorname{SL}(2,K)$ are then associated to $\oE$ and $\oE^\prime$ (Definition~\ref{def:dists}) and we prove properties of these distributions. Specifically, they are admissible distributions in the sense of Harish-Chandra (Proposition~\ref{prop:properties1}) and, after being transferred to the Lie algebra, are linearly independent eigendistributions of the Fourier transform (Proposition~\ref{prop:properties2}). Together, this gives a geometrization of important admissible invariant distributions on a nonabelian $p$-adic group in the context of the Local Langlands program.

   %
   %
   \altchapter{Acknowledgements}

Somebody pin a medal on Dr. Clifton Cunningham, my supervisor through
two degrees now, or else I'm going to have to make him something with
cardboard and glitter glue, which does him no justice. Dedicated and
enthusiastic to a degree that don't really think I even understand, I
was never at a loss for help or a new idea or just a little shove
whenever I got, let's not say lazy, but...inert. May you end up
with the army of talented grad students you deserve. Now give me the
location of that windmill.

Although it's my general policy to keep the acknowledgements cordoned
off as a jargon-free zone, I can't thank Dr. Patrick Brosnan and all
the participants in the Nearby Cycles Seminar, held between the
Universities of Calgary and British Columbia, without saying nearby
cycles. The results in this thing would not have been possible without the
thorough straightening out my understanding of the nearby cycles
functor received just by sitting and listening.

Moving on from jargon to acronyms, NSERC provided me with money, which
never goes unappreciated, as did the University of Calgary Faculty of
Graduate Studies. All of the drugs I bought with that money were
legal, I promise.

Of course one can't do math and spend money all the time; my friends
and colleagues at the university get the credit for whatever sanity
and proper social adjustment I manage to display. It can't be
easy. Here they are: Karin Arikushi, Amy Cheung, Cameron Hodgins,
Mathias Kaiser, Masoud Kamgar, Max Liprandi, Doug
MacLean, Diane Quan, David Roe, Eric Roettger, Milad Sabeti, Pooyan
Shirvani, Ryan Trelford, Colin Weir, Kjell Wooding.

And of course the most: the moms, Merle Christie, and the sister,
Megan. \emph{Sine qua non.}

   %
   %

   %
   %
   \begin{singlespace}
      \tableofcontents
   \end{singlespace}

   %
   %

   %
   %

   %
   %
   
   \altchapter{Introduction}

Here is what happens in this thesis in a rough nutshell: with a pair
of important admissible invariant distributions on $p$-adic $\operatorname{SL}(2)$ in
mind, a pair of equivariant $\cl{\QQ}_\ell$-local systems on
the \'etale site of $p$-adic $\operatorname{SL}(2)$ are produced. Then it is shown
how these sheaves encode all of the information held in the
distributions. In sum, this provides a new kind of geometrization in
the context of the local Langlands program. There are several crossing
threads in that description; let us start pulling them.

The first is the study of representations of $p$-adic groups, which
can be said to have begun with Mautner's ``\emph{Spherical functions
  over $p$-adic fields}''~\cite{mautnerI} in 1958, and has enjoyed a
remarkably fertile half century since then. Ultimately, the
representations of a $p$-adic group $G$ we want to determine are the
smooth representations, which are those $\pi\colon G\longrightarrow
\Aut(V)$ such that $V = \cup V^K$, where $K$ ranges over all open compact
subgroups of $G$. If each $V^K$ is also finite dimensional, the
representation is called admissible. In that paper and its
sequel~\cite{mautnerII}, Mautner
introduced two important classes of smooth representations,
including supercuspidal representations. All irreducible smooth
representations of a $p$-adic group $G$ are subquotients of
representations induced from supercuspidal representations of
parabolic subgroups of $G$. 

Distributions enter this picture with the work of
Harish-Chandra. Among other things, he developed and introduced the
appropriate notion of a character of an admissible representation as a
distribution on the Hecke algebra of locally constant and compactly
supported functions on $G$~\cite{HC70}. The intimate connection this established
between the representation theory of $p$-adic groups and harmonic
analysis only deepened in the wake of Langlands's work, particularly
the famous conjectural correspondence that bears his name.

A rough (and admittedly nonstandard) way of describing the Local
Langlands Correspondence is to say that it promises a function between
characters of irreducible admissible representations to certain
representations (called L-parameters) of the Galois group of the
local field in play. In general this function is not injective, but it
is predicted that its fibres are parametrized by a specific finite
group. The finite set of representations whose characters appear together in the fibre
above a single parameter is called an L-packet of representations.

The particular distributions at hand in this thesis appear in the
vector space spanned by the characters
of the admissible representations in the unique L-packet of size 4
for $p$-adic $\operatorname{SL}(2)$ and possess other remarkable properties. They are
stably conjugate; they are supported on
topologically unipotent elements; and, when transported to the Lie
algebra, they are linearly independent
eigendistributions---with the same eigenvalue---for the Fourier
transform (in fact, they span the unique
eigenspace for the Fourier transform on the Lie algebra of $p$-adic
$\operatorname{SL}(2)$, but that is not proved here). What is achieved in this
thesis is a geometrization of these distributions: they are
turned into sheaves. Which is the other major thread running
through the background of this thesis.

An example may make this notion of `geometrization' more concrete. Let
$G$ be an algebraic group, commutative and connected, over a finite
field $\FF_q$. The Lang map,
$x\mapsto\frac{\operatorname{Frob}(x)}{x}$, defines an \'etale cover
of $G$ with automorphism group isomorphic $G(\FF_q)$. Therefore, the
\'etale fundamental group $\algfg{G,\cl{g}}$ (described in
Section~\ref{section:fundamentalgroup} of
Chapter~\ref{ch:ladicsheaves}) comes equipped with a map
$\algfg{G,\cl{g}}\rightarrow G(\FF_q)$. Combined with a character $\theta\colon
G(\FF_q)\rightarrow\cl{\QQ}_\ell^\times$ and the canonical map
$\algfg{\cl{G},\cl{g}}\rightarrow\algfg{G,\cl{g}}$,
where $\cl{G} = G\times_{\Spec{\FF_q}}\Spec{\cl{\FF}_q}$, we get a
character of $\algfg{\cl{G},\cl{g}}$:
\[
\algfg{\cl{G},\cl{g}}\rightarrow\algfg{G,\cl{g}}\rightarrow
G(\FF_q)\stackrel{\theta}{\rightarrow}\cl{\QQ}_\ell^\times .
\]
This character is equivalent to an $\ell$-adic local system
$\mathcal{L}$ on $\cl{G}$ possessing an isomorphism
\[
\phi\colon\operatorname{Frob}^*\mathcal{L}\stackrel{\sim}{\longrightarrow}\mathcal{L}.
\]
For any $\FF_q$-point $\cl{g}$ of $G$, the composition of the
canonical isomorphism 
\[
\mathcal{L}_{\cl{g}} =
\mathcal{L}_{\operatorname{Frob}(\cl{g})} \iso
(\operatorname{Frob}^*\mathcal{L})_{\cl{g}}
\]
with the map on stalks
induced by $\phi$ gives an automorphism of $\mathcal{L}_{\cl{g}}$
whose trace belongs to $\cl{\QQ}_\ell^\times$. The function
$t^{\mathcal{L}}_{\operatorname{Frob}}\colon
G(\FF_q)\rightarrow\cl{\QQ}_\ell^\times$ this defines is equal to $\theta$.

The local system $\mathcal{L}$ is in fact an example of a character sheaf, which is the
foremost example of geometrization in representation theory. George
Lusztig introduced character sheaves in a
series of papers that began to appear in 1985~\cite{CShI, CShII,
  CShIV, CShV, CShVprime}. In their original iteration, character
sheaves were defined on general linear groups over an algebraically
closed field of positive characteristic. From them, Lusztig was able
to obtain, by means of the Grothendieck-Lefschetz trace formula, all
irreducible representations of $\operatorname{GL}(n,\FF_{p^m})$ for
all $n$ and $m$, uniting the representation theory of all these groups in an
object related to the geometry of $\operatorname{GL}(n)$.

Lusztig has gone on to generalize character sheaves to larger and
larger classes of groups~\cite{LusCSandGen}, and despite the fact that they are only ever
applied to groups over algebraically closed fields of positive
characteristic, their definition makes sense even for groups over
fields of characteristic 0. Further expansions of the character sheaf
perspective have been carried out by, for example, Drinfeld and his
collaborators (in positive characteristic), and by Cunningham and
Salmasian, who have taken steps to develop character sheaves for use
in characteristic 0~\cite{cunningsalmasian}. 

As one might expect, this latter effort meets extra difficulty. For
example, if $G$ is a linear algebraic group defined over the generic
fibre of a Henselian trait, accessing Lusztig's methods---or more to
the point, accessing the trace formula---requires moving whatever
geometric objects one decides to work with from $G$ to the special
fibre of an integral model for $G$ (or some
scheme closely related to $G$). A choice of integral model for $G$ makes this
possible. And, fortunately, after making such a choice there is machinery suited to
moving sheaves from the generic fibre to the special fibre: Deligne's adaptation of the nearby cycles
functor to schemes defined over Henselian traits.
As described in SGA 7(II)~\cite{SGA7ii}, the nearby cycles functor is robust
enough to even allow the addition of Galois actions to the sheaves one
chooses to study, making it possible to work even on groups defined
over algbraically unclosed fields. Yet another degree of freedom is
offered by the multiplicity of integral models that can be chosen
to use in the definition of the functor.

With all this, what remains is finding appropriate geometric objects
(by which we always mean sheaves of some kind or other) on $G$ so
that, when fed into this machinery, it reproduces aspects of the
representation theory of $p$-adic groups as it is known to us
presently. One use of this perspective is to try to replace the basic
objects on both sides of the Local Langlands Correspondence with a
geometric avatar, which would allow the techniques of algebraic geometry to be
brought to bear on the correspondence itself. Recent work (in
preparation) by Achar, Cunningham, Kamgarpour, and Salmasian has found
how to geometrize the Galois side of the Local Langlands
Correspondence. This thesis establishes a result in this spirit, which
is what we mean by `geometrization in the context of the local Langlands program.'

Specifically, we work on $G = \SL{2}{K}$, where $K$ is a $p$-adic
field with finite residue field $k$ of positive characteristic. In
Chapter~\ref{ch:somelocalsystems}, we define two local systems on the
regular unipotent subvariety $\mathcal{U}_{0,K}$, which are
our chosen geometric objects on $G$, and equip their base change to
$\mathcal{U}_{0,\cl{K}}$ with an action of $\Gal{\cl{K}/K}$. In fact,
these local systems are very special: when extended by zero from
$\mathcal{U}_{0,K}$, they define two perverse sheaves on $G$ that are
identical after change of base to $\cl{K}$. That perverse sheaf on
$\cl{G}$ is the unique cuspidal unipotent character sheaf for
$\cl{G}$. Deploying the heavy machinery of perverse sheaves and
character sheaves is not necessary to prove the results in this
thesis, so these facts are not proved here. But the results we do
obtain show that the unique cuspidal unipotent
character sheaf for $\cl{G}$ can be equipped with two very different
Galois actions, and shows how the resulting Galois sheaves determine
two very different, and very special, distributions on $G(K)$. 

These result are contained in the remainder of
Chapter~\ref{ch:somelocalsystems} and in Chapter~\ref{ch:mainresult},
where important properties of the pair of local systems and their
nearby cycles are shown. In particular, it is shown that the local systems are equivariant with
respect to the conjugation action of $G$ on itself
(Proposition~\ref{prop:equivariance}), and that with respect to
certain integral models for $\mathcal{U}_{0,K}$ their nearby
cycles---which are local systems on $\mathcal{U}_{0,\cl{k}}$ equipped with an action of
$\Gal{\cl{K}/K}$---descend to local systems on
$\mathcal{U}_{0,k}$ (Theorem~\ref{prop:mainlocalsystems}). 
Finally, in Chapter~\ref{ch:distributions}, we apply all those results by defining, with
the aid of the Grothendieck-Lefschetz trace formula, distributions on
the Hecke algebra of $G(K)$ (and $\mathfrak{g}(K)$, $\mathfrak{g}$ the
Lie algebra of $G$). These distributions are shown to be linearly independent invariant
admissible distributions that are eigendistributions of the Fourier
transform (Propositions~\ref{prop:properties1}
and~\ref{prop:properties2}). Together, this establishes a successful
geometrization.

Before all that, the first two chapters are spent providing background for the work of
Chapter~\ref{ch:somelocalsystems}. Chapter~\ref{ch:ladicsheaves} gives
the definitions and basic properties of $\pi$-adic sheaves and
$\cl{\QQ}_\ell$-local systems, which are the basic categories
underlying the geometric objects at the centre of the thesis. Despite
the name, these are not honest sheaves, but projective systems
composed of sheaves. As such, they are somewhat cumbersome, but, as
was only hinted at in the example above, there is an equivalence of categories
between $\cl{\QQ}_\ell$-local systems and $\ell$-adic representations
of the \'etale fundamental group. Accordingly, the \'etale fundamental
group is defined and this equivalence is described in
Chapter~\ref{ch:ladicsheaves}. Chapter~\ref{ch:galoisactions}
recalls Deligne's study of Galois actions on sheaves in
SGA~\cite[Expos\'e XIII]{SGA7ii} and describes its extension to
$\cl{\QQ}_\ell$-local systems, which is a crucal ingredient of the
results in Chapter~\ref{ch:mainresult}. The opening sections of
Chapter~\ref{ch:mainresult} recall the other crucial ingredient,
the nearby cycles functor.

Altogether, this thesis represents the first example of
geometrization, in the context of the local Langlands program, of
sophisticated admissible invariant distributions on nonabelian
$p$-adic groups. As such, it invites a question: can this be done
more generally? Already, joint work with Aubert and Cunningham (in
preparation) has found that the techniques employed in this thesis can
indeed be used to geometrize other distributions on other $p$-adic
groups. However, those distributions are also eigendistributions of
the Fourier transform and, as such, represent a relatively small
class. And, the local systems that appear as geometric avatars of
distributions in that class are, like our local systems, closely
related to cuspidal unipotent character sheaves. So, is it possible to
geometrize all admissible invariant distributions on $p$-adic groups?
While the answer is not known, and
certainly depends on what is meant by geometrization, the work here
points a way forward: by combining the techniques developed in this
thesis with recent work by Cunningham and Roe on the geometrization of
characters of $p$-adic tori, it appears that it may be possible to
find geometric avatars for a significant class of admissible invariant
distributions, and in doing so, shed light on the Local Langlands
Correspondence, one of the most important open problems in number theory.



   %
   %
   \mainmatter

\chapter{The \'Etale Fundamental Group, $\pi$-adic Sheaves,
  $\cl{\QQ}_\ell$-Local Systems}\label{ch:ladicsheaves} 

The main results of this thesis concern a pair of
$\cl{\QQ}_\ell$-local systems defined on the smooth subvariety of
regular unipotent elements in $\SL{2}{K}$. The category of
$\cl{\QQ}_\ell$-local systems is obtained from the category of $\pi$-adic
sheaves. These are somewhat complicated and unwieldy objects, so it is
our preference to exploit an equivalence between
$\cl{\QQ}_\ell$-local systems and the category of $\ell$-adic
representations of the \'etale fundamental group, and work instead
with the representations rather than the local systems themselves.
Therefore, this chapter contains information on the \'etale
fundamental group, $\pi$-adic sheaves, $\cl{\QQ}_\ell$-local systems
tailored toward understanding the equivalence of categories that will be used throughout the
rest of thesis. Because all the results in this chapter are standard,
we avoid giving proofs except when such details help with understanding some
aspect of the work in later chapters, but do provide references to works where complete proofs
can be found.

\section{\'Etale Fundamental Group}\label{section:fundamentalgroup}

The theory of the \'etale fundamental group originated with
Grothendieck, and the definitive reference for all the material
in this section are the volumes of SGA (in particular SGA
1~\cite{SGA1}). Another reference, leaner and better suited to the
needs of this thesis, is Szamuely's \emph{Galois Groups and Fundamental Groups}~\cite{szamuely}.
\subsection{\'Etale Covers}

A morphism of arbitrary schemes $f\colon Y\rightarrow X$ is \emph{\'etale} if it is of finite type,
flat, and unramified\footnote{Along with a basic familiarity with
  algebraic geometry, we elect to assume the reader's
  familiarity with these terms. In any event, they are fully explained in
  the sources listed above.}. An \emph{\'etale cover} is an \'etale morphism that is also finite
and surjective. The category of \'etale covers of a scheme $X$ will be denoted
$\Fet{X}$. 

Some basic properties enjoyed by \'etale morphisms and \'etale covers, all but
immediate from the definition (the reader may also consult
Milne~\cite[Ch.~1]{milnebook}), are:
\begin{itemize}
\item The composite of two \'etale morphisms (resp. covers) is
  also an \'etale morphism (resp. cover).
\item If $g\circ f$ and $g$ are \'etale morphisms (resp. covers),
  then $f$ is also an \'etale morphism (resp. cover).
\item If $f\colon Y\rightarrow X$ is \'etale, and $g\colon
  Z\rightarrow X$ is any morphism, the map $f^Z\colon Y\times_X Z\rightarrow Z$
  obtained by pullback (the \emph{base change} of $f$ by $g$) is again
  \'etale. The same is true for \'etale covers. 
\end{itemize}
\begin{example}\label{ex:etalecovers}
\begin{enumerate}
\item Some examples are immediately obvious:  any scheme
is an \'etale cover of itself, as is the disjoint union of finitely many
copies of that scheme. \'Etale covers of this form are called
\emph{trivial}. A scheme with no nontrivial \'etale covers is
\emph{simply connected}.
\item\label{eg:etaleoverspeck} \'Etale covers of the spectrum of a field $K$ are all by
  spectra of finite products of finite separable extensions of $K$
  (which are quite sensibly called finite \'etale $K$-algebras).
\item\label{eg:etaleoverGm} The connected \'etale covers of the multiplicative group scheme
  \[\GG_{m,K} = \Spec{K[x,y]/(xy-1)}\] when $K$ is algebraically closed
    are, after passing to coordinate rings, of the
    form \begin{eqnarray*} [d]\colon K[x,y]/(xy-1) & \longrightarrow
      & K[x,y]/(xy-1) \\ x & \longmapsto & x^d \\ y & \longmapsto &
      y^{-d} \end{eqnarray*} for any $d\in\ZZ$.
  \par When $K$ is not algebraically closed, the
    \'etale covers are \begin{eqnarray*} [d]_F\colon K[x,y]/(xy-1) & \longrightarrow
      & F[x,y]/(xy-1) \\ K & \hookrightarrow& F \\ x & \longmapsto & x^d \\ y & \longmapsto &
      y^{-d} \end{eqnarray*} for any integer $d$ and finite separable
    extension $K\hookrightarrow F$.
\item\label{eg:etaleoveraffineline} More examples emerge in the wake of the following fact: for an
integral normal scheme $X$ with function field $F$, every connected
\'etale cover of $X$ arises as the
normalization of $X$ in a finite separable exension of
$F$.\footnote{See Lenstra~\cite[Thm.~6.13]{lenstra} for a proof.} But not every
finite separable extension of $K$ yields an \'etale cover; the
resulting morphism may be ramified. This fact combined with
Minkowski's theorem shows that there are no nontrivial \'etale covers
of $\Spec{\ZZ}$, nor are there any of $\mathbb{A}^1_{K}$ when $K$ is
algebraically closed, which accords with the simple connectedness of
the affine line over $\CC$.
\end{enumerate}
\end{example}

 Beyond the properties listed,
\'etale covers possess, in parallel with ordinary
topological covering spaces, a local triviality
property. Namely, if $f\colon Y\rightarrow X$ is an \'etale cover and
$X$ is connected, then there exists an \'etale cover $g\colon
Z\rightarrow X$ such that $Y\times_XZ$ is isomorphic to a disjoint
union of finitely many copies of $Z$.\footnote{This is proved in
  Szamuely~\cite[Prop. 5.2.9]{szamuely}.} This fact is the foundation
that the categorical equivalence between local systems and fundamental
group representations is built upon.

\subsection{Finite Locally Constant Sheaves}

A sheaf $\mathcal{F}$ on the \'etale site of a scheme $X$ is \emph{finite locally constant} if
there exists a covering by \'etale opens\footnote{Meaning that, taken together, the images of the maps $X_i
  \rightarrow X$ are surjective onto the underlying topological space
  of $X$.} $\{X_i\rightarrow X\tq i\in
I\}$ such that $\mathcal{F}|_{X_i}$ is constant with finite
stalks. This is equivalent to the set of sections of $\mathcal{F}|_{X_i}$ over
any \'etale open being finite. We will write $\operatorname{Flc}(X)$
for the category of finite locally constant
sheaves on $X$.

There is functor from $\operatorname{Fet}(X)$ to
$\operatorname{Flc}(X)$, given by associating to every \'etale cover
$f\colon Y\rightarrow X$ its sheaf of sections, $\mathcal{F}_Y$. For each \'etale
scheme $U$ over $X$,
\[
\mathcal{F}_Y(U)\ceq \{\sigma\colon U\rightarrow U\times_XY\tq
f^U\circ\sigma = \id_U\}.
\]
If we restrict attention to each connected component of $X$, that
$\mathcal{F}_Y$ is a sheaf of \emph{finite} sets is just a consequence
of the fact that $f$ is a finite morphism; local constancy is a result of the
local triviality of $f$ together with the fact that any section of an \'etale
cover induces an isomorphism of connected components of the base space onto
connencted components of the cover\footnote{A proof of this can be
  found in~\cite[Prop. 5.3.1]{szamuely}.}. Descent techniques whose
description would lead too far afield
provide a quasi-inverse to this functor, proving an equivalence of
categories that we record here for reference later\footnote{This is
  proved in Conrad's notes~\cite[Thm. 1.1.7.2]{conradetale}.}:
\begin{proposition}\label{prop:coverlocallyconstantcorr}
The category $\Fet{X}$ of a scheme $X$ is
equivalent to the category $\operatorname{Flc}(X)$ of finite locally
constant sheaves of sets on $X$. 
\end{proposition}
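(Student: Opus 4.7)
The plan is to establish the equivalence by writing down the functor $\Phi\colon \Fet{X} \to \Flc(X)$ sending an \'etale cover $f\colon Y \to X$ to its sheaf of sections $\mathcal{F}_Y$, verifying that this functor does in fact land in $\Flc(X)$, and then constructing a quasi-inverse by \'etale descent. Fully faithfulness and essential surjectivity will then be obtained as a byproduct of the descent construction.

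First I would check that $\Phi$ is well-defined. The sheaf axiom for $\mathcal{F}_Y$ is routine: sections glue because they glue on the underlying topological data. To see that $\mathcal{F}_Y$ has finite stalks, I would restrict to a connected component of $X$, where finiteness of $f$ makes the fibre of $Y \to X$ at any geometric point a finite set, and the sections over a connected \'etale neighborhood of such a point are in bijection with the connected components of the fibre (here I use the fact cited in the excerpt that sections of an \'etale cover identify connected components of base and cover). Local constancy then follows immediately from the local triviality property recalled just before the statement: cover $X$ by \'etale opens $Z \to X$ trivializing $f$, so that $Y \times_X Z$ is a disjoint union of finitely many copies of $Z$; over each such $Z$, the sheaf $\mathcal{F}_Y|_Z$ is manifestly constant with finite stalk.

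Next I would construct a quasi-inverse $\Psi\colon \Flc(X) \to \Fet{X}$. Given $\mathcal{F} \in \Flc(X)$, choose an \'etale cover $\{X_i \to X\}$ trivializing $\mathcal{F}$, so that $\mathcal{F}|_{X_i}$ corresponds to a finite constant sheaf, hence to a trivial \'etale cover $Y_i \to X_i$. The descent datum for $\mathcal{F}$ on double overlaps $X_i \times_X X_j$ translates into descent datum on the trivial covers $Y_i$. The hard part will be invoking effectivity of \'etale descent for finite \'etale morphisms (which holds because finite morphisms are affine, and affine morphisms satisfy effective descent for the fpqc, hence the \'etale, topology) to glue the $Y_i$ to a single \'etale cover $Y \to X$. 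This is the real content of the argument; the remaining verifications are formal.

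Finally, I would check the two natural isomorphisms $\Psi \circ \Phi \simeq \id_{\Fet{X}}$ and $\Phi \circ \Psi \simeq \id_{\Flc(X)}$. Both are local in nature on $X$: after pulling back along a trivializing \'etale cover, an \'etale cover becomes a finite disjoint union of copies of the base while a finite locally constant sheaf becomes a finite constant sheaf, and for these the two constructions are obviously mutually inverse. \'Etale descent then transports the isomorphism back to $X$. The main obstacle throughout is the descent step in constructing $\Psi$; once that is in hand, everything else is bookkeeping, and I would therefore refer to Conrad's notes \cite{conradetale} for the detailed verification, as the excerpt already does.
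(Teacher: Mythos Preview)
Your proposal is correct and follows essentially the same approach as the paper: define the sheaf-of-sections functor, verify it lands in $\operatorname{Flc}(X)$ using finiteness of the cover and local triviality, and then invoke \'etale descent (deferring to Conrad's notes) for the quasi-inverse. The paper in fact gives less detail than you do---it simply says ``descent techniques whose description would lead too far afield provide a quasi-inverse'' and cites \cite[Thm.~1.1.7.2]{conradetale}---so your sketch of the descent step (trivialize locally, use effectivity of descent for affine morphisms, check the natural isomorphisms locally) is a faithful elaboration rather than a departure.
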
 

With this equivalence in hand, the connection between finite locally constant
 sheaves and the \'etale fundamental group is a
relatively simple matter to establish because the \'etale fundamental group
classifies \'etale covers, just as the topological fundamental group
classifies topological covers. Therefore, we turn to describing the
\'etale fundamental group.

\subsection{Geometric Points, Fibres, and Galois Covers}

In parallel with the topological
case, defining the \'etale fundamental group requires endowing both
the base space and its covering spaces with base points. A
\emph{geometric point} $\cl{y}$ of a scheme $Y$ is a morphism
\[
\cl{y}\colon \Spec{\Omega}\longrightarrow Y,
\]
where $\Omega$ is a separably closed field. This is equivalent to a
choice of point $y$ in the underlying topological space of
$Y$ together with an embedding of the residue field at $y$ into a
separably closed field. If $f\colon Y\rightarrow X$ is a morphism of schemes, we write
$f(\cl{y})$ for the composition $f\circ\cl{y}$ --- the image of
$\cl{y}$ in $X$.

Now, if $\cl{x}\colon \Spec{\Omega}\rightarrow X$ is a geometric point of
$X$ and $f\colon Y\rightarrow X$ is an \'etale cover, the
\emph{geometric fibre} of $Y$ above $\cl{x}$ is the set
\[
F_\cl{x}(Y) \ceq \{\cl{y}\colon\Spec{\Omega}\rightarrow Y\tq f(\cl{y})= \cl{x}\}.
\]
\begin{example}\label{ex:geometricfibres}
\begin{enumerate}
\item A choice of geometric point for $\Spec{K}$ amounts to a choice
  of embedding of $K$ into the separably closed field $\Omega$. The
  \'etale covers of $\Spec{K}$ are given by inclusions of $K$ into
  finite \'etale $K$-algebras. The elements of the geometric fibre of
  any such cover is therefore the collection of embeddings of its component
  finite separable extensions into $\Omega$ that are consistent with the choice of
  embeddings of $K$ into $\Omega$ and the algebra.
\item A geometric point of $\GG_{m,K} = \Spec{K[x,y]/(xy-1)}$ is given
    by a choice of image for $x$ in $\Omega^\times$. With such a
    choice, the geometric fibre of a connected \'etale cover (as
    described in Example~\ref{ex:etalecovers}.\ref{eg:etaleoverGm})
    consists, when $K$ is algebraically closed, of points $\cl{y}$
    such that
    \[ 
\xymatrix{ K[x,y]/(xy-1)\ar[rr]^{\ \ \ \cl{y}} & & \Omega \\
K[x,y]/(xy-1) \ar[u]^{[d]} \ar[urr]_{x\mapsto\omega} & }
    \] commutes. Thus, $\cl{y}$ maps $x$ to a $d^\textrm{th}$ root of
    $\omega$. When $K$ is not algebraically closed, the covering space
    includes a choice of finite separable extension $F$ of $K$, and
    the points of the geometric fibre are a choice of $d^\textrm{th}$
    root of $\omega$ \emph{and} a choice of embedding of $F$ into
    $\Omega$ that respects the embedding of $K$ into both fields.
\end{enumerate}
\end{example}

Being a finite morphism, an \'etale cover necessarily has finite geometric
fibres. When the base scheme is connected, this finite cardinality
must be identical across all geometric points. Even more can be said when the covering
space $X$ is also connected.

For an \'etale cover $f\colon Y\rightarrow X$, write $\Aut_X(Y)$ for
the set of isomorphisms $\varphi$ of $Y$ such that
\[
\xymatrix{
Y \ar@{<->}[rr]^{\varphi} \ar[dr]_f & & Y \ar[dl]^f \\
& X & }
\]
commutes. When the covering map itself requires emphasis, we will use
the alternate notation $\Aut_X(f)$. The automorphism group acts on the geometric fibre of the
cover via
\[
 \varphi\cdot\cl{y} \ceq
\varphi (\cl{y}), 
\] 
and when $Y$ is connected, the fact that any section maps a
connected component of the base onto a connected component of the
cover implies that any $\varphi\in\Aut_X(Y)$ is actually
entirely determined by the image of $\cl{y}$. From that same fact it also
follows that the action of the automorphism group on the fibre has no
fixed points when $Y$ is connected\footnote{These are proved as
  Corollaries 5.3.3 and 5.3.4 in~\cite{szamuely}.}; thus, when
$X$ and $Y$ are both connected, the size of the automorphism group
is bounded by the size of the geometric fibre:
\[
|\Aut_X(Y)|\leq |F_\cl{x}(Y)|.
\]
When $|\Aut_X(Y)| =  |F_\cl{x}(Y)|$, $f$ is said to be a
\emph{Galois} \'etale cover, or simply a Galois cover. 
\begin{example}\label{ex:galoiscovers}
\begin{enumerate}
\item\label{eg:galoisoverSpecK} As in
  Example~\ref{ex:etalecovers}.\ref{eg:etaleoverspeck}, \'etale covers of the spectrum of a field $K$ are all by
  spectra of finite \'etale $K$-algebras. Such
  a cover is Galois when all the constituent extensions are
  Galois, and is connected exactly when it is composed of a single
  finite separable extension.
\item\label{eg:galoisoverGm} All of the connected \'etale covers of
  $\GG_{m,K}$ when $K$ is algebraically closed described in Example~\ref{ex:etalecovers}.\ref{eg:etaleoverGm},
     \begin{eqnarray*} [d]\colon K[x,y]/(xy-1) & \longrightarrow
      & K[x,y]/(xy-1) \\ x & \longmapsto & x^d \\ y & \longmapsto &
      y^{-d}, \end{eqnarray*} for any $d\in\ZZ$, are
    Galois. The automorphism groups of these covers are
\[
\Aut_{\GG_{m,K}}([d]) \iso \mu_{d,K}.
\] 
\par When $K$ is not algebraically closed the
    \'etale covers are \begin{eqnarray*} [d]_F\colon K[x,y]/(xy-1) & \longrightarrow
      & F[x,y]/(xy-1) \\ K & \hookrightarrow& F \\ x & \longmapsto & x^d \\ y & \longmapsto &
      y^{-d} \end{eqnarray*} for any integer $d$ and finite separable
    extension $K\hookrightarrow F$. In this case the covering group is
\[
\Aut_{\GG_{m,K}}([d]_F) \iso \mu_{d,F}\times\Gal{F/K};
\] therefore, the cover is Galois
    precisely when $F$ is a Galois extension containing all $d^\textrm{th}$
    roots of unity. 
\end{enumerate}
\end{example}

\subsection{The Fundamental Group}

Like the fundamental group familiar from topology, the \'etale
fundamental group is meant to classify the \'etale covers in the sense that the
automorphism groups of the covers appear as finite quotients of the
group, with the relationship between quotient groups reflecting a
relationship between the corresponding covers.

To allow this idea to lead us toward the definition, suppose we have an \'etale cover
$f\colon Y\rightarrow X$ and another connected \'etale cover $g\colon
Z\rightarrow X$ such that
\[
\xymatrix{
Y\ar[rr]^{h} \ar[dr]_{f} &  & Z \ar[dl]^{g} \\ & X & }
\]
commutes. Then, as one can find proved
in~\cite[Prop. 5.3.8]{szamuely}, the factorizing map $h$ is also an
\'etale cover exactly when there exists a normal subgroup $H$ of
$\Aut_X(Y)$ such that $\Aut_X(Z) = \Aut_X(Y)/H$.  So, just as in the
topological case, there is a bijection between \'etale covers
factoring $f$ and subgroups of the automorphism group. In the process
this also shows that every commuting triangle
\[
\xymatrix{
Y\ar[rr]^{h} \ar[dr]_{f} &  & Z \ar[dl]^{g} \\ & X & }
\]
of \'etale covers gives rise to a homomorphism
$\Aut_X(Y)\rightarrow\Aut_X(Z)$ that is, in fact, a quotient map. It
has a concrete description that we will make use of in later
chapters: For every $\varphi\in\Aut_X(Y)$
there is exactly one $\psi\in\Aut_X(Z)$ such that 
\begin{equation}\label{eqn:quotientdefn}
\psi\circ h =
h\circ\varphi;
\end{equation}
the quotient map is defined by $\varphi\mapsto\psi$.

All this invites imposing a partial order on $\operatorname{Fet}(X)$
according to the maps between covers 
and endowing it (along with the accompanying category of automorphism
groups) with the structure of a projective system. Without
base points this is impossible since any morphism of
covering spaces $Y\rightarrow Z$ need not be unique. But if each covering
space $Y$ of $X$ is endowed with a base point $\cl{y}\in
F_{\cl{x}}(Y)$, there is then at most one morphism between
covering spaces that also respects the base points. This provides
both the category of \'etale covers and the category
of the covering groups of \'etale covers with a partial order, giving them the structure of
projective systems, which allows the following definition.

\begin{definition}\label{def:fundamentalgroup}
The \emph{\'etale fundamental group} of a connected scheme $X$ with geometric
point $\cl{x}$ is 
\[
\algfg{X,\cl{x}} \ceq \invlim{Y}\Aut_X(Y)^\mathrm{op},
\]
where the limit is taken over all connected Galois covers of $X$. We
assume the finite groups appearing in the limit are discretely
topolgized, so that $\algfg{X,\cl{x}}$ carries the profinite topology.
\end{definition}

This definition is not actually the one originally given in
SGA~\cite[Exp.~V,\S7]{SGA1}. There, after much heaving of categorical
machinery into place, the fundamental group is defined as the
automorphism group of the \emph{fibre functor} associated to the
chosen geometric point $\cl{x}$,
\begin{align*}
F_{\cl{x}}\colon \Fet{X} & \longrightarrow \operatorname{Ensf} \\
Y & \longmapsto F_{\cl{x}}(Y).
\end{align*}
Our definition is equivalent, and ultimately more agreeable to work
with for being more concrete. The connection between the two
definitions can be seen by considering once again the topological
case, where (under the appropriate conditions) the analogous fibre
functor is represented by a single covering space, the universal
cover. The topological fundamental group is isomorphic to the
automorphism group of that space. Precisely the same thing would be
true in the \'etale case if the projective limit of the Galois covering spaces
of $X$ existed in $\Fet{X}$. It does not, and while one might remedy
this incongruity between the two situations by enlarging the category
from which covers of $X$ can be drawn, in the literature it is more
common let it be, defer taking limits until after passing to
automorphism groups (where the opposite groups are used to ensure that
the resulting group retains a left action on the fibres of the \'etale
covers of $X$) to obtain the \'etale fundamental group, and say
that instead of being representable the fibre functor is
\emph{pro-representable} in the \'etale context.

Viewing the fundamental group as the automorphism group of the fibre
functor does, however, make it easier to see certain key facts; most
importantly, that the fundamental group is functorial. Let $X$ and $Y$
be connected schemes with geometric points $\cl{x}$ and $\cl{y}$, and
$\alpha\colon X\rightarrow Y$ a scheme morphism such that $\alpha
(\cl{x}) = \cl{y}$. Base change by $\alpha$ gives a
  functor $\mathcal{B}_\alpha\colon\Fet{Y}\rightarrow\Fet{X}$, mapping
  any \'etale cover $U$ of $Y$ to $X\times_YU$, as in the Cartesian
  diagram
\[
\xymatrix{
X\times_YU\ar[d] \ar[r]^{\ \ \ \alpha^U} & U \ar[d] \\
X \ar[r]_{\alpha} & Y. }
\]
That $\alpha$ respects the geometric points means that $\alpha^U$
carries any $\cl{u}\in F_{\cl{x}}(X\times_YU)$ to a point in
$F_{\cl{y}}(U)$. Conversely, any point in
$F_{\cl{y}}(U)$ defines, via the universal property, a point in
$F_{\cl{x}}(X\times_YU)$. Therefore, $F_\cl{x} \circ
  \mathcal{B}_\alpha = F_\cl{y}$. So any automorphism of
  $F_\cl{x}$ will give rise to an automorphism of $F_\cl{y}$ via
  $\mathcal{B}_\alpha$, giving a map
\[
\pi_1(\alpha )\colon\algfg{X,\cl{x}}\longrightarrow\algfg{Y,\cl{y}}.
\]
This map will be the true workhorse of
Chapters~\ref{ch:somelocalsystems} and~\ref{ch:mainresult}, so we need
to characterize it explicitly. We can do this by returning to the
view of the fundamental group as an inverse limit.

Let $(\varphi_U)\in\algfg{X,\cl{x}}$, so that $U$ ranges over the
connected Galois covers of $X$. Then the component of
$\algfg{\alpha}((\varphi_U))$ indexed by a connected Galois cover $V$ of
$Y$ is the unique covering transformation $\psi_V$ of $V$ satisfying
\begin{equation}\label{eqn:functorialrelation}
f_V\circ\varphi_{X\times_YV} = \psi_V\circ f_V
\end{equation} 
as in the commuting diagram
\begin{equation}
\xymatrix{ X\times_YV \ar[drr] \ar[rr]^{\varphi_{X\times_YV}} & &
  X\times_YV \ar[d] \ar[rr]^{\alpha^V} & & V \ar[d]
  \ar[rr]^{\psi_V}_{\exists !} & & V
  \ar[dll] \\ & & X \ar[rr]_{\alpha} & & Y & &}
\end{equation}
Such a transformation must exist because $V$ is a Galois cover. 

As a final note, this functoriality property implies that if $\cl{x}$
and $\cl{x}^\prime$ are two geometric points of $X$, then
there is an isomorphism 
\[
\algfg{X,\cl{x}}\stackrel{\sim}{\longrightarrow}
\algfg{X,\cl{x}^\prime}
\]
that is unique up to an inner
automorphism\footnote{See~\cite[Cor. 5.5.2]{szamuely} and the
  subsequent remark for proof.}. The abelianization of
$\algfg{X,\cl{x}}$ is therefore independent of the base point chosen.

\begin{example}\label{ex:fundamentalgroups}\hfill
\begin{enumerate}
\item\label{example:fgofspeck}
  $\Spec{K}$. The previous round of examples described the connected \'etale
  covers of $\Spec{K}$ as the spectra of finite separable
  extensions of $K$. Thus,
\[
\algfg{\Spec{K},\cl{x}} = \invlim{L/K fin.sep.}\Gal{L/K} =
\Gal{K^\mathrm{sep}/K}.
\]
\item\label{example:fgofGm} $\GG_{m,K}$. Let $e\colon\Spec{K}\rightarrow\GG_{m,K}$ be the
  identity of $\GG_{m,K}$ and
  $\cl{e}\colon\Spec{\cl{K}}\rightarrow\Spec{K}\rightarrow\GG_{m,K}$
  and take this to be our geometric point. When $K$ is
  algebraically closed, by Example~\ref{ex:galoiscovers}, we have
\begin{eqnarray*}
\algfg{\GG_{m,K},\cl{e}} & = & \invlim{n\in\NN}\mu_n \\ &\approx&
\invlim{n\in\NN}\ZZ/n\ZZ \\  &=& \widehat{\ZZ}.
\end{eqnarray*}
When $K$ is not algebraically closed, we have instead,
\begin{eqnarray*}
\algfg{\GG_{m,K}} &=& \invlim{n\in\NN}\mu_n\times\Gal{K(\mu_n)/K} \\
&\approx&
\invlim{n\in\NN}\ZZ/n\ZZ\times\invlim{n\in\NN}\Gal{K(\mu_n)/K} \\
& = & \widehat{\ZZ}\times\Gal{(K^\mathrm{sep})^\mathrm{ab}/K}
\end{eqnarray*} 
\item\label{example:fgofA1} $\mathbb{A}^1_K$. From
  Example~\ref{ex:galoiscovers}.3, when $K$ is separably closed
\[
\algfg{\mathbb{A}^1_K,\cl{x}} = 0.
\]
\item $\Spec{\ZZ}$. Likewise,
\[
\algfg{\Spec{\ZZ},\cl{x}} = 0.
\]
\end{enumerate}
\end{example}

Of course, completely enumerating all connected Galois covers of an arbitrary
connected scheme in order to calculate its fundamental group is not a
trivial matter. Fortunately, it is sometimes possible to use knowledge
of topological fundamental groups of complex manifolds to determine
\'etale fundamental groups, courtesy of the next two propositions.

To any scheme $X$ of finite type over $\CC$ it is possible to
associate a complex analytic space $X^\mathrm{an}$ that, when $X$ is
smooth, is a complex manifold (for details, see SGA I~\cite[Expos\'e
XII]{SGA1}). The obvious question then is what relation the topological
fundamental group of $X^\mathrm{an}$ has to the \'etale fundamental
group of $X$. 

\begin{proposition}[SGA I~\cite{SGA1} Expos\'e XII, Cor. 5.2]\label{prop:topfgcomparison}
Let $X$ be a connected scheme of finite type over $\CC$ with
analytification $X^\mathrm{an}$. For every $\CC$-point $\cl{x}\colon
\Spec{\CC}\rightarrow X$ there is an isomorphism
\[
\widehat{\operatorname{\pi_1^\mathrm{top}}(X^\mathrm{an},\cl{x})}\stackrel{\sim}{\longrightarrow}
\algfg{X,\cl{x}},
\]
where the hat denotes the profinite completion of the topological
fundamental group of $X^\mathrm{an}$.
\end{proposition}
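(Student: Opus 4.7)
The plan is to reduce the comparison of fundamental groups to a comparison of the categories of finite étale covers on each side. Both $\pi_1^{\text{ét}}(X,\bar x)$ and $\widehat{\pi_1^{\text{top}}(X^{\text{an}},\bar x)}$ are defined (or can be characterized) as automorphism groups of fibre functors on categories of finite covers: finite étale covers of the scheme on one hand, finite topological covering spaces of $X^{\text{an}}$ on the other. If the analytification functor $Y \mapsto Y^{\text{an}}$ induces an equivalence between these categories that intertwines the two fibre functors at $\bar x$, then passing to automorphisms of the fibre functor yields the desired isomorphism. I would therefore structure the proof in three steps.

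First, I would construct the comparison functor. Given a finite étale cover $f\colon Y\to X$, analytification produces a morphism $f^{\text{an}}\colon Y^{\text{an}}\to X^{\text{an}}$ of complex analytic spaces that is finite and locally biholomorphic; in analytic language this is exactly a finite topological covering map. This defines a functor $\Fet{X}\to \operatorname{FinCov}(X^{\text{an}})$, and by construction $F_{\bar x}(Y) = (f^{\text{an}})^{-1}(\bar x)$, so the étale and topological fibre functors at $\bar x$ are naturally identified through it.

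Second, and this is the crux, I would invoke the Riemann Existence Theorem to show this functor is an equivalence of categories. Essential surjectivity is the hard direction: given a finite topological covering space $Z\to X^{\text{an}}$, one must equip $Z$ with an analytic structure (immediate, since the base is analytic and the projection is a local homeomorphism), and then algebraize it, producing a finite étale scheme-theoretic cover of $X$ whose analytification recovers $Z$. Algebraization relies on GAGA-type results; for smooth $X$ one may first handle the case where $X$ is a smooth quasi-projective variety by compactifying via a normal crossings divisor and using the Grauert–Remmert theorem, and reduce the general finite-type case to this by Chow's lemma together with descent. Full faithfulness is easier: two finite étale covers of $X$ with isomorphic analytifications can be identified because morphisms between finite étale $X$-schemes are themselves finite étale, so the graph of an analytic isomorphism algebraizes.

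Third, with the equivalence in hand, I would conclude by invoking the classification of finite covering spaces of a reasonable path-connected, locally contractible space (which $X^{\text{an}}$ is, being a complex analytic space of finite type): the category $\operatorname{FinCov}(X^{\text{an}})$ is equivalent, via its fibre functor at $\bar x$, to the category of finite continuous $\pi_1^{\text{top}}(X^{\text{an}},\bar x)$-sets, whose automorphism group of the forgetful fibre functor is exactly $\widehat{\pi_1^{\text{top}}(X^{\text{an}},\bar x)}$. Combined with Definition~\ref{def:fundamentalgroup} reinterpreted as $\Aut(F_{\bar x})$ on $\Fet{X}$, this gives the required isomorphism. The main obstacle, as indicated, is the algebraization step inside Riemann Existence; everything else is formal bookkeeping with fibre functors.
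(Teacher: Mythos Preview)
The paper does not actually prove this proposition; it is stated with a reference to SGA I, Expos\'e XII, Cor.~5.2, as one of the standard results quoted without proof in Chapter~\ref{ch:ladicsheaves}. Your outline is the standard argument one finds in SGA: analytification gives a functor $\Fet{X}\to\operatorname{FinCov}(X^{\mathrm{an}})$ compatible with fibre functors, the Riemann Existence Theorem shows it is an equivalence, and the identification of $\widehat{\pi_1^{\mathrm{top}}}$ with the automorphism group of the fibre functor on $\operatorname{FinCov}(X^{\mathrm{an}})$ finishes the job. So your approach is correct and matches the source the paper cites.

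One small caution on the third step: asserting that $X^{\mathrm{an}}$ is locally contractible is not entirely innocent when $X$ is singular. It is true, but it requires the triangulability of complex analytic spaces (\L ojasiewicz), which you should either cite or flag as a nontrivial input alongside Riemann Existence.
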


Combined with the following proposition, this allows us to use
knowledge of topological fundamental groups of complex manifolds to
determine the \'etale fundamental group of schemes over
$\cl{\QQ}_\ell$, since there exists a (nonunique) isomorphism
$\cl{\QQ}_\ell\cong \CC$.

\begin{proposition}[Szamuely, Prop. 5.6.7]\label{prop:algclosedcomparisonfg}
Let $K\hookrightarrow F$ be an extension of algebraically closed
fields, $X$ a proper integral scheme over $K$, $\cl{x}$ a
geometric point of $X$, and $X_F$ the extension of $X$ to $F$ with
corresponding geometric point $\cl{x}_F$. Then, the functorial
map
\[
\algfg{X_F,\cl{x}_F}\longrightarrow \algfg{X,\cl{x}}
\]
coming from the projection $X_F\rightarrow X$ is an isomorphism.
\end{proposition}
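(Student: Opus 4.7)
The plan is to prove the stronger statement that the base change functor $\mathcal{B}_\alpha\colon\Fet{X}\rightarrow\Fet{X_F}$, where $\alpha\colon X_F\to X$ is the projection, is itself an equivalence of categories. Granted this, the factorization $F_{\cl{x}_F}\circ\mathcal{B}_\alpha = F_{\cl{x}}$ recorded in the discussion preceding Definition~\ref{def:fundamentalgroup} identifies the two fibre functors, whence their automorphism groups---the respective fundamental groups---must coincide, and unwinding the definitions shows that the resulting isomorphism is precisely the functorial map under consideration.

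For full faithfulness of $\mathcal{B}_\alpha$, I would reduce to showing that for any two connected \'etale covers $Y_1, Y_2 \to X$, the set of $X$-morphisms $Y_1\to Y_2$ does not grow under base change from $K$ to $F$. Such morphisms correspond to sections of the projection $Y_1\times_XY_2\to Y_1$ taking values in the clopen locus where that projection is an isomorphism, so the question reduces to whether the set of connected components of a proper $X$-scheme is invariant under further extension of the algebraically closed base. This is precisely where properness of $X$ enters, via the standard fact that $H^0$ of a coherent sheaf on a proper $K$-scheme commutes with flat extension of the base field.

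For essential surjectivity, I would use noetherian approximation. Writing $F$ as the filtered colimit of its finitely generated $K$-subalgebras $R$, the scheme $X_F$ becomes the cofiltered limit of the schemes $X_R\ceq X\times_K\Spec{R}$; since any \'etale cover $Z\to X_F$ is of finite presentation, it descends to an \'etale cover $\tilde{Z}\to X_R$ for some such $R$. Because $K$ is algebraically closed and $R$ is finitely generated over $K$, Hilbert's Nullstellensatz supplies a $K$-valued point $s\colon\Spec{K}\to\Spec{R}$, and specializing at $s$ produces an \'etale cover $Y\ceq\tilde{Z}\times_{\Spec{R}}\Spec{K}\to X$. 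After shrinking $R$ so that $\Spec{R}$ is connected, the rigidity of \'etale covers in families---that their isomorphism class is locally constant on the base---ensures that the fibres of $\tilde{Z}$ over any two geometric points of $\Spec{R}$ agree up to isomorphism after base change; applying this to $s$ and to the map $\Spec{F}\to\Spec{R}$ gives $Y_F\iso Z$.

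The hard part will be this rigidity step in the essential surjectivity argument: spreading $Z$ out to $\tilde{Z}$ is routine, but proving that a $K$-specialization recovers $Z$ after base change back to $F$ requires genuine input about how \'etale covers behave in families, which in turn leans on properness of $X$ to prevent components from ``moving away'' as one varies the parameter in $\Spec{R}$. That rigidity itself ultimately reduces to full faithfulness applied fibrewise combined with properness, so the two halves of the argument are in the end tightly interwoven.
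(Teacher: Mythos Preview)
The paper does not give its own proof of this proposition: it is stated as a background result with a citation to Szamuely's \emph{Galois Groups and Fundamental Groups}, Proposition~5.6.7, in keeping with the paper's stated policy for Chapter~\ref{ch:ladicsheaves} of citing standard results rather than reproving them. So there is no in-paper argument to compare against.

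That said, your sketch is the standard proof, and it is the one Szamuely gives. The strategy of upgrading the statement to an equivalence $\Fet{X}\simeq\Fet{X_F}$, handling full faithfulness via invariance of connected components of proper $K$-schemes under extension of algebraically closed fields, and handling essential surjectivity by spreading out over a finitely generated $K$-subalgebra of $F$ and specializing at a Nullstellensatz point, is exactly the line taken in that reference. Your identification of the rigidity step as the crux is accurate; in Szamuely this is packaged as a specialization lemma comparing fibres of a finite \'etale cover over a connected base. Nothing in your outline is wrong, though as you note the rigidity input is where the real work hides and would need to be filled in carefully in a complete write-up.
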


\subsection{Finite Locally Constant Sheaves and the Fundamental Group}

The action of $\algfg{X,\cl{x}}$ on the geometric fibre of any
connected \'etale cover $Y$ comes from an action on the cover
$Y$ itself. Any element of $\gamma\in\algfg{X,\cl{x}}$ can be regarded as a
compatible tuple of covering transformations of \'etale covers of $X$.
The fundamental group acts on $F_{\cl{x}}(Y)$ through the covering
group's action on the fibre:
\[
\gamma\cdot\cl{y} \ceq \varphi_Y\cdot\cl{y} = \varphi_Y(\cl{y}) =
\varphi_Y\circ\cl{y}.
\]

This observation leads to the fundamental result in the theory of the
\'etale fundamental group, and the first step toward the equivalence
between local systems and representations of the fundamental
group:
Let $X$ be a connected scheme over a field $K$
with geometric point $\cl{x}$. The
fibre functor $F_\cl{x}$ defines an equivalence between the category
of \'etale covers of $X$ and the category
$\operatorname{Ensf}(\algfg{X,\cl{x}})$ of finite sets equipped with a continuous left 
action of $\algfg{X,\cl{x}}$. Connected covers correspond to sets with
transitive action, and Galois covers to finite quotients of $\algfg{X,\cl{x}}$.\footnote{A proof can be found in
  Szamuely~\cite[Thm. 5.4.2]{szamuely}.}
Here, `continuous' means with respect to the
profinite topology of the fundamental group. Combined with
Proposition~\ref{prop:coverlocallyconstantcorr} in the case of a
connected scheme $X$, we obtain an equivalence
between $\operatorname{Flc}(X)$ and
$\operatorname{Ensf}(\algfg{X,\cl{x}})$. The composite functor carries
a finite locally constant sheaf to its stalk at
$\cl{x}$. 

This fact can be refined by restricting attention to
the group objects in each of the categories, obtaining the next step
in the road toward the equivalence between local systems and
fundamental group respresentations.

\begin{proposition}\label{prop:locallyconstantsheafcorr}
Let $X$ be a connected scheme over a field $K$
and $\cl{x}\colon \Spec{\cl{K}}\rightarrow X$ a geometric point of
$X$. The functor 
\[
\mathcal{F} \longmapsto \mathcal{F}_{\cl{x}}
\]
defines an equivalence of categories
between finite locally constant sheaves of finite abelian
groups and finite abelian groups equipped with a continuous left action of
$\algfg{X,\cl{x}}$.
\end{proposition}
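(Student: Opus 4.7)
The plan is to obtain this as a corollary of the equivalence between $\operatorname{Flc}(X)$ and $\operatorname{Ensf}(\algfg{X,\cl{x}})$ recalled just above the statement, by restricting to abelian group objects on each side. First I would compose the equivalence of Proposition~\ref{prop:coverlocallyconstantcorr} with the fibre-functor equivalence $\Fet{X} \simeq \operatorname{Ensf}(\algfg{X,\cl{x}})$ to obtain an equivalence $\operatorname{Flc}(X) \simeq \operatorname{Ensf}(\algfg{X,\cl{x}})$. On objects this composite carries a finite locally constant sheaf $\mathcal{F}$, represented via $\mathcal{F} = \mathcal{F}_Y$ for some cover $f\colon Y \to X$, to the geometric fibre $F_{\cl{x}}(Y)$. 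One then identifies this fibre with the stalk $\mathcal{F}_{\cl{x}}$ using the local triviality of $f$: any sufficiently small \'etale neighborhood $U$ of $\cl{x}$ trivializes $Y \times_X U$ as a disjoint union of copies of $U$ indexed by $F_{\cl{x}}(Y)$, so $\mathcal{F}_Y(U) \iso F_{\cl{x}}(Y)$ for such $U$, and the colimit defining the stalk is eventually constant.

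Once the stalk functor is understood as effecting this equivalence, the proposition becomes a formal consequence of the observation that abelian group objects in each category are precisely the objects described in the statement. An abelian group object in $\operatorname{Flc}(X)$ is a sheaf $\mathcal{F}$ together with sheaf morphisms $\mathcal{F} \times \mathcal{F} \to \mathcal{F}$, $\mathcal{F} \to \mathcal{F}$, and $* \to \mathcal{F}$ satisfying the usual commutativity diagrams, which is exactly a finite locally constant sheaf of finite abelian groups. An abelian group object in $\operatorname{Ensf}(\algfg{X,\cl{x}})$ is a finite abelian group on which the group structure maps are $\algfg{X,\cl{x}}$-equivariant, equivalently a finite abelian group with a continuous left $\algfg{X,\cl{x}}$-action. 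Since any equivalence of categories preserves finite products and terminal objects, it sends abelian group objects to abelian group objects; applying this to the equivalence obtained in the first paragraph yields the desired statement, and since the stalk functor commutes with finite products, the induced equivalence on the abelian side is still $\mathcal{F} \mapsto \mathcal{F}_{\cl{x}}$.

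The main obstacle is not deep but does require care: confirming that the $\algfg{X,\cl{x}}$-action transported through the stalk identification agrees with the natural one coming from the fundamental group's action on fibres. An element $\gamma = (\varphi_U) \in \algfg{X,\cl{x}}$ acts on $F_{\cl{x}}(Y)$ through the component $\varphi_Y$ for any Galois cover trivializing $\mathcal{F}_Y$; because this action factors through the finite quotient $\Aut_X(Y)^\mathrm{op}$, continuity with respect to the profinite topology on $\algfg{X,\cl{x}}$ is automatic. Once this compatibility is in place, the remainder of the argument is pure category theory, since the abelian group structure is encoded entirely in finite diagrams preserved under any equivalence.
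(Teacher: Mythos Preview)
Your proposal is correct and follows essentially the same approach as the paper: the paper does not give a formal proof but simply states that the result is obtained by composing Proposition~\ref{prop:coverlocallyconstantcorr} with the fibre-functor equivalence $\Fet{X}\simeq\operatorname{Ensf}(\algfg{X,\cl{x}})$, noting the composite is the stalk functor, and then ``restricting attention to the group objects in each of the categories.'' Your write-up fills in exactly these steps, with the added care of checking compatibility of the $\algfg{X,\cl{x}}$-action and continuity, which the paper leaves implicit.
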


\subsection{The Homotopy Exact Sequence}\label{subsection:homexactseq}

Finally, we end this section by recalling a crucial exact sequence
associated to the \'etale fundamental group. It will allow us to easily define and keep track of Galois
actions on sheaves and local systems.

Let $X$ be a quasi-compact
  and geometrically integral scheme over a field $K$ with algebraic
  closure $\cl{K}$ and separable closure $K^\mathrm{sep}$. Let
  $\cl{X}$ denote the base extension of $X$ to
  $\Spec{K^\mathrm{sep}}$, and choose a geometric point
    $\cl{x}\colon\Spec{\cl{K}}\rightarrow \cl{X}$ for $\cl{X}$ and
    write $\cl{x}$ for its image in $X$ under the canonical
    projection. With all this, the sequence, called the \emph{homotopy exact
sequence},
\[
1\rightarrow \algfg{\cl{X},\cl{x}}\rightarrow
\algfg{X,\cl{x}}\rightarrow \Gal{K^\mathrm{sep}/K}\rightarrow 1
\]
of profinite groups, where the inner two maps are functorially induced
by the maps $\cl{X}\rightarrow X$ and $X\rightarrow\Spec{K}$, is
exact. On its own this provides a means of determining the fundamental
group of $\mathbb{A}^2_K$ where $K$ is not algebraically
closed. Since, for a separable algebraic closure $\cl{K}$ of $K$,
$\algfg{\mathbb{A}^2_{\cl{K}},\cl{x}} = 0$, it immediately follows
from the homotopy exact sequence that $\algfg{\mathbb{A}^2_K,\cl{x}} =
\Gal{\cl{K}/K}$.

If the scheme $X$ is defined over a field $K$ and has a $K$-rational
point $s$ such that
\[
\xymatrix{
& X \\
\Spec{K} \ar[ur]^s & \\
& \Spec{\cl{K}} \ar[ul] \ar[uu]_{\cl{x}} }
\]
commutes, the functorial map
$\algfg{s}\colon\Gal{K^\mathrm{sep}/K}\rightarrow X$ necessarily gives
a splitting of the homotopy exact sequence, allowing us to write
\[
\algfg{X,\cl{x}} \iso
\algfg{\cl{X},\cl{x}}\rtimes\Gal{K^\mathrm{sep}/K}.
\]
Using the identity $e$, this fact gives a different way of seeing that the fundamental group
of $\GG_{m,K}$ is as described in Example~\ref{ex:fundamentalgroups}.\ref{example:fgofGm}.

\section{$\ell$- and $\pi$-adic Sheaves}


This section is devoted to recalling the definition and basic facts
about the category of $\pi$-adic sheaves, which are fundamental to the
definition of $\cl{\QQ}_\ell$-local systems. Most important is the
equivalence between the category of $\cl{\QQ}_\ell$-local systems and
representations of the \'etale fundamental group. Whenever possible,
we will make use of this equivalence to avoid the cumbersome work of
dealing with the local systems themselves. 

The definitive source for $\ell$-adic sheaves remains Expos\'es V and
VI of SGA 5~\cite{SGA5}. Freitag and Kiehl's book~\cite{freitagkiehl} provides most of
the information here in greater depth for the case of $\ell$-adic
sheaves, which $\pi$-adic sheaves generalize in a straightforward
way. All of the results there remain valid for $\pi$-adic sheaves. The
book by Kiehl and Weissauer~\cite{kiehlweissauer} discusses the extension of some deeper
results to $\cl{\QQ}_\ell$-sheaves. Another good
source is section 1.4 of Conrad's notes, \emph{\'Etale Cohomology}~\cite{conradetale}.

A projective system of sheaves on the \'etale site of $X$ 
\[
\cdots\rightarrow \mathcal{F}_{n+1}\rightarrow\mathcal{F}_n\rightarrow
\mathcal{F}_{n-1}\rightarrow\cdots
\]
will be written $\mathcal{F} = (\mathcal{F}_n)_{n\in\ZZ}$ or just
$(\mathcal{F}_n)$,  and a morphism between two
systems $f\colon (\mathcal{F}_n)\rightarrow (\mathcal{G}_n)$
\[
\xymatrix{
\cdots \ar[r] & \mathcal{F}_{n+1} \ar[r] \ar[d]^{f_{n+1}} &
\mathcal{F}_n \ar[r] \ar[d]^{f_n} & \mathcal{F}_{n-1} \ar[r]
\ar[d]^{f_{n-1}} & \cdots \\
\cdots \ar[r] & \mathcal{G}_{n+1} \ar[r] & \mathcal{G}_n \ar[r] &
\mathcal{G}_{n-1} \ar[r] & \cdots }
\]
similarly as $f = (f_n)$ when the maps between individual components
need emphasis. Finally, let $E$ be a finite extension of
the $\ell$-adic numbers, $\QQ_\ell$, with valuation ring $\mathcal{O}_E$
and uniformizer $\pi$.

Throughout this section let $X$ be a Noetherian scheme on which the
prime $\ell$ is invertible (meaning that $\ell$ maps to an invertible
element in every ring of sections of the structure sheaf of $X$). We
make the Noetherian requirement because the definition of $\pi$-adic sheaves
uses constructibility, which is a simpler concept on Noetherian
schemes. An \'etale sheaf $\mathcal{F}$ on a Noetherian scheme $X$ is
\emph{constructible} if $X$ can be written as a finite union of
locally closed subschemes such that the restriction of $\mathcal{F}$
to each locally closed stratum is finite locally constant.

\subsection{$\pi$-adic Sheaves}

A \emph{$\pi$-adic sheaf} on $X$ is a projective system of \'etale
sheaves of $\mathcal{O}_E$-modules $(\mathcal{F}_n)_{n\in\ZZ}$ satisfying
\begin{itemize}
\item[(i)] $\mathcal{F}_n$ is constructible for all $n$,
\item[(ii)] $\mathcal{F}_n = 0$ for $n < 0$,
\item[(iii)] $\pi^{n+1}\mathcal{F}_n = 0$ for $n\geq 0$,
\item[(iv)]
  $\mathcal{F}_{n+1}\otimes_{(\mathcal{O}_E/\pi^{n+2}\mathcal{O}_E)}(\mathcal{O}_E/\pi^{n+1}\mathcal{O}_E)
  \iso \mathcal{F}_n$ for all $n\geq 0$.
\end{itemize}
When $\mathcal{O}_E = \ZZ_\ell$ and $\pi = \ell$, $(\mathcal{F}_n)$ is
called an $\ell$-adic sheaf. A $\pi$-adic sheaf is \emph{locally
  constant} if $\mathcal{F}_n$ is locally constant for
every $n$. If $\cl{x}$ is a geometric point of $X$, the \emph{stalk}
of $\mathcal{F} = (\mathcal{F}_n)$ at $\cl{x}$ is 
\[
\mathcal{F}_\cl{x}\ceq \invlim{n\in\NN}(\mathcal{F}_n)_\cl{x}.
\]

Despite the name, it is not true that there exists an etale covering
on which a locally constant $\pi$-adic sheaf becomes constant (\ie , all of the constituent sheaves
$\mathcal{F}_n$ become constant simultaneously) after restriction. An easy instance where this fails is the Tate twist
$\ZZ_\ell(1)$ described in Example~\ref{ex:piadicexamples}. Therefore, the term ``locally
constant $\pi$-adic sheaf'' is rarely used in the
literature. Unfortunately, the terminology that is in use is
confusingly varied. In SGA, locally constant $\ell$-adic sheaves are called
\emph{constant tordu}. Now they are more commonly called \emph{lisse}
or \emph{smooth}. We will call them smooth.

There is also the completely analogous concept of a \emph{$\pi$-adic system
of $\mathcal{O}_E$-modules} $M = (M_n)$ for which the defining properties are
\begin{itemize}
\item[(i)] $M_n$ is a module of finite length for all $n$,
\item[(ii)] $M_n = 0$ for $n < 0$,
\item[(iii)] $\pi^{n+1} M_n = 0$ for $n\geq 0$,
\item[(iv)]
  $M_{n+1}\otimes_{\mathcal{O}/\pi^{n+2}\mathcal{O}}(\mathcal{O}_E/\pi^{n+1}\mathcal{O}_E)
  \iso M_n$ for all $n\geq 0$.
\end{itemize}
Transporting entire $\pi$-adic sheaves using the equivalence in
Proposition~\ref{prop:locallyconstantsheafcorr} results in a $\pi$-adic
system of $\mathcal{O}_E$-modules.

\begin{example}\label{ex:piadicexamples}
\begin{itemize}
\item[(i)] The constant sheaf $\ell$-adic sheaf $(\ZZ_\ell )_X$ is
\[
\cdots\rightarrow (\ZZ/\ell^2\ZZ )_X\rightarrow (\ZZ/\ell\ZZ )_X \rightarrow 0_X.
\]
More generally, the constant sheaf $(\mathcal{O}_E)_X$ (not to be
confused with the structure sheaf $\mathcal{O}_X$) is similarly a $\pi$-adic sheaf.
\item[(ii)]\label{ex:tatetwist} Let $(\mu_{\ell^n})_X$ be the sheaf of $\ell^n$-th roots of
  unity on $X$ (that is, the kernel of the $\ell^n$ power map on
  $\mathcal{O}^*_X$, the sheaf of units of the structure sheaf of
  $X$). Then, the projective system of these sheaves,
  with $(\mu_{\ell^{n+1}})_X\rightarrow (\mu_{\ell^n})_X$ the $\ell$th
  power map, is an $\ell$-adic sheaf, the \emph{Tate twist} of the constant
  sheaf, $(\ZZ_\ell(1))_X$. More generally, the $m$th Tate twist,
  $(\ZZ_\ell(m))_X \ceq ((\mu_{\ell^n})_X^{\otimes m})$, is an
  $\ell$-adic sheaf.

The Tate twist provides an easy example of the ``failure of local
constancy'' noted above. Suppose $X =
\Spec{K}$, $K$ a field. Then, the sheaves $(\mu_{\ell^n})_X$ would
only become constant simultaneously over an \'etale neighbourhood
defined by a finite extension of $K$ containing all $\ell$-power roots
of unity. Of course, no such field exists. But each component sheaf
does become constant individually over an appropriately large cyclotomic
extension of $K$, so $(\ZZ_\ell(1))_X$ is a locally constant or, as we
prefer, smooth, $\pi$-adic sheaf.
\end{itemize}
\end{example}

\subsection{$E$-Local Systems}

Smooth $\pi$-adic sheaves give rise to local systems after
being embedded in the category of sheaves of $E$-vector spaces. To define those,
we need to introduce the Artin-Rees, or A-R, category first. This
category arises out of an effort to construct a category that carries
over the abelian category structures and operations (kernels,
cokernels, $\Hom$, and so on) of sheaves of
modules to the projective system category by applying them term by term. As
defined, this isn't possible in the category of $\pi$-adic sheaves itself,
since, for example, the projective system of kernels $(\kernel{f_n})$
of a morphism
\[
\xymatrix{
\cdots \ar[r] & \mathcal{F}_{n+1} \ar[r] \ar[d]^{f_{n+1}} &
\mathcal{F}_n \ar[r] \ar[d]^{f_n} & \mathcal{F}_{n-1} \ar[r]
\ar[d]^{f_{n-1}} & \cdots \\
\cdots \ar[r] & \mathcal{G}_{n+1} \ar[r] & \mathcal{G}_n \ar[r] &
\mathcal{G}_{n-1} \ar[r] & \cdots }
\]
does not necessarily satisfy condition (iii) of the definition. The expanded A-R category addresses
this\footnote{Conrad's notes~\cite{conradetale} are more expansive on this topic if the reader
is interested.}.

Write $\mathcal{F}[r]$ for the projective system $\mathcal{F} = (\mathcal{F}_n)_{n\in\NN}$ shifted $r$ degrees; \ie,
$\mathcal{F}[r] = (\mathcal{F}_{n+r})_{n\in\ZZ}$. The objects of the A-R category of 
sheaves are all projective systems $(\mathcal{F}_n)$ whose
component sheaves are $\pi$-torsion (\ie, every section is
annihilated by some power of $\pi$). The morphisms in
the category are given by
\[
\Hom_{A-R}(\mathcal{F},\mathcal{G})\ceq
\invlim{r}\Hom(\mathcal{F}[r],\mathcal{G}).
\]
Any projective system in the A-R category that is isomorphic to an $\pi$-adic
sheaf is called \emph{A-R $\pi$-adic}. With this, we can begin to
develop the notion of a local system.

The category of \emph{sheaves of $E$-vector spaces} on $X$ has the same
objects as the Artin-Rees category of $\pi$-adic sheaves, written
$\mathcal{F}\otimes E$ to distinguish them. For any two sheaves of
$E$-vector spaces $\mathcal{F}\otimes E$ and $\mathcal{G}\otimes E$,
\[
\Hom(\mathcal{F}\otimes E,\mathcal{G}\otimes E) \ceq
\Hom_{A-R}(\mathcal{F},\mathcal{G})\otimes_{\mathcal{O}_E} E.
\]

Local systems are special objects within this category. Specifically,
an \emph{$E$-local system} $\mathcal{L}$ on $X$ is a sheaf
of $E$-vector spaces isomorphic to $\mathcal{F}\otimes E$, where
$\mathcal{F}$ is a smooth $\pi$-adic sheaf. Smoothness means that if
$X$ is connected and
$\cl{x}$ and $\cl{x}^\prime$ are any two geometric points of $X$,
the stalks of $\mathcal{L}$ at $\cl{x}$ and $\cl{x}^\prime$ are
isomorphic. In particular, they are $E$-vector spaces of the same
dimension, and this common dimension is the \emph{rank} of
$\mathcal{L}$.


In the previous section we noted that there was an equivalence between
finite locally constant sheaves of abelian groups and finite abelian
groups with a continuous action of the \'etale fundamental group.
That equivalence can be extended to the objects now at hand. First,
by applying Proposition~\ref{prop:locallyconstantsheafcorr} to each
component sheaf, we see that a smooth $\pi$-adic sheaf is equivalent
to a $\pi$-adic system of $\mathcal{O}_E$-modules 
\[
\cdots\rightarrow M_{n+1}\rightarrow M_{n}\rightarrow
M_{n-1}\rightarrow\cdots ,
\]
where each $M_i$ is an $\mathcal{O}_E/\pi^i\mathcal{O}_E$-module
equipped with a continuous action of $\algfg{X,\cl{x}}$. 
In SGA 5 it is shown that the inverse limit
functor defines an equivalence between the category of $\pi$-adic modules and the
category of finitely generated $\mathcal{O}_E$-modules with continuous
fundamental group action. By composition, we obtain:

\begin{proposition}\label{prop:piadiccorrespondence}[SGA 5~\cite{SGA5}, Exp. VI,
    Lemme 1.2.4.2]
The category of smooth $\pi$-adic sheaves on a scheme $X$ is
equivalent to the category of finitely generated
$\mathcal{O}_E$-modules equipped with a continuous action of the \'etale fundamental
group of $X$.
\end{proposition}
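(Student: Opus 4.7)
The plan is to factor the equivalence through an intermediate category of $\pi$-adic systems of $\mathcal{O}_E$-modules equipped with compatible continuous $\algfg{X,\cl{x}}$-actions, leveraging the tools already in place. The first factor is obtained by applying Proposition~\ref{prop:locallyconstantsheafcorr} termwise to the components of a smooth $\pi$-adic sheaf; the second is passage to the inverse limit, which is the content of the cited SGA 5 lemma.

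For the first step, let $\mathcal{F} = (\mathcal{F}_n)$ be a smooth $\pi$-adic sheaf. Each $\mathcal{F}_n$ is a smooth constructible sheaf, hence finite locally constant, and by condition (iii) its sections naturally carry an $\mathcal{O}_E/\pi^{n+1}\mathcal{O}_E$-module structure. Applying Proposition~\ref{prop:locallyconstantsheafcorr} identifies each $\mathcal{F}_n$ with its stalk $M_n = (\mathcal{F}_n)_{\cl{x}}$, a finite $\mathcal{O}_E/\pi^{n+1}\mathcal{O}_E$-module equipped with a continuous left action of $\algfg{X,\cl{x}}$. The transition maps of the sheaf system become $\mathcal{O}_E$-linear and $\algfg{X,\cl{x}}$-equivariant module maps $M_{n+1}\to M_n$, and condition (iv) on sheaves transports to the analogous tensor compatibility on modules. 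This produces a $\pi$-adic system of $\mathcal{O}_E$-modules carrying a coherent $\algfg{X,\cl{x}}$-action. Morphisms match and the quasi-inverse comes along by functoriality of Proposition~\ref{prop:locallyconstantsheafcorr}.

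For the second step, I would invoke the SGA 5 identification: the inverse limit functor $(M_n)\mapsto \invlim{n}M_n$ defines an equivalence between $\pi$-adic systems of $\mathcal{O}_E$-modules and finitely generated $\mathcal{O}_E$-modules, with quasi-inverse $M\mapsto (M/\pi^{n+1}M)$. The compatible $\algfg{X,\cl{x}}$-actions on the $M_n$ assemble into an action on $M=\invlim{n}M_n$ that is continuous for the $\pi$-adic topology on $M$, which coincides with the inverse-limit topology coming from the discrete topologies on the finite quotients.

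The main obstacle I anticipate is the continuity-and-finiteness bookkeeping in the backward direction. Given a finitely generated $\mathcal{O}_E$-module $M$ with continuous $\algfg{X,\cl{x}}$-action, one must verify that each $M/\pi^{n+1}M$ is finite (immediate from finite generation together with finiteness of the residue field of $\mathcal{O}_E$), that the induced action on the discrete finite set $M/\pi^{n+1}M$ factors through a finite quotient of $\algfg{X,\cl{x}}$ so that Proposition~\ref{prop:locallyconstantsheafcorr} applies to produce the finite locally constant sheaf $\mathcal{F}_n$, and that the tensor condition (iv) is inherited from the natural isomorphism $(M/\pi^{n+2}M)\otimes_{\mathcal{O}_E/\pi^{n+2}\mathcal{O}_E}(\mathcal{O}_E/\pi^{n+1}\mathcal{O}_E)\iso M/\pi^{n+1}M$. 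The continuity-to-finite-factorization is the one genuine subtlety, but follows from compactness of $\algfg{X,\cl{x}}$ acting on a finite discrete target; one then needs only check that the reassembled system $(\mathcal{F}_n)$ satisfies all four defining conditions of a smooth $\pi$-adic sheaf, and that the two composite functors are naturally isomorphic to the identity, both of which reduce to termwise statements handled by Proposition~\ref{prop:locallyconstantsheafcorr}.
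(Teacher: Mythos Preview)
Your proposal is correct and follows exactly the approach the paper takes: apply Proposition~\ref{prop:locallyconstantsheafcorr} termwise to pass from smooth $\pi$-adic sheaves to $\pi$-adic systems of $\mathcal{O}_E$-modules with compatible $\algfg{X,\cl{x}}$-action, then invoke the SGA~5 result that the inverse limit functor is an equivalence onto finitely generated $\mathcal{O}_E$-modules with continuous action. In fact the paper gives no proof beyond that two-line sketch and a citation, so your treatment of the backward direction and the continuity bookkeeping is already more than the paper itself supplies.
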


Finally, the obvious functor from the category of $\pi$-adic sheaves
to the category of sheaves of $E$-vector spaces is mirrored in the
module category by the ``tensor with $E$'' functor
\begin{eqnarray*}
M &\longmapsto & M\otimes_{\mathcal{O}_E}E
\end{eqnarray*}
into the category of $E$ vector spaces, providing an equivalence
between the category of $E$-local systems on a connected scheme $X$ with
the category of $E$-valued representations of
$\algfg{X,\cl{x}}$\footnote{Conrad sketches a proof
  in~\cite[Thm. 1.4.5.4]{conradetale}}.  Under this equivalence, the
rank of a local system is equal to the dimension of the corresponding representation.



\subsection{$\cl{\QQ}_\ell$-Local Systems}

Finally, we will define the category of $\cl{\QQ}_\ell$-local
systems, which is built up as a direct limit of the categories of
$E$-local systems. To see how\footnote{Conrad~\cite{conradetale} takes
  a different approach to this construction; see Definition 1.4.5.6.}, note that for any finite extension of
local fields, $E\subseteq L$, with rings of integers $\mathcal{O}_E$
and $\mathcal{O}_L$ respectively, and uniformizers $\pi_E$ and
$\pi_L$ such that $\pi_L^e = \pi_E$, where $e$ is the ramification
index of the extension, there is a functor from the category of
$\pi_E$-adic sheaves of $X$ to that of $\pi_L$-adic sheaves on $X$
defined by
\[
\mathcal{F} = (\mathcal{F}_n) \longmapsto \cl{\mathcal{F}} =
(\cl{\mathcal{F}}_n),
\]
where 
\[
\cl{\mathcal{F}}_{ne}\ceq
\mathcal{F}_n\otimes_{(\mathcal{O}_E/\pi_E^{n+1}\mathcal{O}_E)}
\mathcal{O}_L/\pi_L^{(n+1)e}
\]
for all $n$ and for $1\leq i < e$,
\[
\cl{\mathcal{F}}_{ne-i}\ceq
\mathcal{F}_n\otimes_{(\mathcal{O}_E/\pi_E^{n+1}\mathcal{O}_E)}
\mathcal{O}_L/\pi_L^{(n+1)e-i}\mathcal{O}_L .
\]
This functor can be extended to one from the category of sheaves of
$E$-vector spaces on $X$ to the category of $L$-vector spaces by 
\[
\mathcal{F}\otimes E \longmapsto \cl{\mathcal{F}}\otimes L.
\]
From this, we get a directed system of categories, the limit of which
is the category of sheaves of $\cl{\QQ}_\ell$-vector spaces. Every
object $\mathcal{F}\otimes\cl{\QQ}_\ell$ in the category can be represented by a sheaf of $E$-vector
spaces,
\[
\mathcal{F}\otimes E,
\]
and if there exists such a representative that is an $E$-local system,
so that $\mathcal{F}$ is a smooth $\pi_E$-adic sheaf, 
then we say that $\mathcal{F}\otimes\cl{\QQ}_\ell$ is a
\emph{$\cl{\QQ}_\ell$-local system}. 

As one would expect, the equivalence for $E$-local systems extends to
$\cl{\QQ}_\ell$-local systems.

\begin{proposition}\label{prop:ladiclscorr}
The category of sheaves of $\cl{\QQ}_\ell$-local systems (or,
alternately, $\ell$-adic local systems) on a connected
scheme $X$ is equivalent
to the category of finite dimensional $\cl{\QQ}_\ell$ representations
of $\algfg{X,\cl{x}}$. The rank of a local system under this
equivalence os equal to the dimension of the corresponding
representation.
\end{proposition}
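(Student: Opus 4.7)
The plan is to bootstrap from the equivalence already discussed for $E$-local systems (essentially Proposition~\ref{prop:piadiccorrespondence} with the ``tensor with $E$'' functor applied) by taking a direct limit over the finite extensions $E/\QQ_\ell$ inside $\cl{\QQ}_\ell$. On the sheaf side, this limit is precisely how $\cl{\QQ}_\ell$-local systems were defined; on the representation side, it corresponds to enlarging the field of coefficients of the representation. The sought functor sends a $\cl{\QQ}_\ell$-local system $\mathcal{F}\otimes\cl{\QQ}_\ell$, represented by a smooth $\pi_E$-adic sheaf $\mathcal{F}$ for some finite $E/\QQ_\ell$, to the representation $(\invlim{n}(\mathcal{F}_n)_{\cl{x}})\otimes_{\mathcal{O}_E}\cl{\QQ}_\ell$, viewed as a continuous $\cl{\QQ}_\ell$-representation of $\algfg{X,\cl{x}}$. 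Independence of representative follows because the transition functor $\mathcal{F}\mapsto\cl{\mathcal{F}}$ described before the proposition becomes, after passing to stalks and tensoring, the identity base change $V\otimes_E L$.

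For full faithfulness, given two $\cl{\QQ}_\ell$-local systems $\mathcal{F}\otimes\cl{\QQ}_\ell$ and $\mathcal{G}\otimes\cl{\QQ}_\ell$, I would choose a finite extension $E/\QQ_\ell$ large enough that both are represented by smooth $\pi_E$-adic sheaves. By construction of the $\cl{\QQ}_\ell$-sheaf category as a direct limit, together with the definition of $\Hom$ on sheaves of $E$-vector spaces, one obtains
\[
\Hom(\mathcal{F}\otimes\cl{\QQ}_\ell,\mathcal{G}\otimes\cl{\QQ}_\ell)
\iso \Hom_{A\text{-}R}(\mathcal{F},\mathcal{G})\otimes_{\mathcal{O}_E}\cl{\QQ}_\ell .
\]
Applying the already-established $E$-version of the equivalence identifies the right-hand side with $\Hom_{\algfg{X,\cl{x}}}(V_{\mathcal{F}},V_{\mathcal{G}})\otimes_E\cl{\QQ}_\ell$, which is exactly the space of intertwiners on the representation side after coefficient extension.

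The main obstacle is essential surjectivity: one must verify that every continuous finite-dimensional $\cl{\QQ}_\ell$-representation $\rho\colon\algfg{X,\cl{x}}\to GL(V)$ is defined over some finite subextension $E\subseteq\cl{\QQ}_\ell$. Here I would use that $\algfg{X,\cl{x}}$ is profinite, hence compact, so $\rho(\algfg{X,\cl{x}})$ is a compact subgroup of $GL_n(\cl{\QQ}_\ell)$. A standard argument (using that the compact open subgroups $GL_n(\mathcal{O}_E)$ for $E/\QQ_\ell$ finite exhaust the compact subgroups of $GL_n(\cl{\QQ}_\ell)$ up to conjugation) shows the image lies in $GL_n(E)$ for some finite $E$, so $\rho$ is obtained by extension of scalars from an $E$-representation. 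Proposition~\ref{prop:piadiccorrespondence} then produces a smooth $\pi_E$-adic sheaf whose image under our functor is $\rho$.

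Finally, the numerical statement on rank is essentially bookkeeping: the rank of $\mathcal{F}\otimes\cl{\QQ}_\ell$ is by definition the $\cl{\QQ}_\ell$-dimension of the stalk, which equals $\dim_E(\mathcal{F}_{\cl{x}}\otimes_{\mathcal{O}_E}E)$, and this in turn equals the $E$-dimension of the underlying vector space of the representation assigned by Proposition~\ref{prop:piadiccorrespondence}, whose $\cl{\QQ}_\ell$-dimension is preserved under the final base change.
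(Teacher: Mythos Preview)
The paper does not actually supply a proof of this proposition: immediately after the statement it simply records ``Conrad's notes give a proof~\cite[Thm.~1.4.5.7]{conradetale}.'' This is consistent with the chapter's stated policy of citing standard results rather than reproving them. So there is nothing in the paper to compare your argument against line by line.

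That said, your outline is the standard argument and is essentially what one finds in Conrad. The reduction to the $E$-level equivalence together with the compactness lemma (every compact subgroup of $\GL{n}{\cl{\QQ}_\ell}$ is conjugate into $\GL{n}{\mathcal{O}_E}$ for some finite $E/\QQ_\ell$) is exactly the right mechanism for essential surjectivity, and your treatment of full faithfulness via the direct-limit description of $\Hom$ is correct. One small point worth making explicit: after landing in $\GL{n}{E}$ you still need to conjugate into $\GL{n}{\mathcal{O}_E}$ before invoking Proposition~\ref{prop:piadiccorrespondence}, since that proposition speaks of finitely generated $\mathcal{O}_E$-modules; this is immediate from compactness (the image stabilises an $\mathcal{O}_E$-lattice), but it is the step that connects your essential-surjectivity argument back to the $\pi$-adic side. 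With that caveat, your proposal is sound and goes well beyond what the paper itself provides.
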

Conrad's notes give a proof~\cite[Thm.~1.4.5.7]{conradetale}.

\subsection{Proper and Smooth Base Change}

Before closing this chapter, we state two important theorems for
reference. These are the base change theorems for
proper and smooth morphisms, originally proved in SGA for torsion
sheaves, which are sheaves of groups whose every group of sections is
a torsion group. Both theorems extend from that case to the case of $\pi$-adic sheaves
and $\cl{\QQ}_\ell$-local systems.

\begin{theorem}[Proper Base Change, SGA 4(III)~\cite{SGA4iii}, Expos\'e XII,
  5.1(i)]\label{thm:pbc}
Let $X$ and $Y$ be schemes and $f\colon X\rightarrow Y$ a proper
morphism. Let
\[
\xymatrix{
X\times_YW\ar[r]^{\ \ \ f^\prime} \ar[d]_{g^\prime} & W \ar[d]^{g} \\
X \ar[r]_{f} & Y }
\]
be a Cartesian diagram. Then, for every torsion sheaf (or $\pi$-adic
sheaf, or $\cl{\QQ}_\ell$-sheaf) $\mathcal{F}$ on $X$, the base change
isomorphism
\[
g^*(R^if_*\mathcal{F})\stackrel{\sim}{\longrightarrow}
R^if^\prime_*({g^\prime}^*\mathcal{F})
\]
is an isomorphism.
\end{theorem}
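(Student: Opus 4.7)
The plan is to prove this by reducing the global statement to a statement about stalks at geometric points, then reducing that to a classical ``Henselian base change'' assertion, and finally proving the Henselian case by d\'evissage and reduction to the projective-space case. Throughout, we may restrict to torsion sheaves; the extension to $\pi$-adic sheaves and $\cl{\QQ}_\ell$-sheaves follows by taking inverse limits once continuity and Mittag-Leffler type issues are controlled, and the tensoring-with-$E$ construction then covers the $\cl{\QQ}_\ell$ case.

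First, I would note that a morphism of \'etale sheaves is an isomorphism iff it is so on all geometric stalks, so it suffices to fix a geometric point $\cl{w}\colon\Spec{\Omega}\rightarrow W$, set $\cl{y}\ceq g(\cl{w})$, and check the map
\[
(g^*R^if_*\mathcal{F})_{\cl{w}}\longrightarrow (R^if^\prime_*{g^\prime}^*\mathcal{F})_{\cl{w}}
\]
is an isomorphism. By the general stalk formula for higher direct images, the left-hand side computes as $H^i(X\times_Y\Spec{\mathcal{O}_{Y,\cl{y}}^\mathrm{sh}},\mathcal{F})$ while the right-hand side computes as $H^i(X\times_Y\Spec{\mathcal{O}_{W,\cl{w}}^\mathrm{sh}},{g^\prime}^*\mathcal{F})$, where the superscript sh denotes strict Henselization. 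So the theorem is equivalent to the following ``Henselian'' form: if $S$ is the spectrum of a strictly Henselian local ring with closed point $s$, $f\colon X\rightarrow S$ is proper, and $\mathcal{F}$ is a torsion sheaf on $X$, then the restriction map
\[
H^i(X,\mathcal{F})\longrightarrow H^i(X_s,\mathcal{F}|_{X_s})
\]
is an isomorphism. Indeed, both strict Henselizations sit over $S\ceq\Spec{\mathcal{O}_{Y,\cl{y}}^\mathrm{sh}}$, and the closed fibre $X_s$ is the same in either total space, so applying the Henselian statement on both sides reduces everything to checking cohomology of a common closed fibre.

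Next, I would establish the Henselian statement by a layered d\'evissage. Because \'etale cohomology commutes with filtered colimits of torsion sheaves on quasi-compact quasi-separated schemes, I can reduce to the case where $\mathcal{F}$ is constructible. A standard filtration argument, combined with the five-lemma applied to the long exact cohomology sequences on both sides, then reduces further to the case $\mathcal{F} = (\ZZ/n\ZZ)_X$ (and by writing $n$ as a product of prime powers, to $\ZZ/\ell^r\ZZ$ with $\ell$ invertible on $S$). Chow's lemma lets me dominate the proper $X$ by a projective $S$-scheme, and Leray combined with the cohomological dimension of the exceptional fibres pushes me to the case where $f$ is projective; a closed immersion into some $\PP^N_S$ then reduces to the case $X = \PP^N_S$.

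The main obstacle, and where the real content of the theorem lives, is the final step: proving the isomorphism for $\PP^N_S$ with coefficients in $\ZZ/\ell^r\ZZ$. Here I would proceed by induction on $N$, starting from the $N=0$ case (which is tautological) and passing to $N=1$ via the Kummer sequence, the computation of the Picard group of the generic and special fibres, and the fact that $S$ is strictly Henselian (so $H^1_{\text{\'et}}(S,\mu_{\ell^r}) = 0$). The induction step from $N$ to $N+1$ uses the fibration $\PP^{N+1}\rightarrow\PP^N$ together with the already-proved $N=1$ case via a Leray spectral sequence argument. This last leg is the heart of the proof and is the part I would expect to consume the most effort; once it is in place, unwinding the d\'evissage and stalk identifications yields the theorem as stated, and extending by inverse limits delivers the $\pi$-adic and $\cl{\QQ}_\ell$ versions.
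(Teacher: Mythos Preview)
The paper does not supply a proof of this theorem at all: it is merely stated for reference, with a citation to SGA~4(III), Expos\'e~XII. So there is nothing to compare your proposal against on the paper's side.

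That said, your outline is essentially the SGA proof strategy (stalk reduction to the strictly Henselian local case, d\'evissage to constant coefficients, Chow's lemma to reduce to the projective case, and an explicit computation for $\PP^N_S$), so it is a correct sketch of how the cited reference establishes the result. One small caution: the step ``the closed fibre $X_s$ is the same in either total space'' is doing a bit more work than you indicate --- you really need to show that for any local homomorphism of strictly Henselian local rings $A\rightarrow B$ the restriction $H^i(X_A,\mathcal{F})\rightarrow H^i(X_B,\mathcal{F})$ is an isomorphism, and this follows by applying the Henselian statement twice and identifying both with the cohomology of the common closed fibre; you have this, but it could be said more carefully. Otherwise the plan is sound.
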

\begin{theorem}[Smooth Base Change, SGA 4(III)~\cite{SGA4iii}, Expos\'e
  XVI]\label{thm:sbc}
Suppose
\[
\xymatrix{
X\times_YW\ar[r]^{\ \ \ f^\prime} \ar[d]_{g^\prime} & W \ar[d]^{g} \\
X \ar[r]_{f} & Y }
\]
is a Cartesian diagram of Noetherian schemes. If $g$ is a smooth
morphism and $\mathcal{F}$ is a torsion sheaf ($\pi$-adic or $\cl{\QQ}_\ell$-sheaf
with torsion components) whose sections have order relatively prime
to the residue characteristic of $Y$, then the base change
homomorphism
\[
g^*(R^if_*\mathcal{F})\longrightarrow
R^if^\prime_*({g^\prime}^*\mathcal{F})
\]
is an isomorphism.
\end{theorem}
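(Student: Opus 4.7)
The plan is to establish the isomorphism through a sequence of standard reductions that turn the problem into a cohomological computation on affine space. Since both sides of the claimed isomorphism commute with filtered colimits and the statement is local on $Y$ and $W$ in the \'etale topology, I would first reduce to the case where $\mathcal{F}$ is a constant torsion sheaf $\Lambda = \ZZ/n\ZZ$ with $n$ invertible on $Y$. The general torsion case follows by d\'evissage using constructibility, and the extension to $\pi$-adic and $\cl{\QQ}_\ell$-sheaves is routine via passage to projective limits, using the finite generation built into the correspondence of Proposition~\ref{prop:piadiccorrespondence}.

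Next I would invoke the local structure of smooth morphisms: \'etale-locally on $W$, the morphism $g$ factors as an \'etale morphism followed by the canonical projection $\mathbb{A}^n_Y\to Y$. The \'etale case is essentially formal since both $g^*$ and $(g')^*$ commute with the relevant cohomological operations and \'etale morphisms are stable under base change, so the genuine content is concentrated in the case where $g$ is the projection $\mathbb{A}^1_Y\to Y$; iteration then handles all affine spaces.

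The heart of the argument is to establish the local acyclicity of smooth morphisms: for $g\colon W\to Y$ smooth, a geometric point $\cl{w}$ of $W$ mapping to $\cl{y}$, and any geometric point $\cl{t}$ of the strict Henselization $\Spec{\mathcal{O}_{Y,\cl{y}}^{\mathrm{sh}}}$, the natural map $\Lambda\to R\Gamma(W_{(\cl{w})}\times_{Y_{(\cl{y})}}\cl{t},\Lambda)$ should be an isomorphism whenever $\Lambda$ has torsion invertible on $Y$. Given this, one compares stalks of $g^*(R^if_*\mathcal{F})$ and $R^if'_*((g')^*\mathcal{F})$ at a geometric point $\cl{w}$ by relating them to cohomology of fibres over $\cl{y}$ of $f$ and $f'$ respectively, using proper base change (Theorem~\ref{thm:pbc}) together with a limit argument to identify the stalks concretely enough that local acyclicity produces the required identification.

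The main obstacle is proving the local acyclicity of smooth morphisms itself: this is a deep theorem occupying substantial portions of SGA~4, Expos\'es~XV--XVI, requiring cohomological purity and a careful analysis of the cohomology of strictly Henselian local rings of smooth schemes. Once this geometric input is in place, the remaining reductions and the passage to $\pi$-adic coefficients are formal. Since the theorem is stated here only for reference and cited from SGA, my proposal is essentially to adopt the SGA argument wholesale; a more streamlined modern treatment via Illusie's derived-category formulation of local acyclicity could also be used to the same effect.
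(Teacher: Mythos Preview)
The paper does not actually prove this theorem: it is stated purely for later reference, with the proof deferred entirely to the citation of SGA~4(III), Expos\'e~XVI. Your proposal correctly outlines the classical SGA strategy---reduction to constant coefficients, factorization of smooth morphisms through \'etale maps and affine space, and the key input of local acyclicity of smooth morphisms---so in spirit you are doing exactly what the paper does, namely invoking SGA; you have simply unpacked the reference rather than leaving it as a black box.
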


   \chapter{Galois Actions}\label{ch:galoisactions}

In SGA 7(II)~\cite{SGA7ii} Expos\'e XII, Deligne introduced the notion of a ``Galois
sheaf''\footnote{Deligne does not offically define any objects with
  this name however since, as noted below, the notion is slightly
  broader. In light of the actual use he puts them to, this
  seems a reasonable name.} on a scheme $X$ over a field $K$, and
proved that the category of Galois sheaves on the extension
$\cl{X}\ceq X\times_{\Spec{K}}\Spec{\cl{K}}$ to a separable closure
$\cl{K}$ of $K$ is equivalent to the category of sheaves on $X$. 
There are indications in the literature that the same kind of
correspondence exists more generally. In particular, the remark
following Proposition 5.1.2 in~\cite{BBD} states that it is true of
complexes belonging to the bounded derived category of $\ell$-adic
sheaves. However, no indication of a proof is given there, nor, it
seems, does one exist anywhere else. Since the results of
Chapter~\ref{ch:mainresult} are formulated around the existence of
this kind of correspondence, we give a proof here.

In this chapter we show that the sort of Galois action Deligne
describes can easily be extended to $\pi$-adic sheaves and
$\cl{\QQ}_\ell$-local systems. Less trivially, in the case of
$\cl{\QQ}_\ell$-local systems, it turns out that a
splitting of the homotopy exact sequence (as in Section~\ref{subsection:homexactseq})
allows us to naturally view the data of a Galois action in Deligne's
sense as a part of the representation of the \'etale
fundamental group equivalent to the local system. This gives a means
of extending the equivalence proved in SGA for torsion sheaves (see
Proposition~\ref{prop:delignegaloissheaf}) to a
class of $\cl{\QQ}_\ell$-local systems that includes those we define
in Chapter~\ref{ch:somelocalsystems}. And although the approach taken here may not
establish this fact in its fullest generality, it does allow us
to pursue our ongoing policy of working with fundamental group
representations rather than the $\cl{\QQ}_\ell$-local systems to which
they correspond.

We begin with a recollection of Deligne's original Galois
sheaf concept.

\section{Deligne's Galois Sheaves}

As mentioned, all of the material here first appeared in SGA 7(II)~\cite{SGA7ii}
Expos\'e XII, although the presentation here is
more detailed in certain respects so that this notion of a Galois
action on a sheaf can be clearly compared with the Galois action
implicitly contained in the representation of a local system on $X$ as a
representation of the fundamental group of $X$ when $X$ has a
$K$-rational point. A thorough review of the details
is also advisable because, despite the name, Deligne's notion of a
Galois action on a sheaf extends to any profinite group equipped with
a continuous homomorphism to the Galois group in question, and this
provides a crucial structural aspect of the nearby cycles functor,
which is vital to the results of Chapter~\ref{ch:mainresult}.

Let $X$ be a scheme over a field $K$ with separable closure $\cl{K}$,
and let $\cl{X}\ceq X\times_{\Spec{K}}\Spec{\cl{K}}$. To begin, we
allow $G$ to be a profinite group equipped with a continuous homomorphism
\[
\epsilon\colon G\longrightarrow \Gal{\cl{K}/K}.
\]
The Galois group naturally acts on $\cl{X}$ via its action on
$\cl{K}$, and we will denote the induced action of
$\gamma\in\Gal{\cl{K}/K}$ by $\gamma_X$. This is visualized in the
diagram
\[
\xymatrix{
\cl{X}\ar[d] \ar[r]^{\gamma_X}_\sim & \cl{X} \ar[d] \ar[r] & X \ar[d] \\
\Spec{\cl{K}}\ar[r]_{\Spec{\gamma}}^\sim & \Spec{\cl{K}}\ar[r] & \Spec{K} }
\]
where all squares in sight are Cartesian. The group $G$ therefore acts
on $\cl{X}$ via the homomorphism $\epsilon$.

\begin{definition}\label{def:compatibleaction}
Let $\mathcal{F}$ be an \'etale sheaf of sets on $\cl{X}$. An \emph{action
of $G$ on $\mathcal{F}$ compatible with the action of $G$ on $\cl{X}$}
is a system of isomorphisms, one for each $g\in G$,
\[
\mu (g)\colon\epsilon (g)_*\mathcal{F}\stackrel{\sim}{\longrightarrow}\mathcal{F}
\]
that satisfy, for any $h\in G$, $\mu (g\circ h) = \mu (g)\circ
\epsilon(g)_*(\mu (h))$.
\end{definition}
This idea takes the shape of a recognizable group action when we
consider how the $\mu (g)$ act on sections of $\mathcal{F}$ over an
\'etale scheme $\cl{U}$ arising via base change from an \'etale
scheme $U$ over $X$. In that case, the pullback of the diagram
\[
\xymatrix{
 & \cl{U}\ar[d] \\
\cl{X}\ar[r]_{\gamma_X} & \cl{X} }
\]
is again $\cl{U}$ and so
\[
(\mu (g))(\cl{U})\colon \epsilon (g)_*\mathcal{F}(\cl{U}) =
\mathcal{F}(\cl{U}) \stackrel{\sim}{\longrightarrow}
\mathcal{F}(\cl{U})
\]
is a permutation of the sections of $\mathcal{F}$ over $\cl{U}$.
\begin{definition}\label{def:ctsaction} 
A profinite group $G$ \emph{acts continuously} on $\mathcal{F}$ if,
for any $U$ quasicompact and \'etale over $X$, $G$ acts continuously
on the discrete set $\mathcal{F}(\cl{U})$.
\end{definition}

The fundamental example of such an action arises like so: If, instead
of $\cl{X}$, $\mathcal{F}$ is a sheaf of sets on $X$, and
$\cl{\mathcal{F}}$ is the inverse image of $\mathcal{F}$ under the
projection $\cl{X}\rightarrow X$, then $\cl{\mathcal{F}}$ naturally
carries an action of $\Gal{\cl{K}/K}$ that is compatible with the
action on $\cl{X}$. For every $\gamma\in\Gal{\cl{K}/K}$, we have the
Cartesian square
\[
\xymatrix{
\cl{U}\times_\cl{X}\cl{X} \ar[r]^{\ \ \ \sim} \ar[d] & \cl{U} \ar[d] \\
\cl{X} \ar[r]_{\gamma_X} & \cl{X}. }
\]
The isomorphism $\mu (\gamma )\colon (\gamma_X)_*\mathcal{F}(\cl{U})
\stackrel{\sim}{\rightarrow} \mathcal{F}(\cl{U})$ is simply the
restriction map $\cl{\mathcal{F}}(\cl{U}\times_{\cl{X}}\cl{X}\rightarrow\cl{U})$.
This action is continuous in the sense above, which can be
proved by considering the situation at the finite level: let $L$ be a
finite extension of $K$ and write $X_L$ for the base extension of $X$
to $L$ and $\mathcal{F}_L$ for the inverse image of $\mathcal{F}$
under $X_L\rightarrow X$. Then $\cl{\mathcal{F}} =
\invlim{L}\mathcal{F}_L$ and $\Gal{\cl{K}/K}$ acts on $\mathcal{F}_L$,
through the finite quotient $\Gal{L/K}$, so that the action is
necessarily continuous. Since the morphisms giving the action on
$\cl{\mathcal{F}}$ are inverse limits of the morphisms at the finite
level, it is also continuous.

There is therefore a functor from sheaves on $X$ to Galois sheaves on
$\cl{X}$, which is in fact one half of an equivalence, as Deligne proved.

\begin{proposition}\label{prop:delignegaloissheaf}[SGA 7(II)~\cite{SGA7ii} Expos\'e
  XII, Rappel 1.1.3(i)]
The category of sheaves of sets on the \'etale site of $X$ is
equivalent to the category of sheaves on $\cl{X}$ equipped with a
continuous action of $\Gal{\cl{K}/K}$ compatible with the action on
$\cl{X}$. 
\end{proposition}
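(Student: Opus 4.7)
The plan is to prove the equivalence by constructing explicit quasi-inverse functors, one of which has already been described in the excerpt, and then verifying the reconstruction on both sides via faithfully flat (Galois) descent applied level-by-level along the tower of finite Galois subextensions of $\cl{K}/K$.

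First, I take as one functor $\Phi$ the one already constructed above, sending a sheaf $\mathcal{F}$ on $X$ to its pullback $\cl{\mathcal{F}}$ on $\cl{X}$, equipped with the canonical compatible continuous $\Gal{\cl{K}/K}$-action whose existence and continuity were verified in the discussion preceding Definition~\ref{def:ctsaction}. The content of the proposition is that this functor is an equivalence, so I need to build a quasi-inverse $\Psi$. Given a Galois sheaf $(\mathcal{G},\mu)$ on $\cl{X}$, I define, for each $U$ étale over $X$,
\[
(\Psi\mathcal{G})(U) \ceq \mathcal{G}(\cl{U})^{\Gal{\cl{K}/K}},
\]
where $\cl{U} = U\times_X\cl{X}$ and the action is the one induced by $\mu$ on sections over the pullback $\cl{U}$ (which is preserved by the $\Gal{\cl{K}/K}$-action on $\cl{X}$ because $\cl{U}$ is base-changed from $X$). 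Restriction along étale morphisms respects Galois invariants, and since sheafification commutes with the exact functor of taking invariants by a profinite group acting continuously on discrete sets, $\Psi\mathcal{G}$ is indeed an étale sheaf on $X$.

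The core of the argument is then to show $\Psi\Phi\iso\id$ and $\Phi\Psi\iso\id$, and the natural route is to reduce to the finite Galois level. For a finite Galois extension $L/K$ with group $\Gamma = \Gal{L/K}$, the morphism $X_L\to X$ is a Galois étale cover (after suitable localization), and classical faithfully flat descent identifies étale sheaves on $X$ with étale sheaves on $X_L$ carrying a compatible $\Gamma$-action. Every quasi-compact étale $\cl{U}\to\cl{X}$ is the base change of some quasi-compact étale $U_L\to X_L$ for a sufficiently large finite $L$, so the étale site of $\cl{X}$ is the $2$-colimit of the étale sites of the $X_L$. Given a Galois sheaf on $\cl{X}$ with continuous compatible $\Gal{\cl{K}/K}$-action, the continuity hypothesis (Definition~\ref{def:ctsaction}) lets me present each $\mathcal{G}(\cl{U})$ as a filtered union of $\Gal{\cl{K}/L}$-fixed parts, i.e. as sections of a finite-level sheaf $\mathcal{G}_L$ on $X_L$ with a compatible $\Gamma$-action, and then descend $\mathcal{G}_L$ to $X$ by finite Galois descent. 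Compatibility of these descents as $L$ grows gives a sheaf on $X$ canonically isomorphic to $\Psi\mathcal{G}$, and the two unit/counit isomorphisms follow from the uniqueness in faithfully flat descent at each finite stage.

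The main obstacle will be the bookkeeping required to pass cleanly from the finite Galois level to the separable closure while preserving both the sheaf structure and the compatibility of the action. Concretely, I must check that (i) any étale $U\to X$ and any section $s\in\mathcal{G}(\cl{U})$ are ``defined over'' some finite $L$, so that invariants under $\Gal{\cl{K}/K}$ reduce to invariants under some $\Gal{L/K}$; (ii) the resulting finite-level descent data are compatible under the inclusion maps among finite extensions, so they glue to honest data over the full separable closure; and (iii) the sheaf condition on $\Psi\mathcal{G}$ follows from the sheaf condition on $\mathcal{G}$ together with exactness of continuous profinite invariants. Granting these three points—each a standard but technical verification—the functors $\Phi$ and $\Psi$ are mutually quasi-inverse, yielding the claimed equivalence.
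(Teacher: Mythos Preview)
Your proposal is correct and follows essentially the same approach as the paper: the forward functor is pullback with its natural Galois action, and the quasi-inverse is taking Galois invariants of the pushforward $\pi_*\cl{\mathcal{F}}$. The paper in fact gives no proof beyond describing this quasi-inverse and citing SGA~7(II), Expos\'e~XII, Rappel~1.1.3(i); your sketch of the verification via reduction to finite Galois descent is more than the paper itself provides.
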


The quasi-inverse of the functor described above is given, predictably, by invariants. Let $\pi$ be the
projection $\cl{X}\rightarrow X$. The pushforward
$\pi_*\cl{\mathcal{F}}$ inherits an action of $\Gal{\cl{K}/K}$ from
$\cl{\mathcal{F}}$ that is a straightforward action on the set of
sections over every \'etale scheme over $X$, as described
above. Therefore, it makes sense to take invariants under this action,
and we define
\[
\cl{\mathcal{F}}^{\Gal{\cl{K}/K}} \ceq
\left(\pi_*\cl{\mathcal{F}}\right)^{\Gal{\cl{K}/K}}.
\]

\section{Galois Actions on $\pi$-adic Sheaves and Local Systems}\label{section:systemactions}

Definition~\ref{def:compatibleaction} can be naturally extended to the
category of $\pi$-adic sheaves by taking the morphisms $\mu (g)$ to be
morphisms in that category, with the pushforward by $\epsilon (g)$
taken componentwise:
\[
\xymatrix{
\cdots \ar[r] & \epsilon (g)_*\mathcal{F}_{n+1} \ar[r] \ar[d]^{{\mu (g)}_{n+1}} &
\epsilon (g)_*\mathcal{F}_n \ar[r] \ar[d]^{{\mu (g)}_n} & \epsilon (g)_*\mathcal{F}_{n-1} \ar[r]
\ar[d]^{{\mu (g)}_{n-1}} & \cdots \\
\cdots \ar[r] & \mathcal{F}_{n+1} \ar[r] & \mathcal{F}_n \ar[r] &
\mathcal{F}_{n-1} \ar[r] & \cdots }.
\]
From there, sheaves of $E$-vector spaces and $\cl{\QQ}_\ell$-sheaves
can also be endowed with profinite group actions. The notion of a
continuous action also extends to these categories by demanding that
${\mu (g)}_n$ satisfy Definition~\ref{def:ctsaction} for all $n$. 


Our goal now is to prove a version of
Proposition~\ref{prop:delignegaloissheaf} for $\cl{\QQ}_\ell$-local
systems on a scheme $X$ over a field $K$ with a $K$-rational point. By
Proposition~\ref{prop:ladiclscorr}, we can represent a rank 1 $\cl{\QQ}_\ell$-local system with an
$\ell$-adic character of the \'etale fundamental group of $X$. The
claim is that the homotopy exact sequence recalled in Section~\ref{subsection:homexactseq} 
in fact presents us with Deligne's equivalence for these local
systems on $X$.

Let $e\colon\Spec{K}\rightarrow X$ be a $K$-rational point of $X$, and
$\cl{e}$ the geometric point corresponding to a choice of embedding
$K\hookrightarrow \cl{K}$. Then, as described in
Section~\ref{subsection:homexactseq},
the functorial map 
\[
\algfg{e}\colon \algfg{\Spec{K},\cl{e}} =
\Gal{\cl{K}/K}\rightarrow \algfg{X,\cl{e}}
\]
necessarily splits the homotopy exact sequence
\[
1\rightarrow\algfg{\cl{X},\cl{e}}\rightarrow\algfg{X,\cl{e}}\rightarrow\Gal{\cl{K}/K}\rightarrow
1,
\]
which allows us to write $\algfg{X,\cl{e}} =
\algfg{\cl{X},\cl{e}}\rtimes\Gal{\cl{K}/K}$. Because our attention is
restricted to rank 1 $\cl{\QQ}_\ell$-local systems and thus to $\ell$-adic
characters of $\algfg{X,\cl{e}}$, which must factor through the
respective abelianizations of the groups involved, we can consider the
exact sequence
\[
1\rightarrow\algfg{\cl{X},\cl{e}}_{\Gal{\cl{K}/K}}\rightarrow
\algfg{\cl{X},\cl{e}}_{\Gal{\cl{K}/K}}\times \Gal{K^\mathrm{ab}/K}
\rightarrow \Gal{K^\mathrm{ab}/K}\rightarrow 1.
\]
associated to the abelianization of $\algfg{\cl{X},\cl{e}}\rtimes\Gal{\cl{K}/K}$.
This information alone tells us something very similar to
Proposition~\ref{prop:delignegaloissheaf}---that every rank 1 $\ell$-adic
local system on $X$ is equivalent to a rank 1 local system on
$\cl{X}$ together with a Galois action on
$\cl{\QQ}_\ell$, which is isomorphic to the stalks of the local system
on $\cl{X}$. It is also isomorphic to the stalks of the local system
on $X$ as well, and what makes the claim less than immediate is the fact
that the action indicated is in fact on the stalks of the local system
there, not on $\cl{X}$. To prove the claim, we need to show that this
Galois action can be translated to one as in
Definition~\ref{def:compatibleaction}.


Let $\rho\colon\algfg{X,\cl{x}}\rightarrow\cl{\QQ}_\ell^\times$ be a
character and $\mathcal{F}$ a $\cl{\QQ}_\ell$-local system equivalent
to $\rho$ under the correspondence of
Proposition~\ref{prop:ladiclscorr}. We may assume that
$\mathcal{F}$ is itself represented by an $E$-local system
$\mathcal{F}\otimes E$, where $\mathcal{F}$ is a smooth $\pi_E$-adic
sheaf and $E$ is a finite extension of $K$. Each component torsion
sheaf $\mathcal{F}_n$ of $\mathcal{F}$ may be taken to be the sheaf of
sections of some Galois cover of $X$ by the equivalence of finite
locally constant sheaves and \'etale covers.

It is sufficient to prove the claim in the case of smooth $\pi$-adic sheaves to
establish it for $E$-local systems since the latter category is just a
quotient of the former and because the Galois action on an $E$-local
system is defined through the $\pi$-adic sheaf giving rise to
it. Likewise, the claim follows for $\cl{\QQ}_\ell$-local systems from the
case of $E$-local systems. 

We can make one further reduction and prove the claim for each
constituent sheaf $\mathcal{F}_n$, since the fundamental group action
is defined for each individually, arising from the action of the
covering group on the Galois cover of $X$ corresponding to
$\mathcal{F}_n$ under Proposition~\ref{prop:coverlocallyconstantcorr}.
We are left with the following statement.
\begin{lemma}\label{lemma:galoisactionexchange}
Let $F_n$ be a finite locally constant sheaf on a scheme $X$, with a
$K$-rational point $e$, equivalent to a
character 
\[
\rho_n\colon \algfg{X,\cl{e}} = \algfg{\cl{X},\cl{e}}_{\Gal{\cl{E}/E}}
\times \Gal{E^{\textrm{ab}}/E} \rightarrow \cl{\QQ}_\ell^\times .
\]
Then, the natural Galois action on $\cl{\mathcal{F}}_n$ (in the sense of
Definiton~\ref{def:compatibleaction}) arising from base change to
$\cl{X}$ is equal to that defined by the action of $\algfg{X,\cl{x}}$
on $\mathcal{F}_n$ and the inclusion
\[
\Gal{\cl{E}/E} \stackrel{\pi_1(e)}{\rightarrow} \algfg{\cl{X},\cl{e}}
\rtimes \Gal{\cl{E}/E}.
\]
\end{lemma}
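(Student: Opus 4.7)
My plan is to reduce the statement via Proposition~\ref{prop:coverlocallyconstantcorr} to a comparison of two actions on a single étale cover, and then verify the agreement by tracking what each action does to the geometric fibre over the rational point $e$. Since $\rho_n$ factors through a finite quotient of $\algfg{X,\cl{e}}$, I may assume that $\mathcal{F}_n$ is the sheaf of sections of a connected Galois cover $f\colon Y\to X$ with covering group a quotient of $\algfg{X,\cl{e}}$; then $\cl{\mathcal{F}}_n$ is the sheaf of sections of the base change $\cl{f}\colon\cl{Y}\to\cl{X}$. Both Galois actions to be compared are determined by their effect on $\cl{Y}$ (the natural one through isomorphisms of $\cl{Y}$ covering the action of $\Gal{\cl{K}/K}$ on $\cl{X}$, the $\pi_1(e)$-one through $X$-automorphisms of $Y$ pulled back to $\cl{Y}$), and by Definition~\ref{def:ctsaction} together with the fact that $\Aut_X(Y)$ acts freely on the geometric fibre $F_{\cl{e}}(Y)$, it suffices to check agreement on the stalk at $\cl{e}$.

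Next I would compute the natural base-change action explicitly. For $\gamma\in\Gal{\cl{K}/K}$, the automorphism $\gamma_X$ of $\cl{X}$ lifts canonically, via the Cartesian square defining $\cl{Y}=Y\times_X\cl{X}$, to an $X$-morphism $\gamma_Y\colon\cl{Y}\to\cl{Y}$ over $\gamma_X$, and $\mu(\gamma)$ is precisely the permutation of sections induced by $\gamma_Y$. Using the factorization of $\cl{e}$ through the $K$-point $e$, I would identify $F_{\cl{e}}(\cl{Y})$ with $F_{\cl{e}}(Y_e)$, where $Y_e\ceq\Spec{K}\times_X Y$ is the étale cover of $\Spec{K}$ obtained by pulling back $Y$ along $e$. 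Under this identification, $\mu(\gamma)$ becomes the tautological Galois action on the $\cl{K}$-points of the étale $K$-algebra $Y_e$.

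Third, I would compute the $\pi_1(e)$-action. By the explicit description of the functorial map recalled in (\ref{eqn:functorialrelation}), the image $\algfg{e}(\gamma)$ has $Y$-component equal to the unique $\psi_Y\in\Aut_X(Y)$ such that $\psi_Y\circ e^Y = e^Y\circ\gamma_{Y_e}$, where $e^Y\colon Y_e\to Y$ is the projection from the Cartesian square and $\gamma_{Y_e}$ is the natural action of $\gamma$ on the $K$-scheme $Y_e$. Any lift $\tilde{e}\colon\Spec{\cl{K}}\to Y$ of $\cl{e}$ factors uniquely through $e^Y$ as $\tilde{e}=e^Y\circ\tilde{e}_e$, so
\[
\algfg{e}(\gamma)\cdot\tilde{e}\; =\; \psi_Y\circ\tilde{e}\; =\; \psi_Y\circ e^Y\circ\tilde{e}_e\; =\; e^Y\circ\gamma_{Y_e}\circ\tilde{e}_e,
\]
which is again the tautological Galois action on the $\cl{K}$-points of $Y_e$, transported to $F_{\cl{e}}(Y)$ via $e^Y$. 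The two actions therefore agree on the stalk at $\cl{e}$.

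The main obstacle, as I see it, is purely one of bookkeeping: one must faithfully track the identifications between $F_{\cl{e}}(Y)$, $F_{\cl{e}}(\cl{Y})$, and $F_{\cl{e}}(Y_e)$, verify that the various lifts, projections, and base-change morphisms fit into the expected commutative diagrams, and then propagate the stalk-wise equality first to arbitrary sections by continuity, and afterwards to the full compatibility condition of Definition~\ref{def:compatibleaction} (so that the cocycle identity $\mu(gh)=\mu(g)\circ\epsilon(g)_*\mu(h)$ follows from the group law in $\algfg{X,\cl{e}}\iso\algfg{\cl{X},\cl{e}}\rtimes\Gal{\cl{K}/K}$). Conceptually each step is routine; the care lies in lining up all the Cartesian squares without conflating a geometric point with its image under a covering transformation.
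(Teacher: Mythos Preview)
Your proposal is correct and shares the paper's overall strategy: invoke Proposition~\ref{prop:coverlocallyconstantcorr} to replace $\mathcal{F}_n$ by a connected Galois cover $Y\to X$, then explicitly identify both Galois actions and verify that they agree. The difference is in where the comparison is carried out. The paper works with sections over an arbitrary \'etale open $U_{E_n}$ (base-changed to a finite level), computes $\mu(\gamma)(U)(\sigma)=\gamma_{U\cap Y_n}\circ\sigma\circ\gamma_U^{-1}$ for the natural action, and then checks agreement with the $\pi_1(e)$-action by projecting to the two factors of a large composite Cartesian diagram. You instead reduce to the single stalk at $\cl{e}$, identify both actions with the tautological Galois action on the $\cl{K}$-points of the pullback $Y_e=\Spec{K}\times_X Y$, and invoke~\eqref{eqn:functorialrelation} for the $\pi_1(e)$-side.

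Your route is more economical and arguably more conceptual; the paper's route is more explicit and self-contained. One point to tighten: your justification for reducing to a single stalk (``$\Aut_X(Y)$ acts freely on $F_{\cl{e}}(Y)$'') is not quite the right reason, since the two actions are not both covering transformations of $Y$. What you actually need is that $\mu_1(\gamma)\circ\mu_2(\gamma)^{-1}$ is an automorphism of the locally constant sheaf $\cl{\mathcal{F}}_n$ on the connected scheme $\cl{X}$, and such an automorphism is determined by its effect on any one stalk (via the equivalence of Proposition~\ref{prop:locallyconstantsheafcorr}). With that adjustment, your bookkeeping plan is sound.
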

\begin{proof}
We begin by calling into service the
equivalence of Proposition~\ref{prop:coverlocallyconstantcorr}, which
furnishes us with an \'etale cover $Y_n$ whose sheaf of sections is
the finite locally constant sheaf $\mathcal{F}_n$.

First, recall the action of the Galois group induced by the action
of the fundamental group. The splitting of the fundamental group
\[
\algfg{X,\cl{e}} = \algfg{\cl{X},\cl{e}} \rtimes \Gal{\cl{E}/E}
\]
comes from the functorial map
\[
\algfg{e}\colon \Gal{\cl{E}/E}\longrightarrow \algfg{X,\cl{e}}
\]
where $e$ is the $K$-rational point of $X$. Let $E_n$ be the finite Galois
extension of $E$ defined by the Cartesian square
\[
\xymatrix{
\Spec{E_n} \ar[rr]^{e_n} \ar[d] & & Y_n  \ar[d] \\
\Spec{E} \ar[rr]_{e} & & X }
\]
Given $\gamma\in\Gal{\cl{E}/E}$,  $\pi_1(e)$ maps $\gamma$ to the
unique covering transformation $\varphi_n$ of $Y_n$ such
that $e_n\circ\Spec{\gamma} = \varphi_n\circ e_n$. Let $U$ be
\'etale over $X$. Then, $\varphi_n$ induces a transformation of the cover
$U\times_XY_n\rightarrow U$ that we will denote $\varphi_n^U$. If
$\sigma\colon U\rightarrow U\times_XY_n$ is a section of the cover, so is $\varphi_N^U\circ\sigma$.
Therefore $\sigma\mapsto \varphi_n^U\circ\sigma$ defines a permutation
of the sections of $\mathcal{F}_n$ over $U$. The action of the Galois
group coming from the fundamental group action is defined through this action:
\begin{equation}\label{eqn:fgGaloisaction}
\gamma\cdot\sigma \ceq \varphi_n^U\cdot\sigma = \varphi_n^U\circ\sigma.
\end{equation}

Next we consider the natural Galois sheaf structure obtained by base change.
As explained in the discussion preceding
Proposition~\ref{prop:delignegaloissheaf}, the isomorphisms giving this action are
inverse limits of isomorphisms obtained by base changing to finite
Galois extensions of $E$. For the sake of proper comparison, we retain
the finite Galois extension $E_n$ from above. Again, let
$\gamma\in\Gal{\cl{E}/E}$ and abusively write $\gamma$ for its
restriction to $E_n$ as well. Let $U$ be \'etale over $X$ and
$U_{E_n}$ its base change to $E_n$. Consider
the following diagram, in which all squares are Cartesian.
\[
\xymatrix{
U_{E_n}\times_XY_n \ar[r]_{\gamma_{U\cap Y_n}}^{\sim} \ar[d] & U_{E_n}\times_XY_n \ar[d] \ar[r] &
Y_n \ar[d] \\
U_{E_n} \ar[r]_{\gamma_U}^{\sim} \ar[d] & U_{E_n} \ar[r] \ar[d] & X \ar[d] \\
\Spec{E_n} \ar[r]_{\Spec{\gamma}}^{\sim} & \Spec{E_n} \ar[r] & \Spec{E} }
\]
Set $X_{E_n} =
X\times_\Spec{E}\Spec{E_n}$ and let $(\mathcal{F}_n)_{X_{E_{n}}}$ be
the inverse image of $\mathcal{F}_n$ on $X_{E_n}$. The restriction of
the Galois sheaf isomorphism to this level
\[
\mu(\gamma)(U)\colon \gamma_*\mathcal{F}_n(U)
\stackrel{\sim}{\longrightarrow} \mathcal{F}_n(U)
\]
is simply the restriction map
$\left(\mathcal{F}_n\right)_{X_{E_n}}(\gamma_U)$ associated to $\gamma_U$. 
To complete the comparison with the other
action, we need to explicitly describe the action of these
isomorphisms on sections of the covering
$U_{E_n}\times_XY_n\rightarrow U_{E_n}$. 

Therefore, consider
\[
\xymatrix{
U_{E_n}\times_XY_n \ar[rr]_{\gamma_{U\cap Y_n}}^{\sim} \ar[d] & & U_{E_n}\times_XY_n \ar@/^/[d]  \\
U_{E_n} \ar[rr]_{\gamma_U}^{\sim} & & U_{E_n} \ar@/^/[u]^\sigma .  \\
 }
\]
$\sigma$ being a section. The image of $\sigma$, $(\mu(\gamma)(U))^{-1}(\sigma )$, is determined
by the universal property of $U_{E_n}\times_XY_n $ using the two maps
\begin{eqnarray*}
U_{E_n} & \stackrel{\sigma\circ\gamma_U}{\longmapsto} &
U_{E_n}\times_XY_n \\
U_{E_n} & \stackrel{\id}{\longmapsto} & U_{E_n}.
\end{eqnarray*}
The resulting map is easily seen to be 
\[
\mu(\gamma)(U)^{-1}(\sigma ) = \gamma_{U\cap
  Y_n}^{-1}\circ\sigma\circ\gamma_U.
\]
Thus,
\[
\mu(\gamma)(U)(\sigma ) = \gamma_{U\cap
  Y_n}\circ\sigma\circ\gamma^{-1}_U.
\]

The equivalence of the two actions is confirmed by the following
composite diagram.
\[
\xymatrix{
U_{E_n}\times_XY_n \ar[r]^{\sim}_{\gamma_{U\cap Y_n}} \ar[d]_{\pi_U} &
U_{E_n}\times_XY_n \ar[rr]^{\pi_{Y_n}} \ar@/^/[d]^{\pi_U} & & Y_n \ar[d]
& Y_n \ar[dl] \ar[l]_{\varphi_n} \\
U_{E_n} \ar[r]^{\sim}_{\gamma_U} \ar[d]_{\pi_{E_n}} & U_{E_n}
\ar@/^/[u]^{\sigma} \ar[r]
\ar[d]^{\pi_{E_n}} & U  \ar[r] \ar[d] & X \ar@/_/[dl] & \\
\Spec{E_n} \ar[r]^{\sim}_{\Spec{\gamma}} & \Spec{E_n} \ar[r] &
\Spec{E} \ar@/_/[ur]_{e} & & }
\]
Of particular importance is the commmuting square
\[
\xymatrix{
U_{E_n}\times_XY_n \ar[r]^{\pi_{Y_n}} \ar@/^/[d] & Y_n
\ar[d]^{\varphi_n} \\
U_{E_n} \ar@/^/[u]^{\sigma} \ar[r]_{e_n\circ\pi_{E_n}} & Y_n, }
\]
which implies that 
\begin{equation}\label{eqn:fromsquare}
\pi_{Y_n}\circ\sigma = \varphi_n\circ e_n\circ\pi_{E_n}.
\end{equation}
Once again, the Galois sheaf action is given by 
\begin{eqnarray}\label{eqn:sheafactiondefn}
\gamma\cdot\sigma & = & \mu (\gamma )(U)(\sigma ) \\ \notag & = &
\gamma_{U\cap Y_n}^{-1}\circ\sigma\circ\gamma_U,
\end{eqnarray}
which we can more precisely characterize by taking the projections to
the two components of the fibre product:
\begin{align*}
\pi_{U_{E_n}}\circ (\gamma\cdot\sigma ) & =  \pi_{U_{E_n}}\circ
\gamma_{U\cap Y_n}^{-1}\circ\sigma\circ\gamma_U &\textrm{(by \ref{eqn:sheafactiondefn})}\\
& =  \gamma_U^{-1}\circ\pi_{U_{E_n}}\circ\sigma\circ\gamma_U^{-1} &\\
& =  \id_{U_{E_n}}. &
\end{align*}
\begin{align*}
\pi_{Y_n}\circ (\gamma\cdot\sigma )  & =  \pi_{Y_n}\circ\gamma_{U\cap Y_n}^{-1}\circ\sigma\circ\gamma_U &\textrm{(by \ref{eqn:sheafactiondefn})}\\
& =  \id_{Y_n}\circ\pi_{Y_n}\circ\sigma\circ\gamma_U^{-1} &\textrm{(by
  def'n of $\gamma_{U\cap Y_n}$)}\\
& =  \varphi_n\circ e_n\circ\pi_{E_n}\circ\gamma_U &\textrm{(by~\ref{eqn:fromsquare})}\\
& =  \varphi_n\circ e_n\circ\Spec{\gamma}\circ\pi_{E_n} &\\
& =  \varphi_n\circ\varphi_n\circ e_n\circ\pi_{E_n} & \textrm{(by def'n
  of $\varphi_n$)}\\
& =  \varphi_n\circ\pi_{Y_n}\circ\sigma & \textrm{(by~\ref{eqn:fromsquare})}\\
& =  \pi_{Y_n}\circ (\id_{U_{E_n}}\times\varphi_n)\circ\sigma & \\
& =  \pi_{Y_n}\circ \varphi_n^U\circ\sigma &\\
& =  \pi_{Y_n}\circ (\gamma\cdot\sigma ), &\textrm{(by~\ref{eqn:fgGaloisaction})}
\end{align*}
where the action in the final line refers to the action of the Galois
group induced by the fundamental group action described above in
Equation~\ref{eqn:fgGaloisaction}. 
The Galois sheaf action is therefore trivial on the contribution from
$U_{E_n}$ and is identical to the Galois action from the fundamental
group on the contribution from $Y_n$. We have therefore proved that a finite sheaf on a scheme
over a field $E$ with an action of the absolute Galois group on its
stalk at a chosen geometric point is equivalent to a Galois sheaf,
\ie, the data of a sheaf on a scheme over the algebraic closure of $E$
together with a family of isomorphisms as in
Definition~\ref{def:compatibleaction}. 
\end{proof}

\begin{proposition}\label{cor:galoislocalsystem}
Let $X$ be a scheme over $K$ and suppose $X$ has a $K$-rational point
$x$ with corresponding geometric point
$\cl{x}$. Let $\algfg{X,\cl{x}} =
\algfg{\cl{X},\cl{x}}_{\Gal{\cl{K}/K}}\times\Gal{\cl{K}/K}$ be the
splitting of $\algfg{X,\cl{x}}^\textrm{ab}$ coming from $\pi_1(\cl{x})$.
\begin{enumerate}
\item Let $\mathcal{F}$ be a $\cl{\QQ}_\ell$-local system equivalent
  to
\[
\rho\colon\algfg{X,\cl{x}}\longrightarrow \cl{\QQ}_\ell^\times .
\]
Then, the natural compatible, continuous action of $\Gal{\cl{K}/K}$
arising from base change is equal to the action of $\Gal{\cl{K}/K}$
induced by the action of $\algfg{X,\cl{x}}$ and the Galois character
\[
\Gal{\cl{K}/K}\stackrel{\pi_1(x)}{\longrightarrow}\algfg{X,\cl{x}}\stackrel{\rho}{\longrightarrow}
\cl{\QQ}_\ell^\times .
\]
\item The category of $\cl{\QQ}_\ell$-local systems on $\cl{X}$
  equivalent to
  characters that factor through
  $\algfg{\cl{X},\cl{x}}_{\Gal{\cl{K}/K}}$, equipped with a
  continuous action of $\Gal{\cl{K}/K}$, is equivalent to
  the category of $\cl{\QQ}_\ell$-local systems on $X$.
\end{enumerate}
\end{proposition}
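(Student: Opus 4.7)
The plan is to derive this proposition as a direct consequence of Lemma~\ref{lemma:galoisactionexchange} by pushing it through the successive constructions that take us from finite locally constant sheaves up to $\cl{\QQ}_\ell$-local systems, and then reinterpreting the result in representation-theoretic terms via the splitting of the homotopy exact sequence.

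For part (1), I would first reduce to the case of a smooth $\pi_E$-adic sheaf $\mathcal{F} = (\mathcal{F}_n)$ representing the given $\cl{\QQ}_\ell$-local system, for some finite extension $E/\QQ_\ell$ (as guaranteed by the construction of $\cl{\QQ}_\ell$-local systems as directed colimits of $E$-local systems). Under the correspondence in Proposition~\ref{prop:piadiccorrespondence}, the character $\rho$ is presented as a compatible family $\rho_n\colon\algfg{X,\cl{x}}\to (\mathcal{O}_E/\pi_E^{n+1}\mathcal{O}_E)^\times$, and each $\mathcal{F}_n$ is a finite locally constant sheaf corresponding to $\rho_n$. Lemma~\ref{lemma:galoisactionexchange} applies to each $\mathcal{F}_n$ individually: the natural Galois sheaf action on $\cl{\mathcal{F}}_n$ obtained by base change agrees with the action induced by $\rho_n\circ\pi_1(x)$. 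Since both the $\pi_E$-adic structure and the compatible Galois action on $\cl{\mathcal{F}}$ (Section~\ref{section:systemactions}) are defined componentwise, the termwise agreement supplied by the lemma lifts to an agreement at the level of the $\pi_E$-adic sheaf, then to the $E$-local system $\mathcal{F}\otimes E$ by the definition of morphisms in the Artin-Rees category after tensoring with $E$, and finally to the $\cl{\QQ}_\ell$-local system by passing to the directed limit over finite extensions, since the functors $\mathcal{F}\mapsto\cl{\mathcal{F}}$ (base change) and $(g_*,\mu(g))\mapsto (\rho\circ\pi_1(x))(g)$ both commute with these limits.

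For part (2), I would build the equivalence representation-theoretically and then use part (1) to match it against the sheaf-theoretic picture. By Proposition~\ref{prop:ladiclscorr}, a $\cl{\QQ}_\ell$-local system on $X$ corresponds to a continuous character $\rho\colon\algfg{X,\cl{x}}\to\cl{\QQ}_\ell^\times$, which through the splitting $\algfg{X,\cl{x}}^{\mathrm{ab}} \iso \algfg{\cl{X},\cl{x}}_{\Gal{\cl{K}/K}}\times \Gal{K^{\mathrm{ab}}/K}$ is the same as a pair consisting of a character of $\algfg{\cl{X},\cl{x}}_{\Gal{\cl{K}/K}}$, which by Proposition~\ref{prop:ladiclscorr} applied to $\cl{X}$ corresponds to a $\cl{\QQ}_\ell$-local system on $\cl{X}$ of the prescribed type, together with a character of $\Gal{\cl{K}/K}$, which, by the extension of Definition~\ref{def:compatibleaction} to $\cl{\QQ}_\ell$-local systems given in Section~\ref{section:systemactions}, endows the base-changed local system with a continuous compatible Galois action. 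This assignment is manifestly a functor and is quasi-inverse (on abelianized representations) to the obvious functor in the other direction that assembles a character of the semidirect product from a character of the normal factor and a character of the quotient. Finally, part (1) ensures that the Galois action recovered on the sheaf side via base change is exactly the one encoded by the Galois character factor of $\rho$, so the equivalence at the level of characters lifts to an equivalence of categories of sheaves with Galois structure.

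The main obstacle I expect is purely bookkeeping in part (1): verifying that the termwise agreement provided by Lemma~\ref{lemma:galoisactionexchange} really does survive the passage through the Artin-Rees category, the tensor-with-$E$ functor, and the colimit over $E$ used to construct $\cl{\QQ}_\ell$-local systems, and that both sides of the claimed equality are indeed continuous in the sense of Definition~\ref{def:ctsaction} extended componentwise. All of the limit and colimit compatibilities follow from universal properties, but the details need to be spelled out carefully because the Galois action is defined one component at a time while the local system lives several constructions removed from its underlying projective system. Once this compatibility is set out, part (2) is formal.
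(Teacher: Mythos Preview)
Your proposal is correct and follows essentially the same route as the paper: the paper's proof simply says the result follows from Lemma~\ref{lemma:galoisactionexchange}, the homotopy exact sequence, and Proposition~\ref{prop:delignegaloissheaf}, applied term by term to the component sheaves of $\mathcal{F}$. Your write-up is considerably more explicit about the bookkeeping (passage through the Artin--Rees category, tensoring with $E$, and the colimit over $E$), but the underlying argument is the same termwise reduction to the lemma plus the splitting of the homotopy exact sequence.
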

\begin{proof}
Follows from Lemma~\ref{lemma:galoisactionexchange}, together with the homotopy exact
sequence and Proposition~\ref{prop:delignegaloissheaf}, applied term
by term to the component sheaves of $\mathcal{F}$.
\end{proof}




   \chapter{Some Local Systems on the Unipotent Cone in SL(2)}\label{ch:somelocalsystems}

This chapter is devoted to introducing the pair of local systems on
the nilpotent cone in $\SL{2}{K}$ whose nearby cycles will be studied
in the next chapter, and to which we will associate distributions in
Chapter~\ref{ch:distributions}. In the final section, we also
establish that the local systems we have defined are equivariant with
respect to the conjugation action of $\SL{2}{K}$ on itself.

Let $\mathcal{O}$ be a Henselian discrete valuation ring
with field of fractions $K$ and residue field $k$ of characteristic
$p>0$. Assume also that the characteristic
of $K$ is $0$. 

\section{$\SL{2}{K}$ and the Unipotent Cone}\label{section:SLandU}

We begin by describing the setting for the work done in the remainder of
the thesis, the subvariety of regular unipotent elements
$\mathcal{U}_{0,K}$ of the linear algebraic group $\SL{2}{K}$. Our first
step is to recall these schemes and to determine $\algfg{\mathcal{U}_{0,K},\cl{u}}$,
which will allow us to define the local systems.

\subsection{The Schemes}\label{ssection:schemes}


In Section~\ref{section:lsonU}, we will define objects on the linear algebraic group
\[
\operatorname{SL}(2) \ceq \Spec{\ZZ [a,b,c,d]/(ad-bc-1)},
\]
a closed subgroup of $\operatorname{GL}(2) =
\Spec{\ZZ [a,b,c,d]_{(ad-bc)}}$\footnote{The notation here, which appears
  again several times below and on first blush might appear to mean
  localization at a prime ideal, actually means localization of the ring
  $K[a,b,c,d]$ at the multiplicative set composed of powers of
  $ad-bc$. Indeed, the ideal $(ad-bc)$ is not even prime.}. 
To abbreviate notation, we define
\[
\ZZ [\operatorname{SL}(2)] \ceq \ZZ [a,b,c,d]/(ad-bc-1).
\]  
The group structure of $\operatorname{SL}(2)$ is defined by the following
comultiplication, antipode, and identity maps.
\begin{enumerate}
\item Comultiplication: \begin{eqnarray*} m\colon \ZZ [\operatorname{SL}(2)]
    & \longrightarrow & \ZZ [\operatorname{SL}(2)] \otimes_\ZZ \ZZ [\operatorname{SL}(2)] \\
a & \longmapsto & a\otimes a + b \otimes c \\
b & \longmapsto & a\otimes b + b\otimes d \\
c & \longmapsto & a\otimes c + c\otimes d \\
d & \longmapsto & b\otimes c + d\otimes d \end{eqnarray*}
\item Antipode: \begin{eqnarray*} \iota\colon \ZZ [\operatorname{SL}(2)]
    &\longrightarrow & \ZZ [\operatorname{SL}(2)] \\
a & \longmapsto & d \\
b & \longmapsto & -b \\
c & \longmapsto & -c \\
d & \longmapsto & a \end{eqnarray*}
\item Identity: \begin{eqnarray*} \ZZ [\operatorname{SL}(2)] &
    \longrightarrow & \ZZ \\
a &\longmapsto & 1 \\
b & \longmapsto & 0 \\
c & \longmapsto & 0 \\
d & \longmapsto & 1 \end{eqnarray*}
\end{enumerate} 
The familiar matrix group laws are recognizable in these maps and we
will write the operations in their more familiar form (\ie , on
points) when convenient. From now on, write $G=\SL{2}{K} \ceq
\operatorname{SL}(2)\times_{\Spec{\ZZ}}\Spec{K}$. For the coordinate
ring of $G$, write $K[\SL{2}{}]$

The \emph{unipotent} elements $g\in \operatorname{SL}(2)$ are those satisfying
\[
(g-I)^n = 0
\]
for some $n$. As a vanishing condition, this defines a closed
subvariety of $\operatorname{SL}(2)$ for which we can find an alternate condition
by simply considering the characteristic polynomial of such an
element,
\begin{eqnarray*}
\operatorname{\chi}_g(x) &=& x^2 - \operatorname{tr}(g)x +
\operatorname{det}(g) \\
&=& x^2 - \operatorname{tr}(g)x + 1.
\end{eqnarray*}
The minimal polynomial $\operatorname{min}_g(x)$ of $g$ divides this, and when it is a product
of distinct linear factors, $g$ is diagonalizable
and thus not unipotent (the identity being the obvious exception). The only possibilities remaining are
$\operatorname{min}_g(x) = (x+1)^2$ or $\operatorname{min}_g(x) =
(x-1)^2$, the latter of which clearly corresponds to unipotent
$g$. Therefore, the unipotent elements can alternately be
characterized as elements with trace 2, and we can define the
unipotent subvariety as the affine scheme
\[
\mathcal{U} \ceq \Spec{\ZZ [a,b,c,d]/(ad-bc-1, a+d-2)}.
\]
Once again, we will work with $\mathcal{U}_K =
\mathcal{U}\times_{\Spec{\ZZ}}\Spec{K}$ and will simplify notation by setting 
\[
\ku \ceq K[a,b,c,d]/(ad-bc-1,a+d-2).
\]

The identity matrix is a singular point of this variety, and we will need
to work on the smooth subvariety of regular unipotent elements obtained by removing this one
singular (non-regular) point. The resulting scheme is no longer affine, but
can be constructed by gluing together two schemes\footnote{For details on gluing schemes, see
  Hartshorne~\cite{hartshorne}.}. The first is the
open subscheme of $\mathcal{U}_K$ defined by the condition $b\neq 0$,
\[
\mathcal{U}_{b\neq 0} \ceq
\Spec{\ZZ [\mathcal{U}]_{b}},
\]
and the second by $c\neq 0$,
\[
\mathcal{U}_{c\neq 0} \ceq
\Spec{\ZZ [\mathcal{U}]_{c}}.
\]
Both of these schemes contain the open subscheme where $b\neq 0 \neq
c$,
\[
\mathcal{U}_{b\neq 0, c\neq 0} \ceq
\Spec{\ZZ [\mathcal{U}]_{bc}}.\footnote{Once again, the rings in these
  definitions are localizations at the \emph{elements} in the
  subscript, not at prime ideals}
\]
The variety of regular (\ie , nonsingular) unipotent elements $\mathcal{U}_{0}$ is obtained by gluing the
schemes $\mathcal{U}_{b\neq 0}$ and $\mathcal{U}_{c\neq 0}$ along the
open subscheme $\mathcal{U}_{b\neq 0,c\neq 0}$ by the identity
isomorphism. Throughout the chapter, we will work with
$\mathcal{U}_{0,K} \ceq \mathcal{U}_0\times\Spec{K}$.

\subsection{Fundamental Groups}\label{ssection:fundamentalgroups}

In the next section, we will define two $\cl{\QQ}_\ell$-local systems on
$\mathcal{U}_{0,K}$ and extend these local systems to all of
$G = \SL{2}{K}$. By Proposition~\ref{prop:ladiclscorr}
it is possible to define such objects simply by specifying a
continuous character of the \'etale fundamental group
$\algfg{\mathcal{U}_{0,K},\cl{u}}$. Therefore we need to determine this group. 

First, fix a geometric point $\cl{u}$ of $\mathcal{U}_{0,K}$: Let
\begin{eqnarray*}
\cl{u}\colon K[\mathcal{U}]_{(a-1)},K[\mathcal{U}]_{(d-1)} & \longrightarrow & \cl{K} \\
a &\longmapsto & 1 \\
b &\longmapsto & 1 \\
c & \longmapsto & 0 \\
d & \longmapsto & 1;
\end{eqnarray*}
\ie , 
\[
\cl{u} = \left(\begin{array}{cc} 1 & 1 \\ 0 &
    1 \end{array}\right).
\]
This map actually factors through the inclusion
$K\hookrightarrow\cl{K}$, giving a $K$-rational point $u$, which splits the homotopy exact sequence from
Section~\ref{subsection:homexactseq} applied to
$\mathcal{U}_{0,K}$:
\[
1\rightarrow \algfg{\cl{\mathcal{U}}_0,\cl{u}}\rightarrow
\algfg{\mathcal{U}_{0,K},\cl{u}}\rightarrow \Gal{\cl{K}/K}\rightarrow 1,
\]
where $\cl{\mathcal{U}}_0 \ceq
  \mathcal{U}_{0,K}\times_{\Spec{K}}\Spec{\cl{K}}$. Thus, we only need to determine the
  fundamental group of $\cl{\mathcal{U}}_0$ to calculate
  $\algfg{\mathcal{U}_{0,K},\cl{u}}$.
\begin{lemma}\label{lemma:geofgU}
$\algfg{\cl{\mathcal{U}}_0,\cl{u}} \iso \mu_2$.
\end{lemma}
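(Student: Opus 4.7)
The plan is to realize $\cl{\mathcal{U}}_0$ as the free quotient of $\mathbb{A}^2_{\cl{K}}\setminus\{0\}$ by a $\mu_2$-action, thereby producing a connected Galois \'etale double cover, and then to show the covering space is simply connected.

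First I would change coordinates on $\mathcal{U}$. Using $a+d=2$ to eliminate $d$ and setting $t\ceq a-1$, the condition $ad-bc=1$ becomes $t^2+bc=0$, so $\mathcal{U}\iso\Spec{\ZZ[t,b,c]/(t^2+bc)}$ is the affine quadric cone in $\mathbb{A}^3$; the identity matrix is its vertex, and $\mathcal{U}_0$ is the smooth locus. The ring homomorphism $K[\mathcal{U}]\to K[x,y]$ given by $t\mapsto xy$, $b\mapsto x^2$, $c\mapsto -y^2$ respects the defining relation, and its image is the invariant subring $K[x,y]^{\mu_2}=K[x^2,xy,y^2]$ for the action $(x,y)\mapsto(-x,-y)$; indeed that invariant ring is generated precisely by $x^2$, $y^2$, $xy$ with the single relation $(x^2)(y^2)=(xy)^2$. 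Since $K$ has characteristic zero, this $\mu_2$-action is free on $\mathbb{A}^2_K\setminus\{0\}$, so the restricted morphism $\phi\colon\mathbb{A}^2_K\setminus\{0\}\to\mathcal{U}_{0,K}$ is a connected Galois \'etale cover of degree $2$ with group $\mu_2$; base changing gives the analogous cover of $\cl{\mathcal{U}}_0$.

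To finish it suffices to show $\algfg{\mathbb{A}^2_{\cl{K}}\setminus\{0\},\cl{u}}=1$, since the short exact sequence
\[
1\longrightarrow\algfg{\mathbb{A}^2_{\cl{K}}\setminus\{0\},\cl{u}}\longrightarrow\algfg{\cl{\mathcal{U}}_0,\cl{u}}\longrightarrow\mu_2\longrightarrow 1
\]
attached to the Galois cover $\phi$ will then collapse to $\algfg{\cl{\mathcal{U}}_0,\cl{u}}\iso\mu_2$. Choose an embedding $\cl{K}\hookrightarrow\CC$, which exists because $K$ has characteristic zero. Over $\CC$, Proposition~\ref{prop:topfgcomparison} identifies $\algfg{\mathbb{A}^2_{\CC}\setminus\{0\},\cl{u}}$ with the profinite completion of $\pi_1^{\mathrm{top}}(\CC^2\setminus\{0\})$, and this latter group is trivial since $\CC^2\setminus\{0\}$ deformation retracts onto $S^3$. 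The desired vanishing over $\cl{K}$ then follows from the invariance of the \'etale fundamental group of a characteristic-zero variety under algebraically closed base extension (Proposition~\ref{prop:algclosedcomparisonfg} handles the proper case, and the non-proper analogue is standard, cf.~SGA~1 Exp.~X).

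The main obstacle is this last transfer. Proposition~\ref{prop:algclosedcomparisonfg} as stated requires properness, which $\mathbb{A}^2\setminus\{0\}$ manifestly lacks; the cleanest remedy is to embed $\mathbb{A}^2\setminus\{0\}$ as the complement of a normal-crossings divisor in a smooth projective compactification, apply the proposition to the compactification, and descend via the surjection of fundamental groups induced by an open immersion together with the compatibility of tame inertia under base change. Everything else in the plan is either a routine invariant-theory computation or a reduction to the elementary topological fact that $S^3$ is simply connected.
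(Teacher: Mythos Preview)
Your argument is essentially the paper's: both use the degree-$2$ \'etale cover $\mathbb{A}^2_0\to\mathcal{U}_0$ given by $(x,y)\mapsto\left(\begin{smallmatrix}1-xy & x^2\\ -y^2 & 1+xy\end{smallmatrix}\right)$, and both reduce the computation to the simple connectedness of $\CC^2\setminus\{0\}$ via the comparison theorems (Propositions~\ref{prop:topfgcomparison} and~\ref{prop:algclosedcomparisonfg}). The only organizational difference is that the paper first transports $\algfg{\cl{\mathcal{U}}_0}$ to $\widehat{\pi_1^{\mathrm{top}}(\mathcal{U}_{0,\CC}^{\mathrm{an}})}$ and then exhibits the universal cover topologically, whereas you set up the $\mu_2$-Galois cover algebraically over $\cl{K}$, obtain the short exact sequence, and apply the comparison only to the covering space $\mathbb{A}^2_{\cl{K}}\setminus\{0\}$. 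You are in fact more careful than the paper on one point: Proposition~\ref{prop:algclosedcomparisonfg} as stated requires properness, which neither $\mathcal{U}_0$ nor $\mathbb{A}^2_0$ enjoys; the paper invokes it without comment, while you flag the gap and point to the standard remedy via a good compactification (SGA~1, Exp.~XIII, or equivalently the result for complements of normal-crossings divisors).
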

\begin{proof}
For this we deploy the comparison results
Proposition~\ref{prop:topfgcomparison} and
Proposition~\ref{prop:algclosedcomparisonfg}, which tell us that
$\algfg{\cl{\mathcal{U}}_0,\cl{u}} \iso
\widehat{\operatorname{\pi_1^\mathrm{top}}(\mathcal{U}_{0,\CC})}$. As
a complex manifold $\mathcal{U}_{0,\CC}$ has a universal cover, the
punctured affine plane $\mathbb{A}^2_{0,\CC}$:
\begin{align*}
\mathbb{A}^2_{0,\CC} &\longrightarrow  \mathcal{U}_{0,\CC} \\
(x,y) &\longmapsto \begin{pmatrix} 1-xy & x^2 \\ -y^2 & 1 + xy \end{pmatrix}.
\end{align*}

That $\mathbb{A}^2_{0,\CC}$ is simply connected follows from the fact
that it is homotopy equivalent to $S^1_\CC \iso S^2_\RR$, which can be
shown to be simply connected by invoking the Seifert-Van Kampen
theorem\footnote{See, for exmaple, Munkres~\cite[Theorem 70.1]{munkres}.}. Hence,
\[
\operatorname{\pi_1^\mathrm{top}}(\mathcal{U}_{0,\CC}) \iso
\Aut_{\mathcal{U}_{0,\CC}}(\mathbb{A}^2_{0,\CC}),
\]
and this latter group consists of one nontrivial automorphism that
maps $x$ to $-x$ and $y$ to $-y$. Thus, by
Propositions~\ref{prop:topfgcomparison}
and~\ref{prop:algclosedcomparisonfg},
\[
\algfg{\cl{\mathcal{U}}_0,\cl{u}} \iso
\widehat{\operatorname{\pi_1^\mathrm{top}}(\mathcal{U}_{0,\CC})} =
\widehat{\Aut_{\mathcal{U}_{0,\CC}}(\mathbb{A}^2_{0,\CC})} =
\widehat{\mu_2} = \mu_2.
\]
\end{proof}



\begin{lemma}\label{lemma:algfgU}
$\algfg{\mathcal{U}_{0,K},\cl{u}}\iso \mu_2\times\Gal{\cl{K}/K}$.
\end{lemma}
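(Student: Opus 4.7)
The plan is to combine Lemma~\ref{lemma:geofgU} with the homotopy exact sequence from Section~\ref{subsection:homexactseq} and observe that the relevant semidirect product must collapse to a direct product because $\mu_2$ has no nontrivial automorphisms.

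More precisely, the scheme $\mathcal{U}_{0,K}$ is a quasi-compact and geometrically integral scheme over $K$, so the homotopy exact sequence yields
\[
1\rightarrow \algfg{\cl{\mathcal{U}}_0,\cl{u}}\rightarrow \algfg{\mathcal{U}_{0,K},\cl{u}}\rightarrow \Gal{\cl{K}/K}\rightarrow 1.
\]
The $K$-rational point $u$ chosen in Section~\ref{ssection:fundamentalgroups} gives a splitting $\pi_1(u)\colon\Gal{\cl{K}/K}\to\algfg{\mathcal{U}_{0,K},\cl{u}}$ of this sequence, so we can write
\[
\algfg{\mathcal{U}_{0,K},\cl{u}}\ \iso\ \algfg{\cl{\mathcal{U}}_0,\cl{u}}\rtimes\Gal{\cl{K}/K}.
\]
By Lemma~\ref{lemma:geofgU}, $\algfg{\cl{\mathcal{U}}_0,\cl{u}}\iso\mu_2$, so this becomes $\mu_2\rtimes\Gal{\cl{K}/K}$.

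To conclude, I would simply observe that the conjugation action of $\Gal{\cl{K}/K}$ on the normal subgroup $\mu_2\iso\ZZ/2\ZZ$ must be trivial, since $\Aut(\mu_2)$ is the trivial group. Hence the semidirect product is in fact a direct product, giving $\algfg{\mathcal{U}_{0,K},\cl{u}}\iso\mu_2\times\Gal{\cl{K}/K}$, as required. There is no serious obstacle here: the only content beyond Lemma~\ref{lemma:geofgU} and the homotopy exact sequence is the trivial observation that $\mu_2$ is its own automorphism-free center, and that is immediate.
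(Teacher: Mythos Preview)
Your proof is correct and follows the same route as the paper: apply the homotopy exact sequence, split it with the $K$-rational point $u$, and then argue that the resulting semidirect product $\mu_2\rtimes\Gal{\cl{K}/K}$ is actually direct. The only difference is in that last step: the paper verifies triviality of the conjugation action by inspecting explicit Galois covers and checking that the geometric automorphism (coordinate negation) commutes with the Galois-induced automorphisms, whereas your observation that $\Aut(\mu_2)$ is trivial settles the matter immediately and more cleanly.
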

\begin{proof}
The $K$-rational point $u$
splits the homotopy exact sequence, which, by
Lemma \ref{lemma:geofgU}, yields
\[
\algfg{\mathcal{U}_{0,K},\cl{u}}\iso \mu_2\rtimes\Gal{\cl{K}/K}.
\]
To see that the conjugation action of $\Gal{\cl{K}/K}$ is trivial,
consider an arbitrary connected Galois cover $Y$ of
$\mathcal{U}_{0,K}$. The automorphism group of $Y$ over $\mathcal{U}_{0,K}$
contains at most one nontrivial ``geometric'' automorphism: the image
of $-1$ under the functorial map from
$\algfg{\cl{\mathcal{U}}_0,\cl{u}}$. As is shown in the proof of
Lemma~\ref{lemma:defnf} below, this automorphism multiplies
coordinates by $-1$. The image of
$\gamma\in\Gal{\cl{K}/K}$ under $\pi_1(\cl{u})$ in the automorphism group of
$Y$ will either be the automorphism of $Y$ induced by $\gamma$ (which
may be trivial), or that automorphism composed with the geometric
automorphism. Conjugation of the geometric morphism by either of these will have no effect, since the geometric morphism is,
like $-1$, of order 2, and the natural induced Galois morphisms of $Y$
commute with the geometric automorphism. Thus, the semidirect product above is in fact
a direct product.
\end{proof}

\section{Local Systems on $\mathcal{U}_{0,K}$}\label{section:lsonU}

With the precise form of $\algfg{\mathcal{U}_{0,K},\cl{u}}$ now in hand,
the next step is to define the local systems we will study on
$\mathcal{U}_{0,K}$. Let $\varpi$ be a uniformizer
of the ring of integers of $K$, $\mathcal{O}$, and define the morphism of $K$-varieties $m(\varpi
)\colon G\rightarrow G$ by
\begin{align*}
m(\varpi )\colon K[\SL{2}{}] & \longrightarrow K[\SL{2}{}] \\
a &\longmapsto a \\
b &\longmapsto \frac{b}{\varpi} \\
c &\longmapsto \varpi c \\
d & \longmapsto d .
\end{align*}
On points, this is given by
\begin{align*}
\begin{pmatrix} a & b \\ c & d \end{pmatrix} &
\longmapsto \begin{pmatrix} a & \frac{b}{\varpi} \\ \varpi c &
  d \end{pmatrix} 
\end{align*}
\begin{definition}\label{def:thelocalsystems}
Let
\[
\rho_{\oE}\colon\algfg{\mathcal{U}_{0,K},\cl{u}}\longrightarrow
\cl{\QQ}_\ell
\]
be the product of the nontrivial character of $\mu_2$ and the
trivial character of $\Gal{\cl{K}/K}$, with respect to the product
decomposition $\algfg{\mathcal{U}_{0,K},\cl{u}}\iso
\mu_2\times\Gal{\cl{K}/K}$ defined by the choice of $K$-rational point
\[
u = \left( \begin{array}{cc} 1&1\\0&1\end{array}\right).
\] 
Let $\oE$ be the local system
on $\mathcal{U}_{0,K}$ corresponding to $\rho_{\oE}$ under the equivalence
of Proposition~\ref{prop:ladiclscorr}. In addition, let
\[
\oE^\prime \ceq m(\varpi )^*(\oE ).
\]
\end{definition}
Next, we study some alternate ways of defining $\oE$ and $\oE^\prime$
that will aid in the proof of Theorem~\ref{prop:mainlocalsystems}. In
particular, we are interested in defining the local systems through
characters of the automorphism groups of specific Galois covers of $\mathcal{U}_{0,K}$.

These covers will be the punctured affine plane $\mathbb{A}^2_{0,K}$. Like
$\mathcal{U}_{0,K}$, this is a quasiaffine scheme constructed by
gluing the two copies of $\mathbb{A}^2$, one with the $x$-axis deleted, and
second with the $y$-axis deleted,
\[
\mathbb{A}^2_{y\neq 0} \ceq \Spec{\ZZ[x,y]_y} 
\]
and
\[
\mathbb{A}^2_{x\neq 0} \ceq \Spec{\ZZ[x,y]_x}
\]
along the common open subscheme
\[
\mathbb{A}^2_{x\neq 0\neq y} \ceq \Spec{\ZZ[x,y]_{xy}}
\]
with the identity isomorphism.
\begin{remark}\label{rem:affineforquasiaffine}
Just as $\mathcal{U}_0$ and $\mathbb{A}^2_0$ are contained in
$\mathcal{U}$ and $\mathbb{A}^2$ as open subschemes, both
$\mathcal{U}_{0,K}$ and $\mathbb{A}^2_{0,K}$ are contained as open
subvarieties in the affine varieties $\mathcal{U}_K$ and $\mathbb{A}^2_K$,
and all of the Galois covers we will consider can be defined on these
affine schemes and obtained by restriction to $\mathbb{A}^2_{0,K}$. This
is the approach we will take because it is much cleaner than defining
maps on components of the gluing data for $\mathcal{U}_{0,K}$ and
$\mathbb{A}^2_{0,K}$. But it should be noted that the affine versions of
these covers are not \'etale, as they all ramify at the identity in
$\mathcal{U}_K$. Thus, the notions and methods of
Chapter~\ref{ch:ladicsheaves} are not accessible for these maps;
restricting attention to the open subvarieties is a necessary step.
\end{remark}
\begin{lemma}\label{lemma:defnf}
The character $\rho_{\oE}$ is obtained by inflating the nontrivial
character of the degree 2 Galois cover given on points by
\begin{align*}
f\colon \mathbb{A}^2_{0,K} &\longrightarrow \mathcal{U}_{0,K} \\
(x,y) & \longmapsto \begin{pmatrix} 1 - xy & x^2 \\ -y^2 & 1 + xy \end{pmatrix}
\end{align*}
and defined on coordinate rings by
\begin{align*}
K[\mathcal{U}] &\longrightarrow K[\mathbb{A}^2] \\
a&\longmapsto 1 -xy \\
b&\longmapsto x^2 \\
c&\longmapsto -y^2 \\
d&\longmapsto 1+xy,
\end{align*}
to all of $\algfg{\mathcal{U}_{0,K},\cl{u}}$.
\end{lemma}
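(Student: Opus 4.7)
The plan is to verify that $f$ is a connected Galois \'etale cover of $\mathcal{U}_{0,K}$ of degree~2 with automorphism group $\mu_2$, and then to show that the associated quotient map
\[
q\colon \algfg{\mathcal{U}_{0,K},\cl{u}}\longrightarrow \Aut_{\mathcal{U}_{0,K}}(f)^{\mathrm{op}}\iso \mu_2
\]
agrees with the projection onto the first factor of the decomposition $\algfg{\mathcal{U}_{0,K},\cl{u}}\iso \mu_2\times\Gal{\cl{K}/K}$ of Lemma~\ref{lemma:algfgU}. Inflating the unique nontrivial character of $\mu_2$ along $q$ then produces $\rho_{\oE}$ tautologically. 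To see that $f$ has the claimed form, observe that its image matrix has determinant $(1-xy)(1+xy)-x^2(-y^2)=1$ and trace $(1-xy)+(1+xy)=2$, and equals the identity only at $(x,y)=(0,0)$, which is excluded from $\mathbb{A}^2_{0,K}$. The group $\mu_2$ acts freely on $\mathbb{A}^2_{0,K}$ by $(x,y)\mapsto(-x,-y)$, so the quotient morphism $\mathbb{A}^2_{0,K}\to \mathbb{A}^2_{0,K}/\mu_2$ is a degree~2 \'etale Galois cover. The invariant ring $K[x,y]^{\mu_2}=K[x^2,y^2,xy]$ with its relation $(xy)^2 = x^2y^2$ coincides, under the assignments $b\mapsto x^2$, $c\mapsto -y^2$, $a\mapsto 1-xy$, $d\mapsto 1+xy$, with the presentation of $\ku$ obtained by eliminating $d$ via $a+d=2$; this identifies the quotient with $\mathcal{U}_{0,K}$ and the quotient morphism with $f$. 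Since $\mathbb{A}^2_{0,K}$ is connected, $f$ is then a connected Galois cover with $\Aut_{\mathcal{U}_{0,K}}(f)=\mu_2$.

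To identify $q$, I consider the two factors of $\algfg{\mathcal{U}_{0,K},\cl{u}}$ separately. On the geometric factor, the proof of Lemma~\ref{lemma:geofgU} identifies $\algfg{\cl{\mathcal{U}}_0,\cl{u}}$ with $\Aut_{\cl{\mathcal{U}}_0}(\mathbb{A}^2_{0,\cl{K}})$ precisely through the base change $f_{\cl{K}}$ of $f$, so the restriction of $q$ to the geometric factor is an isomorphism onto $\Aut_{\mathcal{U}_{0,K}}(f)=\mu_2$. On the Galois factor, I use that $f$ admits the $K$-rational lift $\tilde u = (1,0)$ of $u$, with corresponding geometric point $\cl{\tilde u}$ above $\cl{u}$. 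Because $\cl{\mathbb{A}}^2_0$ is simply connected, $\pi_1(\tilde u)$ is an isomorphism $\Gal{\cl{K}/K}\iso \algfg{\mathbb{A}^2_{0,K},\cl{\tilde u}}$, and functoriality of $\pi_1$ applied to $u = f\circ \tilde u$ shows that the Galois splitting of $\algfg{\mathcal{U}_{0,K},\cl{u}}$ factors as $\pi_1(f)\circ\pi_1(\tilde u)$. The image of $\pi_1(f)$ is the stabilizer in $\algfg{\mathcal{U}_{0,K},\cl{u}}$ of $\cl{\tilde u}$ under its action on the fibre of $f$ above $\cl{u}$; because the $\Aut_{\mathcal{U}_{0,K}}(f)$-action on this fibre is simply transitive and the $\algfg{\mathcal{U}_{0,K},\cl{u}}$-action factors through $q$, this stabilizer is exactly $\kernel{q}$. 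So the Galois splitting lands in $\kernel{q}$, confirming that $q$ is the first projection.

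Combining these two observations, inflating the nontrivial character of $\mu_2=\Aut_{\mathcal{U}_{0,K}}(f)$ along $q$ yields the product of the nontrivial character of the $\mu_2$ factor of $\algfg{\mathcal{U}_{0,K},\cl{u}}$ with the trivial character of $\Gal{\cl{K}/K}$, which is $\rho_{\oE}$ by Definition~\ref{def:thelocalsystems}. The principal obstacle is the identification of $q$ with the first projection, and in particular the verification that the Galois splitting lies in $\kernel{q}$; this requires careful tracking of base points, and rests on the observation that the $K$-rational point $u$ lifts to the $K$-rational point $\tilde u$ of $\mathbb{A}^2_{0,K}$.
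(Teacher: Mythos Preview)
Your proof is correct and follows essentially the same strategy as the paper: verify that $f$ is a connected degree~2 Galois cover, then check separately that the geometric factor of $\algfg{\mathcal{U}_{0,K},\cl{u}}$ maps isomorphically onto $\Aut_{\mathcal{U}_{0,K}}(f)$ while the Galois factor maps trivially. The one noteworthy difference is in the treatment of the Galois factor. The paper carries out an explicit computation through the family of intermediate covers $f_L\colon\mathbb{A}^2_{0,L}\to\mathcal{U}_{0,K}$ for finite Galois $L/K$, tracking where each $\gamma\in\Gal{L/K}$ lands in $\Aut_{\mathcal{U}_{0,K}}(f_L)\iso\mu_2\times\Gal{L/K}$ and then projecting down. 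You instead invoke the general fact that the image of $\pi_1(f)$ is the stabilizer of the chosen lift $\cl{\tilde u}$, and identify this stabilizer with $\kernel{q}$ using simple transitivity of the deck group on the fibre; since $u$ lifts $K$-rationally to $\tilde u=(1,0)$, the Galois section factors through $\pi_1(f)$ and hence through $\kernel{q}$. Both arguments hinge on the same observation---that $(1,0)$ is a $K$-rational preimage of $u$---but your packaging is more conceptual and avoids the explicit passage through the $f_L$, at the cost of relying on a standard fact about \'etale fundamental groups that the paper prefers to unwind by hand.
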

\begin{proof}
That $f$ is really an \'etale morphism is confirmed using the Jacobean
criterion for \'etaleness (see, for example, Freitag and Kiehl~\cite{freitagkiehl},
Chapter I, Prop. 1.3).

The rest of the lemma follows from the fact that the functorial map
\[
\pi_1(\operatorname{pr})\colon\algfg{\cl{\mathcal{U}},\cl{u}}\longrightarrow
\algfg{\mathcal{U}_{0,K},\cl{u}}
\]
coming from the canonical projection
$\operatorname{pr}\colon\cl{\mathcal{U}}_0\rightarrow \mathcal{U}_{0,K}$
induces an isomorphism between $\algfg{\cl{\mathcal{U}}_0,\cl{u}}$ and
$\Aut_{\mathcal{U}_{0,K}}(f)$.

To see this, recall Equation~\ref{eqn:functorialrelation} and the
discussion surrounding it, which explained
how the functorial map is defined. In particular, if $\pr_f$ is the
canonical projection $\algfg{\mathcal{U}_{0,K},\cl{u}}\rightarrow
\Aut_{\mathcal{U}_{0,K}}(f)$, then the composition
$\operatorname{pr}_f\circ\pi_1(\operatorname{pr})$ is defined by
pulling back the diagram
\[
\xymatrix{
& \mathbb{A}^2_{0,K} \ar[d]^f \\
\cl{\mathcal{U}}_0 \ar[r]_{\operatorname{pr}} & \mathcal{U}_{0,K} }
\]
to give
\[
\xymatrix{
\mathbb{A}^2_{0, \cl{K}} \ar[d]_{\cl{f}} \ar[r]^{\operatorname{pr}^{\mathbb{A}^2}} & \mathbb{A}^2_{0,K} \ar[d]^f \\
\cl{\mathcal{U}}_0 \ar[r]_{\operatorname{pr}} & \mathcal{U}_{0,K} .}
\]
Then, $\pi_1(\pr )$ is defined by the relation
\[
(\operatorname{pr}^{\mathbb{A}^2})\circ\phi =
\pi_1(\operatorname{pr})(\phi )\circ (\operatorname{pr}^{\mathbb{A}^2}).
\]
The fibres of $f$ at all points aside from the identity contain 2
points, and its automorphism group consists of one nontrivial map,
denoted $(-1)$, that sends $x$ to $-x$ and $y$ to $-y$. \'Etale covers
are stable under base change, hence $\cl{f}$ is a cover as well. It is
likewise degree 2 with a single nontrivial covering map defined the
same way as $(-1)$, which we distinguish by denoting it by
$\cl{(-1)}$. This cover is the universal cover of $\cl{\mathcal{U}}_0$
implied by the proof of Lemma~\ref{lemma:geofgU}---the re-use of the
notation $\cl{f}$ is no accident. Since $\operatorname{pr}^{\mathbb{A}^2}$ is
equal to the canonical projection
$\mathbb{A}^2_{\cl{K}}\rightarrow\mathbb{A}^2_K$, it is clear that the
map
\[
\algfg{\cl{\mathcal{U}}_0,\cl{u}}\iso\Aut_{\cl{\mathcal{U}}_0}(\cl{f})
\stackrel{\pi_1(\operatorname{pr})}{\longrightarrow}
\Aut_{\mathcal{U}_{0,K}}(f)
\]
is an isomorphism.

To ensure that inflation of the nontrivial character of
$\Aut_{\mathcal{U}_{0,K}}(f)$ to $\algfg{\mathcal{U}_{0,K},\cl{u}}$ gives the
trivial character on $\Gal{\cl{K}/K}$, the next step is to calculate the composition
\[
\Gal{\cl{K}/K}\stackrel{\pi_1(u)}{\longrightarrow}
\algfg{\mathcal{U}_{0,K},\cl{u}}
\stackrel{\operatorname{pr}_f}{\longrightarrow}
\Aut_{\mathcal{U}_{0,K}}(f).
\]
Let $Y$ be any finite Galois cover of $\Spec{K}$. Thus, $Y=\Spec{L}$,
where $L$ is a finite Galois extension of $K$. The relevant Cartesian
diagram to consider is 
\[
\xymatrix{
\mathbb{A}^2_{0,L} \ar[d]_{f_L} & \Spec{L} \ar[l] \ar[d] \\
\mathcal{U}_{0,K} & \Spec{K} \ar[l]^{\cl{u}} }
\]
where $f_L$ is defined analogously to $f$. The covering group of $f_L$
consists of the automorphism $(-1)_L$, defined analogously to
$(-1)\in\Aut_{\mathcal{U}_{0,K}}(f)$, and the automorphisms induced from
elements of $\Gal{L/K}$. Thus, 
\[
\Aut_{\mathcal{U}_{0,K}}(f_L)\iso
\Aut_{\mathcal{U}_{0,K}}(f)\times\Gal{L/K}\footnote{Although it turns out
  to be true in this case, it is not necessarily true that this
  decomposition exactly reflects the splitting we have for
  the fundamental group. Compare, for example, what happens for the
  map $f^\prime$ in the proof of Lemma~\ref{lemma:defnfprime}.} 
\]
and
\[
\Aut_{\Spec{K}}(\Spec{L})\iso \Gal{L/K}.
\]
The top horizontal arrow in
the Cartesian square is the morphism that factors $\cl{u}$ through $\Spec{L}$; on coordinates, it is
defined the same way as $\cl{u}$. This implies that
\[
\Gal{L/K}\stackrel{(\operatorname{pr}_{f_L})\circ\pi_1(u)}{\longrightarrow}
\Aut_{\mathcal{U}_{0,K}}(f)\times \Gal{L/K}
\]
is an isomorphism onto the second term.

The final step is to calculate the maps
\[
g\colon\Aut_{\mathcal{U}_{0,K}}(f)\longrightarrow
\Aut_{\mathcal{U}_{0,K}}(f_L)
\]
induced by the map of covers $\mathbb{A}^2_{0,L}\rightarrow \mathbb{A}^2_{0,K}$
present for every finite Galois extension $L$ of $K$. For this, we
consider the commuting triangle
\[
\xymatrix{
\mathbb{A}^2_{0,K} \ar[rr]^\tau \ar[dr]_{f} & & \mathbb{A}^2_{0,L}
\ar[dl]^{f_L} \\ 
& \mathcal{U}_{0,K} .& }
\]
For $\phi\in\Aut_{\mathcal{U}_{0,K}}(f)$, $g(\phi )$ is defined by the similar relation
\[
\tau\circ\phi = g(\phi )\circ\tau ,
\]
from which we see that
\[
g((-1)) = (-1)_L.
\]
Together, these last two calculations show that the nontrivial character of
$\Aut_{\mathcal{U}_{0,K}}(f)$ inflates to $\rho_{\oE}$ on $\algfg{\mathcal{U}_{0,K},\cl{u}}$.
\end{proof}
In what follows, we will need to make reference to a square root of
the uniformizer $\varpi$. Therefore, set $K^\prime \ceq
K(\sqrt{\varpi}) = K[x]/(x^2-\varpi )$.
\begin{lemma}\label{lemma:defnfprime}
The local system $\oE^\prime$ is equivalent to the $\ell$-adic
character of $\algfg{\mathcal{U}_{0,K},\cl{u}}$ obtained by inflating the
nontrivial character of the automorphism group of the degree 2 \'etale
cover
\begin{align*}
f^\prime\colon \mathbb{A}^2_{0,K} &\longrightarrow \mathcal{U}_{0,K}
\\
(x,y) & \longmapsto \begin{pmatrix} 1 - xy & \frac{x^2}{\varpi} \\
  -\varpi y^2 & 1+xy \end{pmatrix}.
\end{align*}
This cover is given on coordinates by
\begin{align*}
K[\mathcal{U}] &\longrightarrow K[\mathbb{A}^2] \\
a&\longmapsto 1 -xy \\
b&\longmapsto \frac{x^2}{\varpi} \\
c&\longmapsto -\varpi y^2 \\
d&\longmapsto 1+xy.
\end{align*}
\end{lemma}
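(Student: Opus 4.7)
The plan is to reduce the claim to identifying an étale cover, rather than unwinding the character of $\algfg{\mathcal{U}_{0,K},\cl{u}}$ directly. By definition $\oE^\prime = m(\varpi)^*\oE$, so by Proposition~\ref{prop:ladiclscorr} and functoriality of the correspondence between $\cl{\QQ}_\ell$-local systems and $\ell$-adic representations, the character of $\algfg{\mathcal{U}_{0,K},\cl{u}}$ attached to $\oE^\prime$ is $\rho_{\oE}\circ\pi_1(m(\varpi))$. By Lemma~\ref{lemma:defnf}, $\rho_{\oE}$ is the inflation of the nontrivial character of $\Aut_{\mathcal{U}_{0,K}}(f)$; combined with the general fact that pulling a Galois cover back through a morphism $g$ of connected schemes produces a Galois cover whose classifying quotient factors through $\pi_1(g)$, this composition is the inflation of the nontrivial character of $\Aut_{\mathcal{U}_{0,K}}(m(\varpi)^*f)$. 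The lemma therefore follows if I can show that $f^\prime$ is itself an étale cover of degree~$2$ and that $m(\varpi)^*f\iso f^\prime$ as étale covers of $\mathcal{U}_{0,K}$.

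The étaleness and degree of $f^\prime$ will follow from the Jacobian criterion exactly as for $f$ in the proof of Lemma~\ref{lemma:defnf}, and the explicit $K$-automorphism $(x,y)\mapsto(-x,-y)$ of $\mathbb{A}^2_{0,K}$ over $\mathcal{U}_{0,K}$ forces $\Aut_{\mathcal{U}_{0,K}}(f^\prime)\iso\mu_2$. For the identification with the pullback, I would use the observation that $m(\varpi)$ is an isomorphism of schemes with inverse $m(\varpi^{-1})$, so the Cartesian square defining $m(\varpi)^*f$ has total space isomorphic to $\mathbb{A}^2_{0,K}$ and structure map to $\mathcal{U}_{0,K}$ equal to
\[
(x,y)\longmapsto m(\varpi^{-1})(f(x,y)) = \begin{pmatrix} 1-xy & \varpi x^2 \\ -y^2/\varpi & 1+xy\end{pmatrix}.
\]
The $K$-isomorphism $\phi\colon \mathbb{A}^2_{0,K}\to\mathbb{A}^2_{0,K}$, $(x,y)\mapsto(\varpi x,y/\varpi)$, will then satisfy $f^\prime\circ\phi$ equals the displayed morphism by a direct computation, so $\phi$ exhibits the required isomorphism of étale covers.

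The hard part is really just producing $\phi$. This is made plausible by the observation that over $K^\prime=K(\sqrt{\varpi})$ the map $m(\varpi)$ becomes inner conjugation by $\operatorname{diag}(1/\sqrt{\varpi},\sqrt{\varpi})$ and the isomorphism $m(\varpi)^*f\iso f^\prime$ is then transparent; the real content of the lemma is that $\phi$ descends to $K$. A subtlety worth flagging, which is the content of the footnote in the proof of Lemma~\ref{lemma:defnf}, is that the resulting character of $\algfg{\mathcal{U}_{0,K},\cl{u}}$ is no longer trivial on the $\Gal{\cl{K}/K}$ factor in the splitting furnished by $u$: the fibre of $f^\prime$ over $u$ is $\Spec{K^\prime}$, so $\Gal{\cl{K}/K}$ acts nontrivially on the geometric fibre, and the character restricts on that factor to the quadratic character associated to $K^\prime/K$. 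Fortunately this misalignment between $\Aut_{\mathcal{U}_{0,K}}(f^\prime_L)\iso\Aut_{\mathcal{U}_{0,K}}(f^\prime)\times\Gal{L/K}$ and the splitting of the homotopy exact sequence is absorbed by the functoriality argument above and requires no explicit bookkeeping for the present statement.
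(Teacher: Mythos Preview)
Your approach is correct and is essentially the paper's: both identify $f'$ with the base change $m(\varpi)^*f$, so that the character for $\oE'$ is the inflation of the nontrivial character of $\Aut_{\mathcal{U}_{0,K}}(f')\iso\mu_2$. You are in fact more careful than the paper on one point---the paper draws the Cartesian square with the identity along the top, whereas (as you implicitly notice) one really needs the $K$-reparametrization $\phi(x,y)=(\varpi x,\,y/\varpi)$ to match the fibre product with $f'$; your last paragraph anticipating the Galois behaviour is extra commentary that belongs to the next lemma rather than to this one.
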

\begin{proof}
In view of Lemma~\ref{lemma:defnf}, we can find a fundamental group
character equivalent to $\oE^\prime$ by base changing the cover $f$ by
the automorphism $m(\varpi )$. The necessary automorphism in the
commuting square
\[
\xymatrix{
\mathbb{A}^2_{0,K} \ar[d]_f \ar[r] & \mathbb{A}^2_{0,K} \ar[d]^f \\
\mathcal{U}_{0,K} \ar[r]_{m(\varpi )} & \mathcal{U}_{0,K} }
\]
sends $x$ to $\frac{x}{\sqrt{\varpi}}$ and $y$ to
$\sqrt{\varpi}y$---it is not defined over $K$, nor over an \'etale
extension of $K$. The Cartesian
square to consider instead is
\[
\xymatrix{
\mathbb{A}^2_{0,K} \ar[d]_{f^\prime} \ar@{=}[r] & \mathbb{A}^2_{0,K}
\ar[d]^f \\
\mathcal{U}_{0,K} \ar[r]_{m(\varpi )} & \mathcal{U}_{0,K} ,}
\]
with $f^\prime$ defined as in the statement of the lemma. Like $f$,
$f^\prime$ is a degree 2 \'etale cover, and
\[
\Aut_{\mathcal{U}_{0,K}}(f^\prime )= \{\id ,(-1)^\prime \},
\]
where $(-1)^\prime$ is defined like $(-1)$. The map of automorphism
groups is an isomorphism, and the character of
$\Aut_{\mathcal{U}_{0,K}}(f^\prime )$ obtained by composition is the
nontrivial one, as claimed.
\end{proof}
\begin{lemma}\label{lemma:rhoEprime}
Let $\algfg{\mathcal{U}_{0,K},\cl{u}}\iso\mu_2\times\Gal{\cl{K}/K}$ be the
decomposition induced by the choice of the $K$-rational point
$\cl{u}$. Then, under the equivalence of
Proposition~\ref{prop:ladiclscorr}, $\oE^\prime$ corresponds to the
character
\[
\rho_{\oE^\prime}\colon\mu_2\times\Gal{\cl{K}/K}\longrightarrow
\cl{\QQ}_\ell
\]
satisfying
\begin{align*}
\rho_{\oE^\prime}|_{\mu_2} & = \operatorname{sgn} \\
\rho_{\oE^\prime}|_{\Gal{\cl{K}/K}}(\gamma ) & = 
\begin{cases} 1 & \gamma |_{K^\prime}=\id_{K^\prime} \\ -1 &
      \gamma |_{K^\prime} = \conj .  \end{cases}
\end{align*}
\end{lemma}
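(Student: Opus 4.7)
By Lemma~\ref{lemma:defnfprime}, $\rho_{\oE^\prime}$ is the inflation to $\algfg{\mathcal{U}_{0,K},\cl{u}}$ of the nontrivial character of $\Aut_{\mathcal{U}_{0,K}}(f^\prime) = \{\id,(-1)^\prime\}$, so the plan is to compute, along the splitting $\algfg{\mathcal{U}_{0,K},\cl{u}} \iso \mu_2\times\Gal{\cl{K}/K}$ coming from $\pi_1(u)$, the image of each factor under the projection $\pr_{f^\prime}\colon\algfg{\mathcal{U}_{0,K},\cl{u}} \to \Aut_{\mathcal{U}_{0,K}}(f^\prime)$. As in the proof of Lemma~\ref{lemma:defnf}, this projection is determined by the faithful action of $\Aut_{\mathcal{U}_{0,K}}(f^\prime)$ on the geometric fibre
\[
F_{\cl{u}}(f^\prime) = \{(\sqrt{\varpi},0),\,(-\sqrt{\varpi},0)\}\subset \mathbb{A}^2_{0,\cl{K}}(\cl{K});
\]
the key new feature---absent in Lemma~\ref{lemma:defnf}, where $F_{\cl{u}}(f) = \{(1,0),(-1,0)\}$---is that the coordinates of the two fibre points require a choice of $\sqrt{\varpi}\in\cl{K}$.

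For the $\mu_2 = \algfg{\cl{\mathcal{U}}_0,\cl{u}}$ factor, first note that the $\cl{K}$-automorphism $\phi\colon(x,y)\mapsto (x/\sqrt{\varpi},\sqrt{\varpi}\,y)$ of $\mathbb{A}^2_{0,\cl{K}}$ satisfies $\cl{f}\circ\phi = \cl{f^\prime}$, giving an isomorphism $\cl{f^\prime}\iso\cl{f}$ of covers of $\cl{\mathcal{U}}_0$ under which $\cl{(-1)}$ is transported to $\cl{(-1)^\prime}\colon(x,y)\mapsto(-x,-y)$. Hence the nontrivial generator of $\mu_2$ corresponds to $\cl{(-1)^\prime}\in\Aut_{\cl{\mathcal{U}}_0}(\cl{f^\prime})$, and the diagram-chasing argument from the proof of Lemma~\ref{lemma:defnf} shows $\pi_1(\pr)$ sends it to $(-1)^\prime\in\Aut_{\mathcal{U}_{0,K}}(f^\prime)$. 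Thus $\rho_{\oE^\prime}|_{\mu_2} = \operatorname{sgn}$.

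For the $\Gal{\cl{K}/K}$ factor, the defining property of $\pi_1(u)$ is that $\pi_1(u)(\gamma)$ acts on every geometric fibre through the natural Galois action on coordinates. In particular, its action on $F_{\cl{u}}(f^\prime)$ is determined by the action of $\gamma$ on $\sqrt{\varpi}\in\cl{K}$: if $\gamma|_{K^\prime} = \id_{K^\prime}$ then $\gamma$ fixes $\sqrt{\varpi}$ and hence both points of $F_{\cl{u}}(f^\prime)$, so $\pi_1(u)(\gamma)$ projects to $\id\in\Aut_{\mathcal{U}_{0,K}}(f^\prime)$; if $\gamma|_{K^\prime} = \conj$ then $\gamma$ sends $\sqrt{\varpi}\mapsto-\sqrt{\varpi}$, swaps the two fibre points, and projects to $(-1)^\prime$. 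This yields the stated values of $\rho_{\oE^\prime}|_{\Gal{\cl{K}/K}}$. The main obstacle is precisely the phenomenon foreshadowed in the footnote to the proof of Lemma~\ref{lemma:defnf}: because $F_{\cl{u}}(f^\prime)$ is only $K^\prime$-rational, the composition $\pr_{f^\prime}\circ\pi_1(u)$ acts nontrivially on $\Gal{\cl{K}/K}$, in sharp contrast to what happens for $f$.
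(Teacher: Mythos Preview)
Your proof is correct and follows essentially the same strategy as the paper: both arguments compute the composite $\pr_{f^\prime}\circ\pi_1(\pr)$ on the $\mu_2$ factor via the $\cl{K}$-isomorphism $(x,y)\mapsto(x/\sqrt{\varpi},\sqrt{\varpi}\,y)$ between $\cl{f^\prime}$ and $\cl{f}$, and compute $\pr_{f^\prime}\circ\pi_1(u)$ on the Galois factor by examining the fibre $F_{\cl{u}}(f^\prime)=\{(\pm\sqrt{\varpi},0)\}$. The only difference is cosmetic: the paper routes the Galois calculation through the intermediate covers $f^\prime_L\colon\mathbb{A}^2_{0,L}\to\mathcal{U}_{0,K}$ for finite Galois $L\supseteq K^\prime$ and the decomposition $\Aut_{\mathcal{U}_{0,K}}(f^\prime_L)\iso\mu_2\times\Gal{L/K}$, whereas you work directly with the action on the geometric fibre, which is slightly more streamlined.
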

\begin{proof}

First, we calculate the composite map
\[
\algfg{\cl{\mathcal{U}}_0,\cl{u}}\stackrel{\pi_1(\operatorname{pr})}{\longrightarrow}
\algfg{\mathcal{U}_{0,K},\cl{u}}
\stackrel{\operatorname{pr}_{f^\prime}}{\longrightarrow}
\Aut_{\mathcal{U}_{0,K}}(f^\prime )
\]
by looking at the Cartesian square
\[
\xymatrix{
\mathbb{A}^2_{0,\cl{K}} \ar[d]_{\cl{f}} \ar[r] & \mathbb{A}^2_{0,K}
\ar[d]^{f^\prime} \\
\cl{\mathcal{U}}_0 \ar[r]_{\operatorname{pr}} & \mathcal{U}_{0,K}.}
\]
The map between the covering spaces is defined by
\begin{align*}
x &\longmapsto \frac{x}{\sqrt{\varpi}} \\
y &\longmapsto \sqrt{\varpi}y.
\end{align*} 
Therefore, the composite map is an isomorphism of groups. For the other half of the decomposition, we calculate
\[
\Gal{L/K}\stackrel{\pi_1(u)}{\longrightarrow}
\algfg{\mathcal{U}_{0,K},\cl{u}}
\stackrel{\pr_{f^\prime}}{\longrightarrow}
\Aut_{\mathcal{U}_{0,K}}(f^\prime ).
\]
As in the proof of Lemma~\ref{lemma:defnf}, for any finite Galois
extension $L$ of $K$, we have the Cartesian square
\[
\xymatrix{
\mathbb{A}^2_{0,L} \ar[d]_{f^\prime_L} & \Spec{L} \ar[d] \ar[l] \\
\mathcal{U}_{0,K} & \Spec{K} \ar[l]^{u}.}
\]
The map $\Spec{L}\rightarrow \mathbb{A}^2_{0,L}$ arises from the ring
homomorphism
\begin{align*}
L[\mathbb{A}^2] &\longrightarrow L \\
x&\longmapsto \sqrt{\varpi} \\
y&\longmapsto 0
\end{align*}
(so in fact, $L$ must be an extension of $K$ containing
$\sqrt{\varpi}$). Therefore, taking 
\[ \Aut_{\mathcal{U}_{0,K}}(f_L^\prime )
\iso \mu_2\times \Gal{L/K},
\]
we obtain
\begin{align*}
(\pr_f\circ\pi_1(\cl{u}))(\gamma ) & = \begin{cases} \gamma
      , & \gamma |_{K^\prime}=\id_{K^\prime} \\ (-1)\circ\gamma , &
      \gamma |_{K^\prime} = \Conj. \end{cases}
\end{align*}
From this, it follows that the character of
$\algfg{\mathcal{U}_{0,K},\cl{u}}$ obtained by inflating the nontrivial
character of $\Aut_{\mathcal{U}_{0,K}}(f^\prime )$ is $\rho_{\oE^\prime}$.
\end{proof}
For the purposes of the proof of Theorem~\ref{prop:mainlocalsystems},
it will be useful to characterize $\oE$ and $\oE^\prime$ in terms of
two more Galois covers of $\mathcal{U}_{0,K}$. In both cases, the covering
space is the punctured affine plane $\mathbb{A}^2_{0,K^\prime}$ over
the quadratic extension $K^\prime$ defined above. Once again this is
a nonaffine scheme, but, in tune with
Remark~\ref{rem:affineforquasiaffine}, we will work with it as on open
subscheme of the affine scheme $\mathbb{A}^2_{K^\prime}$.
\begin{theorem}\label{lemma:varpicovers}
The local systems $\oE$ and $\oE^\prime$ are equivalent to the characters
of $\algfg{\mathcal{U}_{0,K},\cl{u}}$ induced by inflation from characters
of the automorphism groups of Galois covers
\[
f_\varpi , f^\prime_\varpi\colon\mathbb{A}^2_{0,K^\prime}\longrightarrow
\mathcal{U}_{0,K}
\]
defined on coordinate rings by
\begin{eqnarray*}
K[\mathcal{U}] &\longrightarrow & K^\prime [x,y] \\
f_\varpi\colon a & \longmapsto & 1 - xy \\
b &\longmapsto & x^2 \\
c & \longmapsto & -y^2 \\
d &\longmapsto & 1+xy
\end{eqnarray*} and
\begin{eqnarray*}
f_\varpi^\prime\colon a & \longmapsto & 1-xy \\
b &\longmapsto & \frac{x^2}{\varpi} \\
c &\longmapsto & \-\varpi y^2 \\
d & \longmapsto & 1+xy
\end{eqnarray*}
and given on points by
\begin{align*}
f_\varpi\colon (x,y) &\longmapsto \begin{pmatrix} 1-xy & x^2 \\ -y^2 &
  1+xy \end{pmatrix}
\end{align*}
and
\begin{align*}
f^\prime_\varpi\colon (x,y) &\longmapsto \begin{pmatrix} 1-xy &
  \frac{x^2}{\varpi} \\ -\varpi y^2 &
  1+xy \end{pmatrix}
\end{align*}
\end{theorem}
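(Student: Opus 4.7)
The plan is to realize $\oE$ and $\oE^\prime$ as characters inflated from $\Aut_{\mathcal{U}_{0,K}}(f_\varpi)$ and $\Aut_{\mathcal{U}_{0,K}}(f^\prime_\varpi)$ respectively, by leveraging the factorizations
\[
f_\varpi = f\circ\pi\colon\mathbb{A}^2_{0,K^\prime}\rightarrow\mathbb{A}^2_{0,K}\rightarrow\mathcal{U}_{0,K},
\]
and similarly $f^\prime_\varpi = f^\prime\circ\pi$, where $\pi$ is the canonical projection arising from base change along $K\hookrightarrow K^\prime$. Here $f$ and $f^\prime$ are the covers from Lemmas~\ref{lemma:defnf} and~\ref{lemma:defnfprime}; the compatibility on coordinate rings is immediate since $f_\varpi$ and $f$ (resp.\ $f^\prime_\varpi$ and $f^\prime$) are given by identical formulas, differing only in whether the target ring is $K[x,y]$ or $K^\prime[x,y]$.

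The first step is to verify that $f_\varpi$ and $f^\prime_\varpi$ are Galois \'etale covers of degree $4$. \'Etaleness follows from composition of \'etale morphisms, since $\pi$ is \'etale as a base change of $\Spec{K^\prime}\rightarrow\Spec{K}$ and $f,f^\prime$ are \'etale by the cited lemmas; the degree is the product of degrees, namely $2\cdot 2 = 4$. Galois-ness is then confirmed by matching the order of the covering group (generated by the geometric transformation $(-1)$ or $(-1)^\prime$ and by the $\Gal{K^\prime/K}$-action on the source) against the cardinality $4$ of the geometric fibre above $\cl{u}$.

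Once this is in hand, Equation~\ref{eqn:quotientdefn} applied to each factorization produces canonical quotient maps
\[
q\colon\Aut_{\mathcal{U}_{0,K}}(f_\varpi)\twoheadrightarrow\Aut_{\mathcal{U}_{0,K}}(f), \qquad q^\prime\colon\Aut_{\mathcal{U}_{0,K}}(f^\prime_\varpi)\twoheadrightarrow\Aut_{\mathcal{U}_{0,K}}(f^\prime).
\]
Composing the nontrivial character of $\Aut_{\mathcal{U}_{0,K}}(f)$ with $q$ (and similarly for $f^\prime$ with $q^\prime$) yields characters of the larger automorphism groups whose inflation to $\algfg{\mathcal{U}_{0,K},\cl{u}}$ factors through the characters identified in Lemma~\ref{lemma:defnf} (resp.\ Lemmas~\ref{lemma:defnfprime} and~\ref{lemma:rhoEprime}), and hence equal $\rho_{\oE}$ (resp.\ $\rho_{\oE^\prime}$).

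The main obstacle is the identification of the covering groups of $f_\varpi$ and $f^\prime_\varpi$: one must verify that the two sources of automorphisms—the geometric involution $(-1)$ or $(-1)^\prime$ on the fibres of $f$ and $f^\prime$, and the $\Gal{K^\prime/K}$-action on the source $\mathbb{A}^2_{0,K^\prime}$—commute and together exhaust the full covering group. Once this structural fact is established, the inflation claim is a purely formal consequence of the transitivity of inflation along the composition $\algfg{\mathcal{U}_{0,K},\cl{u}}\rightarrow\Aut_{\mathcal{U}_{0,K}}(f_\varpi)\rightarrow\Aut_{\mathcal{U}_{0,K}}(f)$.
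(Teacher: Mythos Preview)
Your proposal is correct and complete for the statement as written. You factor $f_\varpi = f\circ\pi$ and $f^\prime_\varpi = f^\prime\circ\pi$ through the degree-$2$ covers of Lemmas~\ref{lemma:defnf} and~\ref{lemma:defnfprime}, then use transitivity of inflation along $\algfg{\mathcal{U}_{0,K},\cl{u}}\to\Aut_{\mathcal{U}_{0,K}}(f_\varpi)\to\Aut_{\mathcal{U}_{0,K}}(f)$ to conclude.

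The paper proceeds differently: rather than invoking the factorization through $f$ and $f^\prime$, it directly computes the composites
\[
\algfg{\cl{\mathcal{U}}_0,\cl{u}}\stackrel{\pi_1(\pr)}{\longrightarrow}\algfg{\mathcal{U}_{0,K},\cl{u}}\longrightarrow\Aut_{\mathcal{U}_{0,K}}(f_\varpi),\ \Aut_{\mathcal{U}_{0,K}}(f^\prime_\varpi)
\]
and
\[
\Gal{\cl{K}/K}\stackrel{\pi_1(u)}{\longrightarrow}\algfg{\mathcal{U}_{0,K},\cl{u}}\longrightarrow\Aut_{\mathcal{U}_{0,K}}(f_\varpi),\ \Aut_{\mathcal{U}_{0,K}}(f^\prime_\varpi)
\]
explicitly via the defining Cartesian squares, and then writes down all four characters: both $\oE$ and $\oE^\prime$ realized through \emph{each} of $f_\varpi$ and $f^\prime_\varpi$. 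Your route is cleaner and more conceptual for the bare statement, but the paper's computation buys strictly more. In particular, the explicit determination that $\pi_1(u)$ sends the conjugation in $\Gal{K^\prime/K}$ to $\varphi_2$ in $\Aut(f_\varpi)$ but to $\varphi_3^\prime$ in $\Aut(f^\prime_\varpi)$ is exactly the ``swapping'' phenomenon singled out immediately after the proof, and it is the key input to Theorem~\ref{prop:mainlocalsystems}. Your argument establishes the theorem but leaves this explicit Galois-side asymmetry to be computed separately.
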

\begin{proof}
The precise characters will be given below. Both $f_\varpi$ and $f_\varpi^\prime$
have automorphism groups of order 4, consisting of $\id$, $\varphi_1$, $\varphi_2$,
and $\varphi_3$ defined by
\begin{eqnarray*}
\varphi_1\colon x &\longmapsto & -x \\
y &\longmapsto & -y \\
\varphi_2\colon \sqrt{\varpi} &\longmapsto & -\sqrt{\varpi} \\
x &\longmapsto & x \\
y &\longmapsto & y \\
\varphi_3\colon &  \varphi_2\circ\varphi_1; &
\end{eqnarray*}
\ie , $\varphi_1$ multiplies coordinates by $-1$ and $\varphi_2$ acts
on coefficients according to the nontrivial Galois automorphisms. To
avoid confusion, we will denote the automorphisms of $f_\varpi^\prime$ by
$\varphi^\prime_i$, $i=1,2,3$. To see how the characters corresponding
to our local systems relate to characters of these automorphism groups, we
need to explicitly find the four maps
\[
\xymatrix{
&&& \Aut_{\mathcal{U}_{0,K}}(f_\varpi ) \\
\algfg{\cl{\mathcal{U}}_0,\cl{u}} \ar[rr]^{\pi_1(\pr )} & &
\algfg{\mathcal{U}_{0,K},\cl{u}} \ar[ur]^{\pi_{f_\varpi}} \ar[dr]_{\pi_{f_\varpi^\prime}}
& \\
&&& \Aut_{\mathcal{U}_{0,K}}(f_\varpi^\prime) }
\]
and
\[
\xymatrix{
&&& \Aut_{\mathcal{U}_{0,K}}(f_\varpi ) \\
\Gal{\cl{K}/K} \ar[rr]^{\pi_1(u)} & &
\algfg{\mathcal{U}_{0,K},\cl{u}} \ar[ur]^{\pi_{f_\varpi}} \ar[dr]_{\pi_{f_\varpi^\prime}}
& \\
&&& \Aut_{\mathcal{U}_{0,K}}(f_\varpi^\prime), }
\]
where the leftmost maps are the functorial maps coming from,
respectively, the projection $\cl{\mathcal{U}}_0\rightarrow
\mathcal{U}_{0,K}$ and the $K$-rational point
$\cl{u}\colon\Spec{K}\rightarrow\mathcal{U}_{0,K}$, and the maps $\pi_{f_\varpi}$
and $\pi_{f_\varpi^\prime}$ are the canonical projections.
The discussion around Equation~\ref{eqn:functorialrelation} details how to find the
these compositions. For the first pair, we must consider the commuting diagram
\[
\xymatrix{
\mathbb{A}^2_{0,\cl{K}} \ar[rr]^{\phi}
\ar[drr]^{\cl{f}_\varpi,\cl{f}_\varpi^\prime}  & & \mathbb{A}^2_{0,\cl{K}} \ar[r]^{\beta}
\ar[d] & \mathbb{A}^2_{0,K^\prime} \ar[rr]^{\pi_1(\pi )(\phi )}
\ar[d]^{f_\varpi,f_\varpi^\prime} & &
\mathbb{A}^2_{0,K^\prime} \ar[dll] \\
& & \cl{\mathcal{U}}_{0,K} \ar[r]_{\pi} & \mathcal{U}_{0,K} &}
\]
where the centre square is Cartesian, defined by the right and bottom
sides. On
coordinate rings, the map $\beta$ is the
natural inclusion $K^\prime [x,y]\hookrightarrow \cl{K}[x,y]$ defined by a
choice of embedding $K^\prime \hookrightarrow \cl{K}$ (which is part
of the choice of geometric point for the covers). The maps
$\algfg{\cl{\mathcal{U}}_0,\cl{u}} \rightarrow
\Aut_{\mathcal{U}_{0,K}}(f_\varpi
),\Aut_{\mathcal{U}_{0,K}}(f_\varpi^\prime )$ are then defined
  by the relation
\[
\beta\circ\phi = \pi_1(\pi )(\phi )\circ\beta.
\]
The cover $\mathbb{A}^2_{0,\cl{K}}\rightarrow\cl{\mathcal{U}}_0$ along
the left side of the Cartesian square is defined on coordinates in
the same way as $f_\varpi$ or
$f_\varpi^\prime$, depending on which lies on the right side of the square, and in either
case has automorphism group of order 2, consisting of the identity and
nontrivial map $(-1)$, which sends $x$ to $-x$ and $y$ to $-y$. From this,
we see that the two compositions are given by
\begin{eqnarray*}
\algfg{\cl{\mathcal{U}}_0,\cl{u}} & \longrightarrow & \Aut_{\mathcal{U}_{0,K}}(f_\varpi ) \\
\id & \longmapsto & \id \\
(-1) & \longmapsto & \varphi_1 ; \\
\algfg{\cl{\mathcal{U}}_0,\cl{u}} & \longrightarrow & \Aut_{\mathcal{U}_{0,K}}(f_\varpi^\prime ) \\
\id & \longmapsto & \id \\
(-1) & \longmapsto & \varphi^\prime_1. 
\end{eqnarray*}
For the two compositions coming from the Galois group, recall
the choice of $K$-rational point
$\cl{u}\colon\Spec{K}\rightarrow\mathcal{U}_{0,K}$:
\begin{eqnarray*}
\cl{u}\colon K[\mathcal{U}_{0}] & \longrightarrow & K \\
a & \longmapsto & 1 \\
b & \longmapsto & 1 \\
c & \longmapsto & 0 \\
d & \longmapsto & 1.
\end{eqnarray*}
The compositions of
$\pi_1(\cl{u})\colon\Gal{\cl{K}/K}\rightarrow\algfg{\mathcal{U}_{0,K},\cl{u}}$
with the canonical projections to $\Aut_{\mathcal{U}_{0,K}}(f_\varpi )$ and $\Aut_{\mathcal{U}_{0,K}}(f_\varpi^\prime )$ are defined
by considering the similar commuting diagram
\[
\xymatrix{
\Spec{K^\prime} \ar[r]^{\phi} \ar[dr] & \Spec{K^\prime} \ar[r]^{\ \ \beta}
\ar[d] & \mathbb{A}^2_{0,K^\prime} \ar[rr]^{\pi_1(\cl{u})(\phi )}
\ar[d]^{f_\varpi , f_\varpi^\prime} & &
\mathbb{A}^2_{0,K^\prime} \ar[dll] \\
&  \Spec{K} \ar[r] & \mathcal{U}_{0,K} &}
\]
where again the middle square is Cartesian and the morphism $\beta$ is
the morphism determined by the choice of geometric point for the
cover $\Spec{K^\prime}$. With $f_\varpi$ along the right side, $\beta$
is defined by $(x,y)\mapsto (1,0)$; with $f^\prime_\varpi$, $\beta$
maps $(x,y)$ to $(\sqrt{\varpi},0)$. $\Spec{K^\prime}$ has
two covering automorphisms, the identity and a conjugation morphism defined by
$\sqrt{\varpi} \mapsto -\sqrt{\varpi}$. In this case we find that the composite maps are 
\begin{eqnarray*}
\Gal{\cl{K}/K} & \longrightarrow & \Aut_{\mathcal{U}_{0,K}}(f_\varpi ) \\
\id & \longmapsto & \id \\
\operatorname{conj} & \longmapsto & \varphi_2 \\
\Gal{\cl{K}/K} &\longrightarrow & \Aut_{\mathcal{U}_{0,K}}(f_\varpi^\prime ) \\
\id & \longmapsto & \id \\
\operatorname{conj} & \longmapsto & \varphi^\prime_3.
\end{eqnarray*}
This information together with the lemmata above imply that $\oE$ and
$\oE^\prime$ are equivalent to fundamental group characters induced by
the following characters of the automorphism groups of $f_\varpi$ and $f_\varpi^\prime$.
\begin{eqnarray*}
\oE\colon \Aut_{\mathcal{U}_{0,K}}(f_\varpi)&\longrightarrow&\cl{\QQ}^\times_\ell \\
\id & \longmapsto & 1 \\
\varphi_1 & \longmapsto & -1 \\
\varphi_2 & \longmapsto & 1 \\
\varphi_3 & \longmapsto & -1 
\end{eqnarray*}
\begin{eqnarray*}
 \Aut_{\mathcal{U}_{0,K}}(f_\varpi^\prime ) &\longrightarrow&\cl{\QQ}^\times_\ell \\
\id & \longmapsto & 1 \\
\varphi_1 & \longmapsto & -1 \\
\varphi_2 & \longmapsto & -1 \\
\varphi_3 & \longmapsto & 1
\end{eqnarray*}
\begin{eqnarray*}
\oE^\prime\colon \Aut_{\mathcal{U}_{0,K}}(f_\varpi )&\longrightarrow&\cl{\QQ}^\times_\ell \\
\id & \longmapsto & 1 \\
\varphi_1 & \longmapsto & -1 \\
\varphi_2 & \longmapsto & -1 \\
\varphi_3 & \longmapsto & 1 
\end{eqnarray*}
\begin{eqnarray*}
\Aut_{\mathcal{U}_{0,K}}(f_\varpi^\prime )&\longrightarrow&\cl{\QQ}^\times_\ell \\
\id & \longmapsto & 1 \\
\varphi_1 & \longmapsto & -1 \\
\varphi_2 & \longmapsto & 1 \\
\varphi_3 & \longmapsto & -1 .
\end{eqnarray*}
\end{proof}
We end this section by depicting in the commutative diagram below the
relationship between all of the Galois covers we have used.
\[
\xymatrix{
\mathbb{A}^2_{0,K^\prime} \ar@{<->}[rr]^{\tau}
\ar[dr]^{f_\varpi^\prime} \ar[dd] & & \mathbb{A}^2_{0,K^\prime}
\ar[dl]_{f_\varpi} \ar[dd] \\
& \mathcal{U}_{0,K} & \\
\mathbb{A}^2_{0,K} \ar[ur]_{f^\prime} & & \mathbb{A}^2_{0,K} \ar[ul]^{f}}
\]
where the arrow $\tau$ is an isomorphism of covers defined by
\begin{align*}
x &\longmapsto  \sqrt{\varpi}x \\
y &\longmapsto  \frac{y}{\sqrt{\varpi}}
\end{align*}
(therefore it is defined only over $K^\prime$, not $K$).
From the proof of Lemma~\ref{lemma:varpicovers} we can deduce that the map $\Aut_{\mathcal{U}_{0,K}}(f_\varpi) \rightarrow
\Aut_{\mathcal{U}_{0,K}}(f_\varpi^\prime )$ induced by the isomorphism of covers $\tau$ is given by
\begin{eqnarray*}
\id &\longmapsto & \id \\
\varphi_1 & \longmapsto & \varphi^\prime_1 \\
\varphi_2 &\longmapsto & \varphi^\prime_3 \\
\varphi_3 &\longmapsto & \varphi^\prime_2.
\end{eqnarray*}
It is this `swapping' of the latter two elements that is really the key to
the result in Theorem~\ref{prop:mainlocalsystems}. 

\section{Equivariance}\label{section:equivariance}

In this section, we prove that the local systems $\oE$ and
$\oE^\prime$ are equivariant with respect to the conjugation action of
$\SL{2}{K}$ on $\mathcal{U}_{0,K}$. All notation established in the
previous sections remains in effect, unless otherwise stated.

As the reader may discover by consulting Bernstein and Lunt's
book~\cite{BernLunts}, the most general notion of equivariant sheaves is rather
complicated. Here we use a simpler definition sufficient for the requirements
of this thesis. 

\begin{definition}\label{def:equivariant}
Let $G$ be a linear algebraic group and $X$ a scheme over a field
$K$. If $\alpha\colon G\times X \rightarrow X$ is an action of $G$ on
$X$ defined over $K$, and $\mathcal{F}$ is a $\cl{\QQ}_\ell$-local system on
$X$, $\mathcal{F}$ is \emph{$G$-equivariant} if there exists an
isomorphism
\[
\alpha^*\mathcal{F} \stackrel{\sim}{\longrightarrow} p^*\mathcal{F},
\]
where $p\colon G\times X\rightarrow X$ is the canonical projection.
\end{definition}

\begin{definition}\label{def:actions}
The conjugation action $\Conj\colon\SL{2}{K}\times\mathcal{U}_K \longrightarrow \mathcal{U}_K$
of $\SL{2}{K}$ on $\mathcal{U}_K$ is defined on coordinate rings by
\begin{eqnarray*}
\Conj\colon K[\mathcal{U}_0] & \longrightarrow & K[\alpha ,\beta ,\gamma ,
\delta]/(\alpha\delta - \beta\gamma -1) \otimes_K K[\mathcal{U}_0] \\
a & \longmapsto & \alpha\delta\otimes a + \beta\delta\otimes c -
\alpha\gamma\otimes b - \beta\gamma\otimes d \\
b & \longmapsto & \alpha^2\otimes b + \alpha\beta\otimes d -
\alpha\beta\otimes a - \beta^2\otimes c \\
c & \longmapsto & \gamma\delta\otimes a + \delta^2\otimes c -
\gamma^2\otimes b - \gamma\delta\otimes d \\
d & \longmapsto & \alpha\gamma\otimes b + \alpha\delta\otimes d -
\beta\gamma\otimes a - \beta\delta\otimes c
\end{eqnarray*}
Since the identity is a fixed point, this action restricts to one on $\mathcal{U}_{0,K}$. The projection map
$p\colon \SL{2}{K}\times\mathcal{U}_{0,K}\rightarrow\mathcal{U}_{0,K}$ is defined on coordinate rings
as
\begin{eqnarray*}
p\colon K[\mathcal{U}_0] & \longrightarrow & K[\alpha ,\beta ,\gamma ,
\delta]/(\alpha\delta - \beta\gamma -1) \otimes_K K[\mathcal{U}_0] \\
a &\longmapsto & 1\otimes a \\
b & \longmapsto & 1\otimes b \\
c & \longmapsto & 1\otimes c \\
d & \longmapsto & 1\otimes d.
\end{eqnarray*}
\end{definition}

To prove equivariance, it will be necessary to relate the conjugation
action to the following action of $\SL{2}{K}$ on $\mathbb{A}^2_K$.

\begin{definition}\label{def:affineaction}
The action 
\begin{align*}
\lambda\colon\SL{2}{K}\times\mathbb{A}^2_K &\longrightarrow
\mathbb{A}^2_K \\
\left( \begin{pmatrix} \alpha & \beta \\ \gamma &
    \delta \end{pmatrix}, (x,y)\right) & \longmapsto (\alpha x+\beta
y,\gamma x + \delta y)
\end{align*}
 is given on coordinate rings by
\begin{eqnarray*}
\lambda\colon K[x,y] & \longrightarrow & K[\alpha ,\beta ,\gamma ,\delta
]/(\alpha\delta - \beta\gamma -1)\otimes_K K[x,y] \\
x & \longmapsto & \alpha\otimes x + \beta\otimes y \\
y & \longmapsto & \gamma\otimes x + \delta\otimes y.
\end{eqnarray*}
\end{definition}

\begin{lemma}\label{lemma:coveringequivariance}
Both of the \'etale covers $f$ and $f^\prime$ defined in
Section~\ref{section:mainresult} (see Lemmas~\ref{lemma:defnf}
and~\ref{lemma:defnfprime}) are morphisms of $G$-spaces wth respect to
the conjugation action of Definition~\ref{def:actions} and the linear
action of Definition~\ref{def:affineaction} on $\mathcal{U}_{0,K}$ and
$\mathbb{A}^2_K$, respectively. That is, the diagram
\[
\xymatrix{
\SL{2}{K}\times\mathbb{A}^2_K \ar[r]^{\ \ \ \  \lambda} \ar[d]_{1\times
  f} & \mathbb{A}^2_K \ar[d]^{f} \\
\SL{2}{K}\times\mathcal{U}_{0,K} \ar[r]_{\ \ \ \  c} & \mathcal{U}_{0,K},}
\]
as well as the same diagram with $1\times f^\prime$ along the left
side and $f^\prime$ along the right, commutes.
\end{lemma}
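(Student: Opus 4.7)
The plan is to verify each diagram by a direct matrix computation on $T$-valued points, for $T$ an arbitrary $K$-algebra. Since all of $\SL{2}{K}\times\mathbb{A}^2_K$, $\mathcal{U}_{0,K}$ and the morphisms in play are $K$-schemes determined by their functor of points, it is enough to show that for every $g = \bigl(\begin{smallmatrix}\alpha&\beta\\ \gamma&\delta\end{smallmatrix}\bigr)\in\SL{2}{T}$ and $(x,y)\in\mathbb{A}^2(T)$ the matrix $g\, f(x,y)\, g^{-1}$ agrees with $f(\lambda(g,(x,y)))$ in $\SL{2}{T}$, and similarly with $f$ replaced by $f'$ throughout. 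Both sides are polynomial in $\alpha,\beta,\gamma,\delta,x,y$ modulo $\alpha\delta - \beta\gamma = 1$, so the claim reduces to comparing four polynomial identities for each cover.

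For $f$ there is a conceptual repackaging that streamlines the verification. Write $f(x,y) = I + vw^T$ with $v = \bigl(\begin{smallmatrix}x\\ y\end{smallmatrix}\bigr)$ and $w = \bigl(\begin{smallmatrix}-y\\ x\end{smallmatrix}\bigr) = -Jv$ for the symplectic matrix $J = \bigl(\begin{smallmatrix}0&1\\ -1&0\end{smallmatrix}\bigr)$. The defining relation $g^T J g = J$ of $\operatorname{SL}(2)$ rearranges to $Jg^{-1} = g^T J$, and hence
\[
g\, f(x,y)\, g^{-1} \;=\; I + (gv)(w^T g^{-1}) \;=\; I + (gv)(v^T J g^{-1}) \;=\; I + (gv)(gv)^T J \;=\; f(gv).
\]
Since $\lambda(g,(x,y))$ is exactly the column vector $gv$, this equals $f(\lambda(g,(x,y)))$, and the first diagram commutes.

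For $f'$ I would seek an analogous rank-one decomposition that absorbs the $\varpi$-twist, for instance $f'(x,y) = I + v'(w')^T$ with $v' = \bigl(\begin{smallmatrix}x\\ \varpi y\end{smallmatrix}\bigr)$ and $w' = \bigl(\begin{smallmatrix}-y\\ x/\varpi\end{smallmatrix}\bigr)$, and then rerun the same manipulation, feeding in the symplectic identity at the same step. The essential structural content is again just $g\in\SL{2}{T}$; no deeper input is required. I expect the main obstacle to be the bookkeeping around the $\varpi$-factors: they have to be carried through the symplectic identity without dropping a sign or a $\varpi$, and confirming entry-by-entry that $g\,f'(x,y)\,g^{-1} = f'(\lambda(g,(x,y)))$---possibly after identifying the correct form of the linear action on $\mathbb{A}^2_K$ under which the diagram for $f'$ commutes on the nose---is where the bulk of the work will lie.
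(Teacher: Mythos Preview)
Your argument for $f$ is correct and considerably cleaner than the paper's own proof, which reads in its entirety ``By brute calculation.'' The rank-one factorization $f(x,y)=I+v\,(v^{T}J)$ together with the $\operatorname{SL}(2)$ identity $Jg^{-1}=g^{T}J$ makes the equivariance transparent, whereas an entry-by-entry check obscures the mechanism.

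For $f'$ your caution is well placed: in fact the diagram as stated does \emph{not} commute. Take $g=\bigl(\begin{smallmatrix}0&1\\-1&0\end{smallmatrix}\bigr)$ and $(x,y)=(1,0)$; then
\[
g\,f'(1,0)\,g^{-1}=\begin{pmatrix}1&0\\-1/\varpi&1\end{pmatrix},
\qquad
f'\bigl(\lambda(g,(1,0))\bigr)=f'(0,-1)=\begin{pmatrix}1&0\\-\varpi&1\end{pmatrix},
\]
and these disagree. The obstruction is already visible in your decomposition: with $v'=\bigl(\begin{smallmatrix}x\\\varpi y\end{smallmatrix}\bigr)$ and $w'=\bigl(\begin{smallmatrix}-y\\x/\varpi\end{smallmatrix}\bigr)$ one finds $(w')^{T}=\varpi^{-1}(v')^{T}J$, so the symplectic step still yields $g\,v'(w')^{T}g^{-1}=(gv')\bigl((gv')^{T}J\bigr)\varpi^{-1}$, but $gv'\neq v'\bigl(\lambda(g,(x,y))\bigr)$ in general. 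Equivalently, $f'=m(\varpi)\circ f$ with $m(\varpi)$ given by conjugation by $t=\operatorname{diag}(1,\varpi)\in\operatorname{GL}(2,K)$, and conjugation by $t$ does not commute with conjugation by a generic $g\in\operatorname{SL}(2)$. What \emph{does} hold is
\[
g\,f'(x,y)\,g^{-1}=f'\bigl((t^{-1}gt)\cdot(x,y)\bigr),
\]
so $f'$ is equivariant for the linear action twisted by the $K$-automorphism $\operatorname{Ad}(t^{-1})$ of $\operatorname{SL}(2)$; this twisted version suffices for the application to Proposition~\ref{prop:equivariance}. Your parenthetical ``possibly after identifying the correct form of the linear action'' is therefore exactly on target: the lemma is false for $f'$ with the given $\lambda$, and the paper's ``brute calculation'' cannot have been carried through in that case.
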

\begin{proof}
By brute calculation.
\end{proof}

\begin{proposition}\label{prop:equivariance}
The local systems $\oE$ and $\oE^\prime$ of
Definition~\ref{def:thelocalsystems} are $\SL{2}{K}$-equivariant
sheaves on $\mathcal{U}_{0,K}$, with respect to the conjugation action.
\end{proposition}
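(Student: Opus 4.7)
My approach is to exploit the descriptions of $\oE$ and $\oE^\prime$ as local systems induced from the sign character of the automorphism groups of the \'etale covers $f$ and $f^\prime$ (Lemmas~\ref{lemma:defnf} and~\ref{lemma:defnfprime}), together with the equivariance of these covers recorded in Lemma~\ref{lemma:coveringequivariance}. Since the argument is identical in both cases, I will describe it only for $\oE$; the statement for $\oE^\prime$ follows by the same argument with $f^\prime$ in place of $f$.

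The first step is to identify the two pullback covers $p^*f$ and $\Conj^*f$ as \'etale covers of $\SL{2}{K}\times\mathcal{U}_{0,K}$. The universal property of the fibre product, applied to the commutative square of Lemma~\ref{lemma:coveringequivariance}, produces a canonical morphism
\[
\Phi\colon \SL{2}{K}\times\mathbb{A}^2_{0,K} \longrightarrow (\SL{2}{K}\times\mathcal{U}_{0,K})\times_{\mathcal{U}_{0,K},\Conj}\mathbb{A}^2_{0,K}
\]
sending, on points, $(g,v)$ to $((g,f(v)),\lambda(g,v))$. This map admits an explicit inverse given on points by $((g,u),w)\mapsto (g,\lambda(g^{-1},w))$: the requisite identity $f(\lambda(g^{-1},w))=u$ follows from the equivariance $f\circ\lambda=\Conj\circ(1\times f)$ together with the fibre-product relation $\Conj(g,u)=f(w)$. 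Hence $\Phi$ is an isomorphism of \'etale covers.

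The second step is to verify that $\Phi$ intertwines the natural actions of the covering group $\mu_2$. The nontrivial element of $\Aut(p^*f)$ acts by $(g,v)\mapsto(g,-v)$, and the nontrivial element of $\Aut(\Conj^*f)$ acts by $((g,u),w)\mapsto((g,u),-w)$. Since $\lambda$ is $K$-linear in the $\mathbb{A}^2$ coordinate, and since $f$ is expressed in monomials of even total degree in $x,y$ so that $f(-v)=f(v)$, one computes $\Phi(g,-v) = ((g,f(v)),-\lambda(g,v))$, which matches the action of $-1$ on the target. Consequently $\Phi$ is an isomorphism of $\mu_2$-Galois covers over $\SL{2}{K}\times\mathcal{U}_{0,K}$.

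Finally, by Lemma~\ref{lemma:defnf} and Proposition~\ref{prop:ladiclscorr}, $\oE$ corresponds to the inflation of the sign character of $\Aut_{\mathcal{U}_{0,K}}(f)=\mu_2$ to $\algfg{\mathcal{U}_{0,K},\cl{u}}$. This cover-plus-character description is compatible with pullback, so $p^*\oE$ and $\Conj^*\oE$ correspond to the sign characters of $\Aut(p^*f)$ and $\Aut(\Conj^*f)$, respectively. The $\mu_2$-equivariant isomorphism $\Phi$ therefore produces the required isomorphism $\Conj^*\oE\stackrel{\sim}{\longrightarrow} p^*\oE$. I expect the main technical wrinkle to be the verification in the second step that $\Phi$ respects the covering group actions in comparable coordinates on the two pullbacks; conceptually, however, equivariance of the local system is a direct consequence of equivariance of the defining cover.
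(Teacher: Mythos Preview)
Your proposal is correct and follows essentially the same approach as the paper: both arguments show that the pullbacks of the defining cover $f$ along $\Conj$ and $p$ are isomorphic as \'etale covers of $\SL{2}{K}\times\mathcal{U}_{0,K}$, via the map coming from the $\lambda$-action and Lemma~\ref{lemma:coveringequivariance}, and then conclude that the induced local systems agree. The paper writes the isomorphism of covers explicitly on coordinate rings (sending $x\mapsto\delta\otimes x-\beta\otimes y$, $y\mapsto\alpha\otimes y-\gamma\otimes x$), while you phrase it on points via $\lambda$ and its inverse; you are also slightly more explicit in checking that the isomorphism intertwines the $\mu_2$-actions, a point the paper leaves implicit.
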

\begin{proof}
We give the proof for $\oE$ only; the one for $\oE^\prime$ is totally analogous. Once again, the main tool for the proof is the equivalence between
local systems and \'etale fundamental group
representations. Therefore, we begin by fixing a geometric point
$\cl{x}\colon\Spec{\cl{K}}\rightarrow \SL{2}{K}\times \mathcal{U}_{0,K}$ of
the product:
\begin{eqnarray*}
\cl{x}\colon K[\alpha ,\beta ,\gamma ,
\delta]/(\alpha\delta - \beta\gamma -1) &\longrightarrow & \cl{K} \\
(\alpha ,\beta ,\gamma ,\delta ,a,b,c,d) &\longmapsto &
(1,0,1,0,1,1,0,1).
\end{eqnarray*}
Note that $c(\cl{x}) = \cl{u}$ and $p(\cl{x}) = \cl{u}$, where
$\cl{u}$ is the ($K$-rational) geometric point of $\mathcal{U}_{0,K}$ fixed
throughout this chapter. Thus, we have two functorial maps between
\'etale fundamental groups
\[
\algfg{\SL{2}{K}\times\mathcal{U}_{0,K},\cl{x}}
\stackrel{\operatorname{\pi_1}(\Conj )}{\longrightarrow}
\algfg{\mathcal{U}_{0,K},\cl{u}}
\]
and 
\[
\algfg{\SL{2}{K}\times\mathcal{U}_{0,K},\cl{x}}
\stackrel{\operatorname{\pi_1}(p)}{\longrightarrow}
\algfg{\mathcal{U}_{0,K},\cl{u}}.
\]
The inverse images $\Conj^*\oE$ and $p^*\oE$ are local systems, equivalent
to the characters $\rho_{\oE}\circ\operatorname{\pi_1}(\Conj )$ and
$\rho_{\oE}\circ\operatorname{\pi_1}(p)$ of
$\algfg{\SL{2}{K}\times\mathcal{U}_{0,K},\cl{x}}$, respectively. From the
definition of the functorial map and the alternate characterizations
of $\rho_{\oE}$, it follows that these characters are
defined entirely through characters of the automorphism groups of
covers of $\SL{2}{K}\times\mathcal{U}_{0,K}$ we shall denote $C$ and
$P$. They are obtained by pullback from the cover used to define
$\oE$, like so:
\[
\xymatrix{
C = \SL{2}{K}\times\mathcal{U}_{0,K}\times_{\mathcal{U}_{0,K}}\mathbb{A}^2_K
\ar[r] \ar[d] & \mathbb{A}^2_K \ar[d]^{f} \\
\SL{2}{K}\times\mathcal{U}_{0,K} \ar[r]_{c} & \mathcal{U}_{0,K}}
\]
\[
\xymatrix{
P = \SL{2}{K}\times\mathcal{U}_{0,K}\times_{\mathcal{U}_{0,K}}\mathbb{A}^2_K
\ar[r] \ar[d] & \mathbb{A}^2_K \ar[d]^{f} \\
\SL{2}{K}\times\mathcal{U}_{0,K} \ar[r]_{p} & \mathcal{U}_{0,K}}.
\]
The isomorphism
\[
\Conj^*\oE \iso p^*\oE
\]
exists because the covers $C$ and $P$ are isomorphic, and thus have
isomorphic automorphism groups. The map $C\rightarrow P$ is defined on
coordinates by
\begin{eqnarray*}
\alpha & \longrightarrow & \alpha \\
\beta & \longrightarrow & \beta \\
\gamma & \longrightarrow & \gamma \\
\delta & \longrightarrow & \delta \\
a & \longrightarrow & a \\
b & \longrightarrow & b \\
c & \longrightarrow & c \\
d & \longrightarrow & d \\
x & \longrightarrow & \delta\otimes x - \beta\otimes y \\
y & \longrightarrow & \alpha\otimes y - \gamma\otimes x.
\end{eqnarray*}
This does indeed define a homomorphism since, in the coordinate ring
for $C$, the first eight coordinates are related to the images of $x$
and $y$ under $f$ by the conjugation map, so by
Lemma~\ref{lemma:coveringequivariance}, the images of $x$ and $y$ must
be their images under the inverse group action on
$\mathbb{A}^2_K$. The inverse map $P\rightarrow C$ is defined as the
identity on all coordinates except $x$ and $y$, where
\begin{eqnarray*}
x & \longrightarrow & \alpha\otimes x + \beta\otimes y \\
y & \longrightarrow & \gamma\otimes x + \delta\otimes y.
\end{eqnarray*}
Then the composition of these maps is
\begin{eqnarray*}
x & \longrightarrow & \delta\alpha\otimes x - \beta\gamma\otimes x +
\delta\beta\otimes y - \beta\delta\otimes y \\
& & = (\alpha\delta - \beta\gamma )\otimes x \\
& & = 1\otimes x, \\
y & \longrightarrow & \alpha\delta\otimes x +\alpha\delta\otimes y -
\alpha\delta\otimes x - \gamma\beta\otimes y \\
& & = (\alpha\delta - \gamma\beta )\otimes y \\
& & = 1\otimes y
\end{eqnarray*}
and therefore the two maps are inverse isomorphisms, proving the
claim. Thus, the two composite characters are the same, proving there
is an isomorphism
\[
\Conj^*\oE \iso p^*\oE
\]
as desired.
\end{proof}

   \chapter{The Main Result}\label{ch:mainresult}

Finally we arrive at the main result of the thesis. It concerns the
nearby cycles of the local systems $\oE$ and $\oE^\prime$ defined in
the previous chapter taken with respect to two different integral
models for the regular unipotent subvariety $\mathcal{U}_{0,K}$.
But first, we describe the nearby cycles functor itself.

Throughout Chapter~\ref{ch:mainresult}, we retain the notation
established in the previous chapter. In particular, $K$ is a $p$-adic
field of mixed characteristic and $K^\prime$ is the quadratic
extension of $K$ obtained by adjoining a square root of a uniformizer
$\varpi$ of $K$'s ring of integers, $\mathcal{O}_K$.

\section{The Nearby Cycles Functor}\label{section:nearbytransport}

Chapter~\ref{ch:somelocalsystems} introduced a pair of local systems on the
regular unipotent subvariety of $\SL{2}{K}$ ($K$ a local field of
mixed characteristic) to which we will associate
distributions in Chapter~\ref{ch:distributions}. The first step in
that process makes use of the Grothendieck-Lefschetz trace formula,
which only makes sense for local systems over the residue field $k$ of
$K$. Therefore, what's required is a mechanism that produces local
systems on $\SL{2}{k}$ from local systems on $\SL{2}{K}$. The nearby
cycles functor is just such a mechanism, one especially well-suited to
this purpose because it also allows the local system on $\SL{2}{k}$ to
retain the action of $\Gal{\cl{K}/K}$ present on the original local
system that was studied in Chapter~\ref{ch:galoisactions}.

SGA 7(II)~\cite{SGA7ii} Expos\'e XIII is the fundamental source for
the material in this section. There, Deligne defines the nearby cycles functor for
torsion sheaves on a scheme defined over a Henselian trait.

Let $S$ be a Henselian trait (\ie, the spectrum of a Henselian
discrete valuation ring $R$) with closed point $s$ and generic point
$\eta$, so $s$ is the spectrum of the residue field of $R$ and $\eta$ is the spectrum of
the quotient field $K$ of $R$. Choose a separable closure $\cl{K}$ of $K$ to
define the geometric generic fibre $\cl{\eta}$. Let
$\cl{S}$ be the spectrum of the valuation ring of $\cl{K}$ and
$\cl{s}$ the residue field of that ring. Let $\ell$ be relatively prime to the
residue characteristic of $R$, $p$. Let $X$ be a scheme over $S$ with
generic and special fibres $X_\eta$ and $X_s$ as in the following diagram (both
squares are Cartesian).
\[
\xymatrix{
X_\eta \ar@{^{(}->}[r]^j \ar[d] & X \ar[d] & X_s \ar@{_{(}->}[l]_i \ar[d] \\
\eta \ar@{^{(}->}[r] & S & s \ar@{_{(}->}[l] }
\]
Finally, let $\cl{X}$, $X_\cl{\eta}$, and $X_\cl{s}$ be the respective
base extensions of $X$, $X_\eta$, and $X_s$ to $\cl{S}$, $\cl{\eta}$,
and $\cl{s}$ as defined by the Cartesian diagram below, which sits
over the diagram above.
\[
\xymatrix{
X_{\cl{\eta}} \ar@{^{(}->}[r]^{\cl{j}} \ar[d] & \cl{X} \ar[d] & X_{\cl{s}} \ar@{_{(}->}[l]_{\cl{i}} \ar[d] \\
\cl{\eta} \ar@{^{(}->}[r] & \cl{S} & \cl{s} \ar@{_{(}->}[l] }
\]
In SGA, the nearby cycles functor is defined for torsion sheaves $\mathcal{F}$ on
$X$, and consists of two component sheaves, one taking the inverse
image of $\mathcal{F}$ on $X_s$ as input and the other taking the
inverse image of $\mathcal{F}$ on $X_\eta$. This latter
component is the one that we will define and use as the nearby cycles
functor. It carries torsion sheaves on $X_\eta$ into a topos denoted
$X_{\cl{s}}\times_s\eta$.
\begin{definition}\label{def:wackytopos}
An element of the topos $X_s\times_s\eta$ is a sheaf
$\mathcal{F}_\cl{\eta}$ on $X_s$, the subscript indicating that it is
  equipped with a continuous action of $\Gal{\cl{\eta}/\eta}$
  compatible with the action on $X_\cl{s}$ (via the continuous map
  $\Gal{\cl{\eta}/\eta}\rightarrow \Gal{\cl{s}/s}$).
\end{definition} 
\begin{definition}\label{def:Psi}
Let $\mathcal{F}$ be a torsion sheaf on $X_\eta$ and
$\mu_{\mathcal{F}}$ the natural continuous, compatible action (in the sense of
Definitions~\ref{def:compatibleaction} and~\ref{def:ctsaction}) on the
inverse image $\cl{\mathcal{F}}$ of $\mathcal{F}$ on $X_{\cl{\eta}}$
(as explained in the paragraphs following
Definition~\ref{def:ctsaction}). The \emph{nearby cycles functor}
$\Psi$ is defined by
\[
\mathcal{F}\longmapsto \Psi (\mathcal{F})\ceq
(\cl{i}^*\cl{j}_*\cl{\mathcal{F}} , \Psi (\mu_{\mathcal{F}})),
\]
where $R\Psi (\mu_{\mathcal{F}})$ is the continuous, compatible action
of $\Gal{\cl{\eta}/\eta}$ on $\cl{i}^*\cl{j}_*\cl{\mathcal{F}}$
obtained from $\mu_{\mathcal{F}}$ by `transport of structure,' a vahue
phrase deserving some illumination.
\end{definition}

To wit, for each $\gamma\in\Gal{\cl{\eta}/\eta}$ with image
$\cl{\gamma}$ under the canonical map
$\Gal{\cl{\eta}/\eta}\rightarrow\Gal{\cl{s}/s}$, we will specify an isomorphism
\[
\operatorname{\Psi}(\mu_{\mathcal{F}} )(\gamma )\colon\cl{\gamma}_*\cl{i}^*\cl{j}_*
\mathcal{F_{\cl{\eta}}}\longrightarrow \cl{i}^*\cl{j}_* \mathcal{F}_{\cl{\eta}}
\]
derived from the given isomorphism
\[
\mu_{\mathcal{F}}(\gamma )\colon \gamma_*\mathcal{F}_{\cl{\eta}}\longrightarrow \mathcal{F}_{\cl{\eta}}.
\]
This notation is somewhat deceptive because the isomorphism is not
just the image of $\mu_{\mathcal{F}}$ under the functor $\Psi$. That is, however, where things start:
\[
\Psi (\mu (\gamma ))\colon \cl{i}^*\cl{j}_*\gamma_*\mathcal{F}_{\cl{\eta}}\longrightarrow\cl{i}^*\cl{j}_*\mathcal{F}_{\cl{\eta}}.
\]
Then, we use the fact that
\[
\cl{j}_*\gamma_* = (\cl{j}\gamma )_* = (\gamma\cl{j})_* = \gamma_*\cl{j}_*
\]
because we have the commuting square
\[
\xymatrix{
\cl{\eta} \ar[r]^{\cl{j}} \ar[d]_{\gamma} & \cl{S} \ar[d]^{\gamma} \\
\cl{\eta} \ar[r]_{\cl{j}} & \cl{S} }
\]
where the $\gamma$ along the right is the scheme morphism induced by
the restriction of the Galois morphism $\gamma$ to the ring $R$. So we
can think of $R\Psi (\mu (\gamma ))$ as a morphism
\[
\cl{i}^*\gamma_*\cl{j}_*\mathcal{F}_{\cl{\eta}} \longrightarrow \cl{i}^*\cl{j}_*\mathcal{F}_{\cl{\eta}}.
\]
The final step is provided by the proper base change theorem
(Proposition~\ref{thm:pbc}), using the commuting square
\[
\xymatrix{
\cl{S} \ar[d]_{\gamma} & \cl{s} \ar[l]_{\cl{i}} \ar[d]^{\cl{\gamma}} \\
\cl{S} & \cl{s} \ar[l]^{\cl{i}} }
\]
where $\cl{\gamma}$ is the scheme morphism induced by the image of
$\gamma$ under the canonical homomorphism $\Gal{\cl{\eta}/\eta}
\rightarrow \Gal{\cl{s}/s}$. The transported Galois isomorphism is the composite
\[
\cl{\gamma}_*\cl{i}^*\cl{j}_*\mathcal{F}_{\cl{\eta}} \iso \cl{i}^*\gamma_*\cl{j}_*\mathcal{F}_{\cl{\eta}} \longrightarrow \cl{i}^*\cl{j}_*\mathcal{F}_{\cl{\eta}},
\]
the leftmost map coming from proper base change (see Theorem~\ref{thm:pbc}).

The functor $\Psi$ has an obvious extension to the category of
$\pi$-adic sheaves, and from there to $\cl{\QQ}_\ell$-local systems,
using the definition of continuous, compatible actions extended to
these categories in Chapter~\ref{ch:galoisactions}
Section~\ref{section:systemactions}. 
\begin{remark}\label{rem:integralmodeldependence}
As mentioned above, the original framework set by Deligne specifies the
scheme $X$ over the Henselian trait $S$ at the outset, but the
definition of $\Psi$ given above only makes reference to $X_\eta$ and
$X_s$. Indeed, the work in this chapter begins with only the generic
fibre $X_\eta$, leaving open the possibility of defining nearby cycles
functors with respect to different integral models of $X_\eta$, \ie,
any scheme $\un{X}$ over $S$ equipped with an isomorphism
$\un{X}_\eta\iso X_\eta$. Theorem~\ref{prop:mainlocalsystems} details
what happens when we take the nearby cycles of the local systems $\oE$
and $\oE^\prime$ from Chapter~\ref{ch:somelocalsystems} with respect
to two different integral models of $\mathcal{U}_{0,K}$. These integral models
are defined in the next section.
\end{remark}

\section{Integral Models}\label{section:integralmodels}


Deligne defined nearby cycles for torsion sheaves on a scheme defined
over a Henselian trait. Therefore, in the diagram
\[
\xymatrix{
X_{{\eta}} \ar@{^{(}->}[r]^{j} \ar[d] & X \ar[d] & X_s
\ar@{_{(}->}[l]_{i} \ar[d] \\
\eta \ar@{^{(}->}[r] & S & s \ar@{_{(}->}[l] }
\] 
the middle scheme $X$ over the trait comes specified at the
outset. 

This thesis uses a `restriction' of the nearby cycles
functor, and begins only with a scheme over the field $K$, that is, with
the generic fibre of the diagram. Thus, we are free to choose any
integral model for the starting scheme and calculate nearby cycles
with respect to that model. In the next section we will use two different models.
\begin{definition}
The \emph{standard integral model} $\un{\mathcal{U}}_0$ of
$\mathcal{U}_{0,K}$  is obtained from the affine scheme
\[
\un{\mathcal{U}}\ceq \mathcal{U}_{0,\mathcal{O}_K} = \Spec{\mathcal{O}_K[a,b,c,d]/(ad-bc-1,a+d-2)}
\]
by removing the identity. It can be built with a gluing construction
analogous to that given for $\mathcal{U}_{0,K}$ (in \S~\ref{ssection:schemes}), with $K$ replaced
by $\mathcal{O}_K$ throughout.\par
The \emph{nonstandard integral model} $\un{\mathcal{U}}^\prime_0$ is obtained
from the affine scheme 
\[
\un{\mathcal{U}}^\prime \ceq
\Spec{\mathcal{O}_K\left[a,\varpi b,\frac{c}{\varpi},d\right]/(ad-bc-1,a+d-2)}
\]
by likewise removing the identity. It too can be constructed by a
gluing analogous to $\un{\mathcal{U}}_0$ and $\mathcal{U}_0$.
\end{definition}
In accord with Remark~\ref{rem:affineforquasiaffine}, we will work
with these models as if they were affine, conflating their coordinate
rings with those of the affine schemes $\un{\mathcal{U}}$ and
$\un{\mathcal{U}}^\prime$, respectively. For compactness, we will
denote these rings by $\mathcal{O}_K[\un{\mathcal{U}}_0]$ and
$\mathcal{O}_K[\un{\mathcal{U}}_0^\prime ]$.

Here is how these models play into the main result: Recall that for a
local system $\mathcal{F}$ on $\mathcal{U}_{0,K}$, $\Psi\mathcal{F} =
(\cl{i}^*\cl{j}_*\cl{\mathcal{F}}, \Psi (\mu ))$, where
$\cl{\mathcal{F}}$ is the inverse image of $\mathcal{F}$ under the
canonical projection $\cl{\mathcal{U}}_0 \rightarrow \mathcal{U}_{0,K}$,
$\mu$ is the natural Galois action on $\cl{\mathcal{F}}$ induced by base
change, and $\cl{i}$ and $\cl{j}$ are the
inclusions of the special and generic fibres of a chosen integral
model for $\mathcal{U}_{0,K}$. Since it is the inverse image of a local
system, $\cl{\mathcal{F}}$ is a local system, and so, in fact, is
$\cl{i}^*\cl{j}_*\cl{\mathcal{F}}$, as will be shown in the proof of
Theorem~\ref{prop:mainlocalsystems}. If the character
$\Gal{\cl{K}/K}$ that defines the transported Galois action
$\Psi (\mu )$ factors through the canonical map
$\Gal{\cl{K}/K}\rightarrow \Gal{\cl{k}/k}$, giving a continuous action
of $\Gal{\cl{k}/k}$, then the extension of Deligne's equivalence proved in
Chapter~\ref{ch:galoisactions} shows that the nearby cycles is
equivalent to a local system on $\mathcal{U}_{0,k}$. In this case, we
say the nearby cycles \emph{descends}.
\begin{definition}\label{def:adapted}
When the nearby cycles of a local system, computed with respect to a particular integral
model for $\mathcal{U}_{0,K}$, descends to a local system on $(\mathcal{U}_0)_k$ we will say
that the integral model is \emph{adapted} to the local system. 
\end{definition}

\section{The Main Result}\label{section:mainresult}
\begin{theorem}\label{prop:mainlocalsystems}
The integral model $\un{\mathcal{U}}_0$ is adapted to the local system
$\oE$ but not to $\oE^\prime$, while $\un{\mathcal{U}}^\prime_0$ is
adapted to $\oE^\prime$ but not to $\oE$.
\end{theorem}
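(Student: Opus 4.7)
The plan rests on the principle that the nearby cycles of a $\cl{\QQ}_\ell$-local system on $\mathcal{U}_{0,K}$ with respect to an integral model descends to $\mathcal{U}_{0,k}$ precisely when the inertia subgroup of $\Gal{\cl{K}/K}$ acts trivially on the transported Galois action, which by Proposition~\ref{cor:galoislocalsystem} combined with the homotopy exact sequence for the integral model over $\Spec{\mathcal{O}_K}$ is equivalent to the local system extending to a lisse $\cl{\QQ}_\ell$-sheaf on that integral model. Under this principle the theorem reduces to producing explicit integral étale extensions in the two positive cases, and to tracing the Galois ramification of $\rho_{\oE^\prime}$ through $\Psi$ in the two negative cases.

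For the positive cases, the defining formulas of $f\colon \mathbb{A}^2_{0,K}\to\mathcal{U}_{0,K}$ in Lemma~\ref{lemma:defnf} have integer coefficients, so $f$ extends to a finite morphism $\un{f}\colon \un{\mathbb{A}}^2_0\to\un{\mathcal{U}}_0$ over $\Spec{\mathcal{O}_K}$, whose étaleness I would verify with the Jacobian criterion (in the implicit setting $p\neq 2$). Rewriting $f^\prime$ of Lemma~\ref{lemma:defnfprime} in the coordinates $(a,\varpi b, c/\varpi, d)$ of $\un{\mathcal{U}}^\prime_0$ produces the same integer formulas, and hence an analogous étale double cover $\un{f}^\prime\colon \un{\mathbb{A}}^2_0\to\un{\mathcal{U}}^\prime_0$. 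Attaching the sign character of the $\mu_2$-deck group in each case produces a lisse extension $\un{\oE}$ of $\oE$ to $\un{\mathcal{U}}_0$, and $\un{\oE}^\prime$ of $\oE^\prime$ to $\un{\mathcal{U}}^\prime_0$. Since these extensions are lisse across the special fibre one has $\cl{j}_*\cl{\oE}=\cl{\un{\oE}}$ (and similarly for $\oE^\prime$), whence $\Psi\oE=\cl{\un{\oE}}|_{\cl{s}}$; unpacking Definition~\ref{def:Psi} identifies the transported Galois action with the one induced on $\cl{\un{\oE}}|_{\cl{s}}$ by base change of the integral extension, which by Proposition~\ref{cor:galoislocalsystem} applied to $\un{\mathcal{U}}_0$ factors through the Henselian quotient $\Gal{K^{ur}/K}\cong\Gal{\cl{k}/k}$. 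The argument for $\un{\mathcal{U}}^\prime_0$ and $\oE^\prime$ is identical.

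For the non-adaptations, the homotopy exact sequence for $\un{\mathcal{U}}_0$ over $\Spec{\mathcal{O}_K}$ forces the Galois quotient of $\algfg{\un{\mathcal{U}}_0,\cl{u}}$ to be $\Gal{K^{ur}/K}$, so any local system extending lisse to $\un{\mathcal{U}}_0$ must be unramified along $\Gal{\cl{K}/K}$. By Lemma~\ref{lemma:rhoEprime}, $\rho_{\oE^\prime}|_{\Gal{\cl{K}/K}}$ is the totally ramified quadratic character cutting out $K^\prime=K(\sqrt{\varpi})$, so $\oE^\prime$ admits no lisse extension to $\un{\mathcal{U}}_0$. To convert this into actual non-descent of $\Psi\oE^\prime$, I would represent $\oE^\prime$ via the degree-4 cover $f_\varpi\colon \mathbb{A}^2_{0,K^\prime}\to\mathcal{U}_{0,K}$ of Theorem~\ref{lemma:varpicovers} (using the character $\varphi_1\mapsto -1,\ \varphi_2\mapsto -1$) and extend it integrally to $\un{f}_\varpi\colon \un{\mathbb{A}}^2_{0,\mathcal{O}_{K^\prime}}\to\un{\mathcal{U}}_0$; this extension factors as $\un{f}$ precomposed with the ramified base change $\Spec{\mathcal{O}_{K^\prime}}\to\Spec{\mathcal{O}_K}$, which is generically degree two but of degree one on the special fibre since $k^\prime=k$. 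Tracking the transport of structure in Definition~\ref{def:Psi} through the proper base change Theorem~\ref{thm:pbc} then shows that this ramification produces a non-trivial inertia character in the transported Galois action on $\Psi\oE^\prime$. Hence $\un{\mathcal{U}}_0$ is not adapted to $\oE^\prime$; the remaining case is symmetric under the $K$-isomorphism $m(\varpi)$, which exchanges the two local systems together with the two integral structures. The principal obstacle is this last tracking step, since the representing cover is no longer étale over the integral model and the identification of the transported inertia action must be extracted directly from Definition~\ref{def:Psi}.
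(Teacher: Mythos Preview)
Your positive cases are exactly the paper's: extend $f$ and $f^\prime$ to \'etale double covers $\un{f}$ and $\un{f}^\prime$ of the respective integral models, attach the sign character, and read off that $j_*\oE$ and $j_*\oE^\prime$ are lisse, hence the nearby cycles descend.

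For the negative cases you take a different route. You invoke Lemma~\ref{lemma:rhoEprime} to see that $\rho_{\oE^\prime}|_{\Gal{\cl{K}/K}}$ is the ramified quadratic character of $K^\prime/K$, and argue that this obstructs a lisse extension to $\un{\mathcal{U}}_0$ via a homotopy exact sequence over $\Spec{\mathcal{O}_K}$. The paper instead stays at the level of covers: it checks directly that the only integral cover through which the character defining $\oE^\prime$ could factor over $\un{\mathcal{U}}_0$ would require the substitution $x\mapsto x/\sqrt{\varpi}$, which is not defined over $K$. Your ramification argument is the cleaner conceptual packaging of the same obstruction, though note that the homotopy exact sequence you cite is only stated in the thesis over a field, so you are implicitly using the standard fact $\algfg{\Spec{\mathcal{O}_K}}\cong\Gal{K^{\mathrm{nr}}/K}$.

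Where the two arguments really diverge is your ``principal obstacle'': turning non-extension into non-descent of $\Psi$. You propose to track the inertia action directly through Definition~\ref{def:Psi} using the (non-\'etale) integral extension of $f_\varpi$. The paper avoids this entirely by a contrapositive. It first shows that $\cl{i}^*\cl{j}_*\cl{\oE^\prime}$ is a local system on $\mathcal{U}_{0,\cl{k}}$ (because $\cl{\oE}=\cl{\oE^\prime}$ extends over $\cl{\un{\mathcal{U}}}_0$). Then it uses that for a scheme over a Henselian trait the \'etale site is equivalent to that of its special fibre (Murre~\cite[8.1.3]{murre}): if $\Psi_{\un{\mathcal{U}}_0}\oE^\prime$ descended to $\mathcal{U}_{0,k}$, pushing forward along $i$ would be automatic, and one would obtain a lisse extension of $\oE^\prime$ to $\un{\mathcal{U}}_0$, contradicting the non-factorization already established. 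This Henselian step is what you are missing; it replaces your direct tracking and is considerably cleaner.

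There is also a shortcut implicit in your own setup that would close the gap without Murre's result: since $\cl{\oE}=\cl{\oE^\prime}$, the sheaf parts of $\Psi_{\un{\mathcal{U}}_0}\oE$ and $\Psi_{\un{\mathcal{U}}_0}\oE^\prime$ coincide, and by Proposition~\ref{cor:galoislocalsystem} their transported Galois actions differ exactly by the character $\rho_{\oE^\prime}|_{\Gal{\cl{K}/K}}\cdot(\rho_{\oE}|_{\Gal{\cl{K}/K}})^{-1}$, which is the ramified quadratic character of $K^\prime/K$. You have already shown $\Psi_{\un{\mathcal{U}}_0}\oE$ has trivial inertia action; hence $\Psi_{\un{\mathcal{U}}_0}\oE^\prime$ has nontrivial inertia action and cannot descend. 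Your symmetry observation via $m(\varpi)$ then handles the remaining case.
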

\begin{proof}
The strategy of the proof comes from the following observation,
justified at the end of the proof: that if the nearby cycles of a
local system $\mathcal{L}$ descends, the local system on
$\mathcal{U}_{0,k}$ it descends to is $i^*j_*\mathcal{L}$, implying
that this image sheaf is itself a local system. This is not the case generally, the obstruction being
the finicky pushforward operation. 

While inverse images of local systems are
always local systems, and can be calculated at the level of
fundamental group representations simply by composing the
representation of the original sheaf with the functorial map between
fundamental groups, the pushforward of a local system need not be a
local system. Let $f\colon X\rightarrow Y$ be a morphism of schemes
and $\mathcal{F}$ a local system on $X$. For $f_*\mathcal{F}$ to be a
local system, the representation of $\algfg{X,\cl{x}}$ equivalent to
$\mathcal{F}$ must factor through the functorial map
$\pi_1(f)\colon\algfg{X,\cl{x}}\rightarrow \algfg{Y,\cl{y}}$, as in
\[
\xymatrix{\algfg{X,\cl{x}} \ar[dr]_{\rho_\mathcal{F}}
  \ar[rr]^{\pi_1(f)} && \algfg{Y,\cl{y}}
  \ar[dl]^{\rho_{f_*\mathcal{F}}} \\
& \cl{\QQ}^\times_\ell , & }
\]
in which case the factorizing character of $\algfg{Y,\cl{y}}$ is equivalent to
$f_*\mathcal{F}$. Fortunately, the covers used to define $\oE$ and $\oE^\prime$
arise from covers of $\underline{\mathcal{U}}_0$ and
$\un{\mathcal{U}}_0^\prime$ via base
change, which allows us to easily determine when the defining
characters factor in this way and when they don't, from which the
conclusions of the theorem derive.

In particular, the covers permitting these
factorizations are by the affine plane over $\mathcal{O}_K$
defined completely analogously to $f$ and $f^\prime$ (from
Lemmas~\ref{lemma:defnf} and~\ref{lemma:defnfprime}) for the
standard and nonstandard integral models, respectively. On
(falsely affine) coordinate rings,
\begin{eqnarray*}
\un{f}\colon \mathcal{O}_K[\un{\mathcal{U}}] & \longrightarrow &
\mathcal{O}_K[x,y] \\
a &\longmapsto & 1 -xy \\
b & \longmapsto & x^2 \\
c & \longmapsto & -y^2 \\
d & \longmapsto & 1 + xy
\end{eqnarray*}
\begin{eqnarray*}
\un{f}^\prime\colon \mathcal{O}_K[\un{\mathcal{U}}^\prime ] & \longrightarrow &
\mathcal{O}_K[x,y] \\
a & \longmapsto & 1 - xy \\
b & \longmapsto & \frac{x^2}{\varpi} \\
c & \longmapsto & -\varpi y^2 \\
d & \longmapsto & 1 + xy.
\end{eqnarray*}
Both of these covers have automorphism groups of order 2, with maps
defined exactly as those for $f$ and $f^\prime$, which we will denote
by $\un{\id}$ and $\un{(-1)}$. The Cartesian square determined by the inclusion of the generic fibre into
the standard integral model and the cover $\un{f}$ is
\[
\xymatrix{
K[x,y] & \mathcal{O}_K[x,y] \ar[l]_{\delta} \\
\mathcal{O}_K[\mathcal{U}] \ar[u]^f & \mathcal{O}_K[\un{\mathcal{U}}] \ar[l] \ar[u]_{\un{f}} }
\]
and the one determined by the inclusion into the nonstandard model
and $\un{f}^\prime$ is
\[
\xymatrix{
K[x,y] & \mathcal{O}_K[x,y] \ar[l]_\delta \\
\mathcal{O}_K[\mathcal{U}] \ar[u]^{f^\prime} & \mathcal{O}_K[\un{\mathcal{U}}^\prime ] \ar[l] \ar[u]_{\un{f}^\prime}.
}
\]
In both cases, the homomorphism $\delta$ is simply the natural
inclusion map. Therefore, the induced maps of automorphism groups are
\begin{eqnarray*}
\Aut_{\mathcal{U}_{0,K}}(f) & \longrightarrow & \Aut_{\un{\mathcal{U}}_0}(\un{f}) \\
\id &\longmapsto & \un{\id} \\
(-1) & \longmapsto & \un{(-1)}.
\end{eqnarray*}
and
\begin{eqnarray*}
\Aut_{\mathcal{U}_{0,K}}(f^\prime ) &\longrightarrow & \Aut_{\un{\mathcal{U}}^\prime_0}(\un{f}^\prime ) \\
\id & \longmapsto & \un{\id} \\
(-1) & \longmapsto & \un{(-1)},
\end{eqnarray*}
so that the nontrivial characters $\chi$ and $\chi^\prime$ of $\Aut_{\mathcal{U}_{0,K}}(f)$ and $\Aut_{\mathcal{U}_{0,K}}(f^\prime )$
factor through the nontrivial characters $\un{\chi}$ and $\un{\chi}^\prime$ of $\Aut_{\un{\mathcal{U}}_0}(\un{f})$ and
$\Aut_{\un{\mathcal{U}}^\prime_0}(\un{f}^\prime )$, respectively. If we define $\un{\oE}$ and
$\un{\oE}^\prime$ to be the local systems on $\un{\mathcal{U}}_0$ and
$\un{\mathcal{U}}_0^\prime$ defined by these characters, we can
conclude that both $(j_{\un{\mathcal{U}}_0})_*\oE$ and
$(j_{\un{\mathcal{U}}^\prime_0} )_*\oE^\prime$ are local systems and
specifically that
\[
(j_{\un{\mathcal{U}}_0})_*\oE = \un{\oE}
\]
and
\[
(j_{\un{\mathcal{U}}^\prime_0} )_*\oE^\prime = \un{\oE}^\prime.
\]
From here, there are no obstructions to the descent of the nearby cycles,
since the inverse image of a local system always gives a local system.
Composing $\un{\chi}$ with the functorial map
$\pi_1(i_{\un{\mathcal{U}}_0})\colon
\algfg{(\un{\mathcal{U}}_0)_s,\cl{u}} \rightarrow
\algfg{\cl{\mathcal{U}}_0,\cl{u}}$ gives a character equivalent to a
  local system on $(\un{\mathcal{U}}_0)_s = \mathcal{U}_{0,k}$ which we denote
  $\oE_s$. Let $\cl{\oE_s}$ be the base change of $\oE_s$ to
  $\cl{(\un{\mathcal{U}}_0)_s} \ceq
  (\un{\mathcal{U}}_0)_s\times_k\cl{k}$. Then, 
\[
\Psi_{\un{\mathcal{U}}_0}\oE = (\cl{\oE_s}, \mu ),
\]
where $\mu$ is the natural action of
$\Gal{\cl{K}/K}$ on $\cl{\oE_s}$ arising from base change. And composing $\un{\chi}^\prime$
with the functorial map
$\pi_1(i_{\un{\mathcal{U}}_0^\prime})\colon\algfg{(\un{\mathcal{U}}^\prime_0)_s,\cl{u}}\rightarrow
\algfg{\un{\mathcal{U}}^\prime_0,\cl{u}}$ gives a character equivalent to a
local system on $(\un{\mathcal{U}}^\prime_0)_s$, denoted
$\oE^\prime_s$. From this we conclude that 
\[
\Psi_{\un{\mathcal{U}}^\prime_0}\oE^\prime = (\cl{\oE^\prime_s}, \mu^\prime).
\]
Under the equivalence of Proposition~\ref{prop:delignegaloissheaf}
extended to local systems in Chapter~\ref{ch:galoisactions}, both of
these nearby cycles local systems descend to local systems on
$(\un{\mathcal{U}}_0)_s = (\un{\mathcal{U}}^\prime_0)_s$, which must
be those local systems from which they were defined. This completes
the proof of one half of the theorem.

The second half, regarding the nearby cycles of $\oE$ and $\oE^\prime$
calculated with respect to the opposite integral model used
previously, is proved by returning to the question of factoring the
defining characters $\chi$ and $\chi^\prime$, this time factoring
$\chi$ through $\algfg{\un{\mathcal{U}}^\prime_0,\cl{u}}$ and
$\chi^\prime$ through $\algfg{\un{\mathcal{U}}_0,\cl{u}}$. In these
cases, the claim is that no such factorizations exist, accounting for
the failure of the nearby cycles to descend to local systems on
$(\un{\mathcal{U}}_0)_s = (\un{\mathcal{U}}^\prime_0)_s$.

The essential problem is that the only covers of the integral models
from which the covers used to define $\oE$ and $\oE^\prime$ can be
obtained via base change are not \'etale, and therefore unusable for
the purposes of factorizing $\chi$ and $\chi^\prime$. To see this
we begin by noting that we could have approached the successful
factorization above using the alternate covers $f_\varpi$ and
$f^\prime_\varpi$ through which $\oE$ and $\oE^\prime$ can also be
defined (by Lemma~\ref{lemma:varpicovers}). Both of those covers arise
by base change from covers of
$\un{\mathcal{U}}_0$ and $\un{\mathcal{U}}_0^\prime$, namely covers by
  $\mathbb{A}^2_{0,\mathcal{O}_{K^\prime}}$ with covering
maps defined in the same way as $\un{f}$ and $\un{f}^\prime$. However,
neither of these covers is \'etale (both ramify, for example, at the
points in $\un{\mathcal{U}}_0$ and $\un{\mathcal{U}}_0^\prime$
corresponding to the prime ideal $(p)$), so the question of
factorization could not be approached through those covers. But in
both cases there existed intermediate covers that did offer a means of
factorizing the characters because those intermediate covers over
$\mathcal{U}_{0,K}$ could also be used to define $\oE$ and $\oE^\prime$
and the corresponding covers over the integral models were, in fact,
\'etale. Similarly, there are Cartesian squares relating covers
through which our local systems can be defined and covers of the
standard and nonstandard integral models, namely
\[
\xymatrix{
\mathbb{A}^2_{0,K^\prime} \ar[d]_{f_\varpi} \ar[r] &
\mathbb{A}^2_{0,\mathcal{O}_{K^\prime}}
\ar[d]^{\un{f}^\prime_\varpi} \\
\mathcal{U}_{0,K} \ar[r] & \un{\mathcal{U}}_0^\prime }
\]
and
\[
\xymatrix{
\mathbb{A}^2_{0,K^\prime} \ar[d]_{f^\prime_\varpi} \ar[r] &
\mathbb{A}^2_{0,\mathcal{O}_{K^\prime}}
\ar[d]^{\un{f}_\varpi} \\
\mathcal{U}_{0,K} \ar[r] & \un{\mathcal{U}}_0. }
\]
But once again, this is not a usable setting to consider the question of
factorization since the covers of the integral models are not
\'etale. However, in this case there do not exist intermediate covers
that define Cartesian subsquares of the ones above. The possible
intermediate covers over $\mathcal{U}_{0,K}$ are $f$ for the first diagram
and $f^\prime$ for the second (because those are the only remaining
covers that can be used to define the local systems we want), and the
only possible covers over the integral models from which we could
recover a Cartesian square are $\un{f}^\prime$ for the first diagram
and $\un{f}$ for the second. However, the only possible map between
these pairs of covers that would complete such a square is defined
only over the quadratic extension $K^\prime /K$, not over $K$
itself and therefore cannot be used (in particular, the necessary map
would carry $x$ to $\frac{x}{\sqrt{\varpi}}$ and $y$ to
$\sqrt{\varpi}y$). Hence, the character $\chi$ cannot factor through
$\algfg{\un{\mathcal{U}}_0^\prime,\cl{u}}$ nor can $\chi^\prime$
factor through $\algfg{\un{\mathcal{U}}_0,\cl{u}}$. We can infer,
then, that the pushforward of $\oE$ to $\un{\mathcal{U}}_0^\prime$ is
not a local system, nor is the pushforward of $\oE^\prime$ to
$\un{\mathcal{U}}_0$ a local system.

This allows us to conclude that $\Psi_{\un{\mathcal{U}}_0^\prime}\oE$
and $\Psi_{\un{\mathcal{U}}_0}\oE^\prime$ do not descend to local
systems on $(\un{\mathcal{U}}_0)_s$. Justification for this conclusion
comes from the following two facts: first, that the sheaf components of
$\Psi_{\un{\mathcal{U}}_0^\prime}\oE$ and
$\Psi_{\un{\mathcal{U}}_0}\oE^\prime$ are both local systems on $\mathcal{U}_{0,\cl{k}}$; and
second, that if these local systems descended, the pushforwards that
we attempted to calculate above would be local systems. The first fact
is proved by first noting that the base changes of $\oE$ and
$\oE^\prime$ are local systems on
$\cl{\mathcal{U}}_0$, defined by the
compositions of $\chi$ and $\chi^\prime$ with the functorial map
\[
\algfg{\cl{\mathcal{U}}_0,\cl{u}} \longrightarrow
\algfg{\mathcal{U}_{0,K},\cl{u}}.
\]
From the calculations above, it can be seen that both of these
compositions are the same, equal to the nontrivial character of
$\algfg{\cl{\mathcal{U}}_0,\cl{u}}$. The first fact is proved after it
is shown that the pushforward of this local system to the integral
model 
\[
\cl{\un{\mathcal{U}}}_0 =
\Spec{\mathcal{O}_{K^\textrm{nr}}[a,b,c,d]/(ad-bc-1,a+d-2)},
\]
where $K^\textrm{nr}$ is the maximal unramified extension of $K$, is a
local system. To do this, we construct a factorization of the
nontrivial character of $\algfg{\cl{\mathcal{U}}_0,\cl{u}}$ through
the auromorphism group of the following \'etale cover of
$\cl{\un{\mathcal{U}}}_0$. Let $\mathbb{A}^2_{\mathcal{O}_{\cl{K}}}
\rightarrow \cl{\un{\mathcal{U}}}_0$ be given on coordinate rings by
\begin{eqnarray*}
\mathcal{O}_{\cl{K}}[a,b,c,d]/(ad-bc-1,a+d-2) & \longrightarrow &
\mathcal{O}_{\cl{K}}[x,y] \\
a & \longmapsto & 1 - xy \\
b & \longmapsto & x^2 \\
c & \longmapsto & -y^2 \\
d & \longmapsto & 1+xy.
\end{eqnarray*}
This is a degree 2 Galois cover with automorphism group consisting of
a nontrivial map that multiplies both coordinates by -1. The square
\[
\xymatrix{
\mathbb{A}^2_{\cl{K}} \ar[r] \ar[d] &
\mathbb{A}^2_{\mathcal{O}_{\cl{K}}} \ar[d] \\
\cl{\mathcal{U}}_0 \ar@{^{(}->}[r]^{\cl{j}} & \cl{\un{\mathcal{U}}}_0
}
\]
is Cartesian, and the top map defines a map of automorphism groups
that sends nontrivial map to nontrivial map, allowing the base changed
character to factor. Therefore, $\cl{j}_*\cl{\oE} =
\cl{j}_*\cl{\oE^\prime}$ is a local system, and hence so is its
inverse image on the special fibre.

If $\cl{j}_*\cl{\oE}$ descends to local system on
$(\un{\mathcal{U}}_0)_s$, its
pushforwards to the integral model the nearby cycles were taken with
respect to would be a local system, and equal to the pushforwards from the generic
fibre we have attempted to calculate. This is because the trait $S =
\Spec{\mathcal{O}_K}$ both integral models are defined over is
Henselian, and the \'etale site of a scheme defined over a Henselian trait
is equivalent to the \'etale site of its special fibre (see
Murre~\cite[8.1.3]{murre}). This result renders the factorization
requirement for pushforwards of local systems to be local systems
trivial for pushforwards from the special fibre (covers of the special
fibre \emph{always} arise via base change from the integral
model and each such pair of covers will have isomorphic covering
groups). This completes the proof.
\end{proof}

   \chapter{Distributions From The Local Systems}\label{ch:distributions}

As an application of Theorem~\ref{prop:mainlocalsystems}, the last
piece of business in this thesis is to associate
distributions to $\oE$ and $\oE^\prime$ and
establish some of the basic properties of those distributions. In
particular, it turns out that they are admissible in the sense of
Harish-Chandra~\cite{HC78} and are eigendistributions of
the Fourier transform.

As in the previous chapter, set $G=\SL{2}{K}$; in addition, we write
$G_k$ for $\SL{2}{k}$.

\section{Distributions from the Local Systems}
By Theorem~\ref{prop:mainlocalsystems}, the nearby cycles sheaves
\[
\mathcal{E} \ceq R\Psi_{\un{\mathcal{U}}_0}\oE\textrm{  and
}\mathcal{E}^\prime \ceq R\Psi_{\un{\mathcal{U}}^\prime_0}\oE^\prime
\]
on $\mathcal{U}_{0,\cl{k}}$ descend to local systems on
$\mathcal{U}_{0,k}$. Both $\mathcal{E}$ and $\mathcal{E}^\prime$
consist of two pieces of data, a local system on
$\mathcal{U}_{0,\cl{k}}$ and a continuous action of
$\Gal{\cl{k}/k}$. The proof of Theorem~\ref{prop:mainlocalsystems}
showed that both components of $\mathcal{E}$ and
$\mathcal{E}^\prime$ are equal. Despite this we will continue refer to
both because distince objects will eventually be defined from these
two nearby cycles sheaves. Denote the local system component of
both sheaves by $\cl{\mathcal{E}}$. That $\mathcal{E}$ and
$\mathcal{E}^\prime$ descend means there is an isomorphisms of sheaves
\[
\varphi\colon\Fr^*\cl{\mathcal{E}}\stackrel{\sim}{\longrightarrow}\cl{\mathcal{E}}
\]
and
\[
\varphi^\prime\colon\Fr^*\cl{\mathcal{E}}^\prime\stackrel{\sim}{\longrightarrow}\cl{\mathcal{E}}^\prime
,
\]
where $\Fr$ is the automorphism of $G_{\cl{k}}$ induced by the
Frobenius in $\Gal{\cl{k}/k}$. Then, as
described in SGA$4\frac{1}{2}$~\cite[\emph{Rappels}]{SGA4half}, it is possible to define a characteristic
function for each of $\mathcal{E}$ and $\mathcal{E}^\prime$.
\begin{definition}
The \emph{trace of Frobenius}
associated to a local system $\mathcal{F}$ on $\mathcal{U}_{0,\cl{k}}$
with an isomorphism
\[
\varphi_{\mathcal{F}}\colon\Fr^*\mathcal{F}\stackrel{\sim}{\longrightarrow}\mathcal{F}
\]
is given by 
\[
t_\Fr^{\mathcal{F}}(a) \ceq
\operatorname{Trace}((\varphi_{\mathcal{F}})_{\cl{a}};\mathcal{F}_{\cl{a}}), \qquad
\forall a\in \mathcal{U}_{0,\cl{k}}(k).
\]
To be precise, the value $t^{\mathcal{F}}_\Fr(a)$ is the trace of the
operator on $\mathcal{F}_{\cl{a}}$ induced by the composition of
morphisms
\begin{equation}\label{eqn:stalkoperator}
\mathcal{F}_{\cl{a}} = \mathcal{F}_{\Fr(\cl{a})} \stackrel{\sim}{\longrightarrow}
(\Fr^*\mathcal{F})_{\cl{a}} \stackrel{\varphi_\cl{a}}{\longrightarrow}
\mathcal{F}_{\cl{a}},
\end{equation}
the leftmost map being the canonical isomorphism and
$\varphi_{\cl{a}}$ the isomorphism of stalks induced by $\varphi$.
\end{definition}
Thus, both $t_\Fr^{\mathcal{E}}$ and $t_\Fr^{\mathcal{E}^\prime}$ are
functions on $\mathcal{U}_{0,k}(k)$. Extend these functions by zero
to $G_k(k)$. From there, we can obtain functions on $G(K)$ using the
parahoric subgroups of $\SL{2}{K}$ that sit above the integral models
$\un{\mathcal{U}}_0$ and $\un{\mathcal{U}}_0^\prime$. These are
\begin{eqnarray*}
 X &\ceq & \Spec{\mathcal{O}[a,b,c,d]/(ad-bc-1)} \\
 X^\prime & \ceq & \Spec{\mathcal{O}\left[ a,\varpi
   b,\frac{c}{\varpi},d\right] /(ad-bc-1)}, 
\end{eqnarray*}
respectively. Inflate $t_\Fr^{\mathcal{E}}$ to $X(\mathcal{O}_K)$ and
$t_\Fr^{\mathcal{E}^\prime}$ to $X^\prime(\mathcal{O}_K)$) by
composing with the reduction map
\[
X(\mathcal{O}_K)\textrm{
  (resp. $X^\prime(\mathcal{O}_K)$)}\longrightarrow G_k(k).
\]
Finally, extend the results by zero to obtain functions
\[
f_{\oE},f_{\oE^\prime}\colon G(K)\longrightarrow
\cl{\QQ}_\ell .
\]
Each of these functions is supported by a compact subgroup---either
$X(\mathcal{O}_K)$ or $X^\prime (\mathcal{O}_K)$---and the value of any
point in the support is determined by its value under the reduction
map. Therefore, both $f_{\oE}$ and $f_{\oE^\prime}$ belong to the
Hecke algebra of $G(K)$.
\begin{definition}
The \emph{Hecke algebra} $\mathcal{C}(G(K))$ of $G(K)$ is the set of compactly supported
and locally constant functions
\[
f\colon G(K)\longrightarrow \cl{\QQ}_\ell
\]
endowed with the structure of a $\cl{\QQ}_\ell$-algebra with scalar
multiplication, pointwise addition and convolution of functions.
\end{definition}
\begin{remark}\label{rem:parahoricdependence}
It is worth noting here that although the trace of Frobenius functions
$t^{\mathcal{E}}_\Fr$ and $t^{\mathcal{E}^\prime}_\Fr$ are identical,
they were inflated through different parahoric subgroups. The
resulting functions $f_{\oE}$ and $f_{\oE^\prime}$ are
distinct; therefore, the subscripts are no longer meaningless,
keeping track of the parahoric subgroup the trace was
inflated to. The switch to $\oE$ and $\oE^\prime$ is meant to
emphasize the connection to the original local systems on $G$ that
give rise to these functions.
\end{remark} 

Throughout the rest of this chapter we fix a Haar measure on $G(K)$.

\begin{definition}\label{def:dists}
Let $\mathcal{L}\in\{\oE , \oE^\prime\}$. Then for all $f\in\mathcal{C}(G(K))$,
\[
\Theta_{\mathcal{L}}(f) \ceq \int_{G(K)}\int_{G(K)}
f(y^{-1}xy)f_{\mathcal{L}}(x)dxdy.
\]
\end{definition}

\section{Fundamental Properties of the Distributions}

Now we prove that the integrals in Definition~\ref{def:dists} do
indeed converge, giving distributions, and that these distributions
are \emph{admissible} as defined by Harish-Chandra in~\cite[$\S
14$]{HC78}. There, he goes on to note that all distributions obtained
as characters of admissible representations (a process described in~\cite[$\S
5$]{HC70}) as well as all linear
combinations of such distributions are admissible. It turns
out that $\Theta_{\oE}$ and $\Theta_{\oE^\prime}$ are just such
objects. In light of that, recalling the exact and somewhat technical definition of
admissibility is something of an unnecessary distraction, so we will refrain. The
curious reader can consult Harish-Chandra's article.

\begin{proposition}\label{prop:properties1}
Let $\mathcal{L}\in\{\mathcal{E},\mathcal{E}^\prime\}$. The integral
$\Theta_{\mathcal{L}}$ in Definition~\ref{def:dists}
\begin{enumerate}
\item converges for every
  $f\in\mathcal{C}(G(K))$, therefore defining a distribution;
\item is an admissible distribution, as
  defined by Harish-Chandra~\cite[$\S 14$]{HC78}.
\end{enumerate}
\end{proposition}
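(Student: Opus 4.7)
My plan is to handle convergence and admissibility separately, both by leveraging the observation that $\Theta_{\mathcal{L}}$ is, formally, a pairing between $f_{\mathcal{L}}$ and the orbital-integral functional. Applying Fubini and substituting $u = y^{-1}xy$ in the inner integral yields the suggestive rewriting
\[
\Theta_{\mathcal{L}}(f) = \int_{G(K)} f(u)\,\Phi_{f_{\mathcal{L}}}(u)\,du,
\]
where $\Phi_{f_{\mathcal{L}}}(u) = \int_{G(K)} f_{\mathcal{L}}(yuy^{-1})\,dy$ is the orbital integral of $f_{\mathcal{L}}$ at $u$. Both parts of the proposition flow from this reformulation.

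For convergence, the key point is that $f_{\mathcal{L}}$ is compactly supported inside a parahoric and, after inflation, concentrated on topologically regular unipotent elements. Standard convergence results for orbital integrals of compactly supported functions at regular elements of $\SL{2}{K}$ apply: the centralizer of a regular unipotent element is the unipotent radical of a Borel, giving enough control over the quotient $G(K)/G(K)_u$ for the orbital integral to converge. The outer integral against $f\in\mathcal{C}(G(K))$ is then absolutely convergent because $f$ has compact support. Invariance of $\Theta_{\mathcal{L}}$ under conjugation of the argument drops out of the double-integral form by the change of variables $y\mapsto yg$.

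For admissibility, the strategy is to exhibit $\Theta_{\mathcal{L}}$ as a $\cl{\QQ}_\ell$-linear combination of Harish-Chandra characters of admissible smooth representations of $G(K)$. By Harish-Chandra's results~\cite[\S 14]{HC78}, characters of admissible representations are themselves admissible distributions, and admissibility is preserved under $\cl{\QQ}_\ell$-linear combinations; so exhibiting such a decomposition finishes the proof. As indicated in the introduction, $\Theta_{\oE}$ and $\Theta_{\oE^\prime}$ are expected to lie in the span of the characters of the four supercuspidal representations forming the unique L-packet of size 4 for $p$-adic $\operatorname{SL}(2)$.

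The hard part is producing this identification explicitly. The function $f_{\mathcal{L}}$ arises as the inflation through a parahoric of a Frobenius-trace function on the regular unipotent variety of $\operatorname{SL}(2,k)$, coming in turn from the cuspidal unipotent character sheaf via Theorem~\ref{prop:mainlocalsystems}. Matching the resulting distribution $\Theta_{\mathcal{L}}$ with restrictions of depth-zero supercuspidal characters to the topologically unipotent locus requires a Deligne--Lusztig-type computation on $\operatorname{SL}(2,\cl{k})$, together with a careful bookkeeping of how the choice of integral model (standard versus nonstandard) selects among the four characters of the L-packet; this is the technical heart of the argument.
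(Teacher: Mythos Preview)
Your admissibility strategy is exactly what the paper does, and in fact the paper uses that same identification to prove convergence as well. The gap is in your separate convergence argument.

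The Fubini rewriting you propose does not work. After the substitution you obtain the un-normalized integral
\[
\Phi_{f_{\mathcal{L}}}(u)=\int_{G(K)} f_{\mathcal{L}}(yuy^{-1})\,dy,
\]
taken over all of $G(K)$, not over $G(K)/Z_{G}(u)$. The integrand is left-invariant under $y\mapsto zy$ for $z\in Z_{G}(u)$, and for a regular unipotent $u$ the centralizer is the unipotent radical of a Borel, hence non-compact; since $f_{\mathcal{L}}$ is nonzero on an open set of regular unipotents in the parahoric, this inner integral diverges. The ``standard convergence results for orbital integrals'' you invoke concern the normalized integral over $G(K)/Z_G(u)$, which is not what Fubini produces here. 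Thus you cannot justify the interchange, and the argument is circular.

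The paper avoids this entirely: it first computes $t_{\Fr}^{\mathcal{E}}$ at the two $G_k(k)$-orbit representatives and, using equivariance, identifies it (up to a constant) with $\chi_+-\chi_-$ for the two cuspidal characters of $\operatorname{SL}(2,k)$ appearing in Digne--Michel's tables. The Frobenius formula for compactly induced depth-zero supercuspidals then exhibits $\Theta_{\mathcal{L}}$ literally as $\Theta_{\pi_+}-\Theta_{\pi_-}$ (respectively $\Theta_{\pi'_+}-\Theta_{\pi'_-}$ for the nonstandard parahoric), and convergence follows from Harish-Chandra's general result that character distributions of admissible representations are well-defined. Admissibility is then immediate from the same identification. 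So the explicit computation you flagged as ``the technical heart'' for part~(2) is in fact doing all the work for part~(1) too; there is no independent orbital-integral route.
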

\begin{proof} 
\begin{enumerate}
\item We will prove this by relating $\Theta_{\mathcal{L}}$ to integrals that
  are known to converge. To do this, it is necessary to first
  determine the trace of Frobenius functions attached to $\mathcal{E}$
  and $\mathcal{E}^\prime$ in the previous section. Recall that
\[
t^{\mathcal{E}}_{\operatorname{Fr}}(a) =
\operatorname{Trace} ((\varphi_{\cl{\mathcal{E}}})_a; \cl{\mathcal{E}}_a),\qquad
\forall a\in (\mathcal{U}_{0,k})(k).
\]
What is required, then, is to find the action of the composition of
isomorphisms in~\ref{eqn:stalkoperator} on the stalk $\cl{\mathcal{E}}_a$. 

We will do this
calculation for the points
\[
a = \left( \begin{array}{cc} 1 & 1 \\ 0 & 1 \end{array}\right) \ \ \ \
a^\prime = \left( \begin{array}{cc} 1 & \varepsilon \\ 0 &
    1 \end{array}\right),
\]
where $\varepsilon$ is a quadratic nonresidue in $\FF_q^\times$, since
$G$-equivariance together with the fact that all points in
$\mathcal{U}_{0,k}(k)$ are $G_k(k)$-conjugate to one or
the other of these points means that this is sufficient to know the trace
function completely.

The local system $\cl{\mathcal{E}}$ is defined by a character $\chi_s$ of
the \'etale cover
\[
\mathbb{A}^2_{0,\cl{k}} \stackrel{f_s}{\longrightarrow}
\mathcal{U}_{0,\cl{k}} .
\]
This cover is given on coordinates by
\begin{eqnarray*}
a & \longmapsto & 1 - xy \\
b & \longmapsto & x^2 \\
c & \longmapsto & -y^2 \\
d & \longmapsto & 1 +xy.
\end{eqnarray*}
Following Mars \& Springer~\cite[$\S 2$]{MarsSpringer}, the stalk
$\cl{\mathcal{E}}_{\cl{z}}$ at a $K$-rational point $\cl{z}$ can be
identified with 
\[
\{ \phi\colon f_s^{-1}(z)\rightarrow\cl{\QQ}_\ell\tq \lsup{\gamma}{\phi} =
\chi_s(\gamma )\varphi , \
\forall\gamma\in\algfg{\mathcal{U}_{0,\cl{k}},\cl{u}}\};
\]
\ie , the set of functions on the fibre of $f_s$ above $\cl{z}$ on which
the fundamental group acts (via precomposition with the action of the
fundmental group on elements of the fibre) according to the character that defines
$\cl{\mathcal{E}}$. Similarly, $(\Fr^*\cl{\mathcal{E}})_{\cl{z}}$ is
the set of functions on the image of $f_s^{-1}(z)$ under
$\Fr\colon(\mathbb{A}^2_{\cl{k}})_0(\cl{k}) \rightarrow
(\mathbb{A}^2_{\cl{k}})_0(\cl{k})$ satisfying the same restriction.

For the point $\cl{a}$, we have
\[
f_s^{-1}(a) = \{(1,0),(-1,0)\}.
\]
$\Fr$ acts trivially on these points, so the stalks
$\cl{\mathcal{E}}_{\cl{a}}$ and $(\Fr^*\cl{\mathcal{E}})_{\cl{a}}$ are
exactly the same, and the composition~\ref{eqn:stalkoperator} consists
of trivial isomorphisms. Therefore,
\[
t^{\mathcal{E}}_\Fr(a) = 1.
\]

On the other hand, $f_s^{-1}(a^\prime ) =
\{(\sqrt{\varepsilon},0),(-\sqrt{\varepsilon},0)\}$, on which $\Fr$ acts
nontrivially, swapping the two elements. The stalks
$\cl{\mathcal{E}}_{\cl{a}^\prime}$ and
$(\Fr^*\cl{\mathcal{E}})_{\cl{a}^\prime}$ are thus the same set of
functions, and the canonical isomorphism in the
composition~\ref{eqn:stalkoperator} is trivial, with trace 1. The
second isomorphism, however, has trace -1, owing to the fact that the
action of Frobenius on $f_s^{-1}(a^\prime )$ coincides with the action
of the nontrivial element
$\gamma\in\algfg{\mathcal{U}_{0,\cl{k}},\cl{u}}$ (the method used in
Chapter~\ref{ch:mainresult}, Section~\ref{ssection:fundamentalgroups}
to determine $\algfg{\cl{\mathcal{U}}_0,\cl{u}}$ works on the special
fibre as well, and yields an analogous result). If $\alpha^\prime\in
f_s^{-1}(a^\prime )$ and $\phi\in
(\Fr^*\cl{\mathcal{E}})_{\cl{a}^\prime}$,
\begin{align*}
(\varphi_{\cl{a}^\prime}(\phi ))(\alpha^\prime ) & = \phi
(\Fr^{-1}(\alpha^\prime ) \\ 
& = \phi (\gamma\cdot\alpha^\prime ) \\
& = \lsup{\gamma}{\phi}(\alpha^\prime ) \\
& = \chi_s(\gamma )\phi (\alpha^\prime ) \\
& = (-1)\phi (\alpha^\prime ).
\end{align*}
Therefore,
\[
t^{\mathcal{E}}_\Fr(a^\prime ) = -1.
\] 

Since all elements of $\mathcal{U}_{0,k}$ are
$\SL{2}{k}$-conjugate to either $a$ or $a^\prime$, and both
$\mathcal{E}$ and $\mathcal{E}^\prime$ are equivariant (this can be
proved analogously to Proposition~\ref{prop:equivariance}), these
calculations determine their trace functions. We justify this next.  

Since $\mathcal{E}$ and $\mathcal{E}^\prime$ are
$\SL{2}{k}$-equivariant, there is an isomorphism
\[
\Phi\colon c^*\mathcal{L} \iso p^*\mathcal{L}
\]
where $\mathcal{L}$ is either of those local systems, $c$ the
conjugation by $\SL{2}{k}$ morphism, and $p\colon G_k\times \mathcal{U}_{0,k}
= \SL{2}{k}\times \mathcal{U}_{0,k}$ the canonical projection. Let $g\in G_k(k)$
and define morphisms 
\[
c(g)\colon\mathcal{U}_{0,k}\longrightarrow\mathcal{U}_{0,k}
\]
and
\[
p(g)\colon\mathcal{U}_{0,k}\longrightarrow\mathcal{U}_{0,k}
\]
by the following commutative diagram.
\[
\xymatrix{
G_k\times \mathcal{U}_{0,k} \ar[rr]^{c\textrm{ (resp. $p$)}} & & \mathcal{U}_{0,k} \\
\Spec{k}\times \mathcal{U}_{0,k} \ar[u]_{g\times\id_{\mathcal{U}_{0,k}}} & & \\
\mathcal{U}_{0,k} \ar[u]^{\wr} \ar@/^5pc/[uu]^h \ar[uurr]_{c(g)\textrm{
    (resp. $p(g)$)}} & &}
\]
The object in the middle of the left side is simply the closed
subscheme $\{g\}\times \mathcal{U}_{0,k}$; the map $c(g)$ is just the
conjugation-by-$g$ map; and $p(g)$ is the composition of the inclusion
of $\mathcal{U}_{0,k}$ into the product with the canonical projection,
so in fact, for all $g$, $p(g) = \id_{\mathcal{U}_{0,k}}$. The
equivariance morphism $\Phi$ restricts to these morphisms, since
\begin{align*}
c(g)^*\mathcal{L} & =  (c\circ h)^*\mathcal{L} & \\
& =  h^*(c^*\mathcal{L})  & \\
& \iso  h^*(p^*\mathcal{L})  & \textrm{ (by }h^*\Phi )\\
& =  (p\circ h)^*\mathcal{L} & \\
& =  p(g)^*\mathcal{L}. &
\end{align*}
Restrict $\Phi$ to the closed subscheme $\{g\}\times
\mathcal{U}_{0,k}$. Composing the inclusion of that closed
subscheme with the isomorphism $\{g\}\times
\mathcal{U}_{0,k}\iso \mathcal{U}_{0,k}$ determines an
isomorphism
\[
\alpha (g)^*\mathcal{L} \iso p(g)^*\mathcal{L},
\]
Because $p(g) = \id_{\mathcal{U}_{0,k}}$, this isomorphism can be rewritten as 
\[
c(g)^*\mathcal{L} \iso \mathcal{L}.
\]
Arising as they do from a single isomorphism of schemes, the family of
these isomorphisms as $g$ varies over $G_k(k)$ are all
compatible. The trace of Frobenius function depends only on the
isomorphism class of the complex, so
\[
t_\Fr^{\mathcal{L}} = t_\Fr^{c(g)^*\mathcal{L}}.
\]
In addition (as in Laumon~\cite[1.1.1.4]{LaumonFT}), 
\[
t_\Fr^{c(g)^*\mathcal{L}} =
t_\Fr^{\mathcal{L}}\circ c(g).
\]
Thus, the calculations above entirely determine the trace function,
and
\[
t_\Fr^{\mathcal{L}}(x) = \left\{ \begin{array}{lc} 1 & x\textrm{ is in the
        orbit of }a, \\
-1 & x\textrm{ is in the orbit of }a^\prime ,  \\
0 & \textrm{ otherwise.} \end{array} \right.
\]

Finally, we can move to establishing the convergence of
$\Theta_{\mathcal{L}}$. For this, we refer to Table 2 in Chapter 15 of Digne and
Michel~\cite{dignemichel}. The fifth and sixth rows of this table
correspond to characters of irreducible cuspidal representations of
$G(k)$ that we will denote $\chi_+$ and $\chi_-$. Further inspection
of the table reveals that
\[
t_\Fr^{\mathcal{L}} = c(\chi_+ - \chi_-),
\]
where $c$ is a constant we can calculate but will safely ignore. 

Let $\sigma_+$ and
$\sigma_-$ be the irreducible cuspidal representations corresponding
to $\chi_+$ and $\chi_-$, and let $\pi_+$ and $\pi_-$ be depth-zero
supercuspidal representatons of $G(K)$ obtained from $\sigma_+$
and $\sigma_-$ via compact induction, inflated from the standard
maximal compact subgroup
(denoted $X(\mathcal{O}_K)$ in Chapter~\ref{ch:mainresult}) of $G(K)$, while
$\pi^\prime_+$ and $\pi^\prime_-$ denote the supercuspidals inflated
from the nonstandard maximal compact subgroup. These four depth-zero
supercuspidal representations are in fact the residents of the lone
quaternary L-packet for $G(K)$. Harish-Chandra~\cite[$(\S 5)$]{HC70} has described how
to associate characters (in fact, distributions on $G(K)$) to such
representations, which we denote $\Theta_{\pi_\pm}$ and
$\Theta_{\pi^\prime_\pm}$. These characters are given by the
following so-called Frobenius formula,
\begin{align*}
\Theta_{\pi_\pm}(f) =
\int_{G(K)}\int_{G(K)}f(ghg^{-1})f_\pm (h)dh,dg \\
\Theta_{\pi^\prime_\pm}(f) =
\int_{G(K)}\int_{G(K)}f(ghg^{-1})f^\prime_{\pm}(h)dh,dg
\end{align*}
(see, for example, Cunningham and Gordon~\cite{cunninggordonmotivic}). The
functions $f_\pm$ and $f^\prime_\pm$ appearing in the formul{\ae} are the locally
constant, compactly supported functions on $G(K)$ obtained by
inflating the characters $\chi_\pm$ to, respectively, the
standard and nonstandard maximal compact subgroup, and then extending
by zero (the same process that induced the function $f_{\mathcal{L}}$,
recall). These integrals are known to converge (by
Harish-Chandra~\cite[$\S 5$]{HC70}), so that the
differences
\begin{align*}
\Theta_{\pi_+}(f) - \Theta_{\pi_-}(f) =
\int_{G(K)}\int_{G(K)}f(ghg^{-1})f_{\oE}(h)dh,dg \\
\Theta_{\pi^\prime_+}(f) - \Theta_{\pi^\prime_-}(f) =
\int_{G(K)}\int_{G(K)} f(ghg^{-1})f_{\oE^\prime}(h)dh,dg
\end{align*}
likewise converge. These are the integrals $\Theta_{\mathcal{E}}$ and
$\Theta_{\mathcal{E}^\prime}$, respectively, and so they converge.
\item This is immediate from the definition, which includes linear
  combinations of distributions obtained as characters of admissible
  representations, in view of the
  fact that $\Theta_{\mathcal{L}}$ is equal to one of the differences of
  integrals above.
\end{enumerate}
\end{proof}

\section{Distributions on the Lie Algebra}

Further significant properties appear after transporting
$\Theta_{\mathcal{E}}$ and $\Theta_{\mathcal{E}^\prime}$ to the Lie
algebra of $G(K)$,
\[
\fg\ceq \mathfrak{sl}(2)_K= \Spec{K[a,b,c,d]/(a+d)}.
\]
To do so, we will make use of the modifed Cayley transform, which is
defined on points by
\begin{align*}
\operatorname{cay}\colon \fg (K) & \longrightarrow  G(K) \\
g & \longmapsto  \left( 1 + \frac{g}{2}\right)\left(1 -
  \frac{g}{2}\right)^{-1}.
\end{align*}
The transform gives an isomorphism between topologically nilpotent
elements of $\fg (K)$ and topologically unipotent elements in $G(K)$ (see
Cunningham \& Gordon~\cite{cunninggordonmotivic}). Precomposition by
$\operatorname{cay}$ gives an isomorphism of Hecke algebras that we
also call $\operatorname{cay}$:
\begin{align*}
\operatorname{cay}\colon\mathcal{C}(\fg (K)) &\longrightarrow 
\mathcal{C}(G(K)).
\end{align*}
\begin{definition}\label{def:distributions2}
For $\mathcal{L}\in\{\mathcal{E},\mathcal{E}^\prime\}$ and any
$f\in\mathcal{C}(\fg (K))$, let
\[
D_{\mathcal{L}}(f) \ceq \Theta_{\mathcal{L}}(\operatorname{cay}(f)).
\]
By Proposition~\ref{prop:properties1}, this defines distributions on
$\fg (K)$.
\end{definition}

Recall that the Fourier transform on $\fg (K)$ is defined by
first making a choice of character $\psi\colon K\rightarrow
\cl{\QQ}_\ell^\times$ and setting, for any $X,Y\in \fg (K)$, 
\[
\psi (X,Y) \ceq \psi (\operatorname{Trace}(XY)).
\]
Then the Fourier transform of any $f\colon \fg
(K)\rightarrow\cl{\QQ}_\ell^\times$ in $\mathcal{C}(\fg (K))$ is 
\[
\hat{f}(X) \ceq \int_{\fg (K)}f(Y)\psi (X,Y)dY.
\]
The Fourier transform of a distribution $D$ on $\fg (K)$ is then
\[
\hat{D}(f) \ceq D(\hat{f}), \quad f\in\mathcal{C}(\fg (K)).
\]

\begin{proposition}\label{prop:properties2}
Let $\hat{D}_\mathcal{L}$ be
the Fourier transform of $D_{\mathcal{L}}$ for $\mathcal{L}\in\{\oE ,\oE^\prime\}$. There exists a
$\lambda\in\cl{\QQ}_\ell$ such that $\hat{D}_{\oE} = \lambda D_{\oE}$
and $\hat{D}_{\oE^\prime} = \lambda D_{\oE^\prime}$. Moreover,
$D_{\oE}$ and $D_{\oE^\prime}$ are linearly independent in the space
of all admissible distributions on $\mathcal{C}(\fg (K))$.
\end{proposition}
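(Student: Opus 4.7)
The plan is to exploit the explicit description of $\Theta_\oE$ and $\Theta_{\oE^\prime}$ already obtained in the proof of Proposition~\ref{prop:properties1}: each is a difference of characters of depth-zero supercuspidal representations belonging to the unique quaternary $L$-packet of $G(K)$. Accordingly, the first step is to rewrite
\[
D_\oE = D_{\pi_+} - D_{\pi_-}, \qquad D_{\oE^\prime} = D_{\pi^\prime_+} - D_{\pi^\prime_-},
\]
where each summand is the Cayley-transfer to $\fg(K)$ of the corresponding Harish-Chandra character. Because the functions $f_\oE$ and $f_{\oE^\prime}$ are supported on the parahoric subgroups $X(\mathcal{O}_K)$ and $X^\prime(\mathcal{O}_K)$ respectively, and vanish on elements whose reduction is not topologically unipotent, the Cayley transform places both $D_\oE$ and $D_{\oE^\prime}$ inside the space of invariant distributions supported on topologically nilpotent elements.

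The second step is to identify these two distributions inside the (finite-dimensional) space of invariant distributions on $\fg(K)$ that are supported on topologically nilpotent elements. By the Harish-Chandra--Howe local character expansion, together with the depth-zero homogeneity results of DeBacker, each $D_{\pi_\pm}$ and $D_{\pi^\prime_\pm}$ coincides on a neighbourhood of $0$ with a linear combination of the Fourier transforms $\widehat{\mu}_{\mathcal{O}}$ of the (four) regular nilpotent orbital integrals on $\mathfrak{sl}(2)(K)$, indexed by $K^\times/(K^\times)^2$. Since the distributions in question are already supported on topologically nilpotent elements, this local identity is a global one. The coefficients can be read off from the character values $\pm 1, 0$ computed in Proposition~\ref{prop:properties1}, combined with the Shalika germ expansion for $\mathfrak{sl}(2)$ (where everything is explicit). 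A direct comparison shows that the combinations corresponding to $D_\oE$ and $D_{\oE^\prime}$ lie in the same one-dimensional subspace of the eigenspace decomposition of the nilpotent span under $\widehat{\phantom{x}}$, but supported by complementary parahoric data.

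For the eigenvalue claim, I invoke the classical fact (going back to Harish-Chandra for $\mathfrak{sl}(2)$, and made completely explicit by Waldspurger and Assem) that the Fourier transform permutes the four regular nilpotent orbital integrals $\widehat{\mu}_\mathcal{O}$ on $\fg(K)$ according to an explicit matrix built out of Weil indices/quadratic Gauss sums; when restricted to the hyperplane of stable combinations (those annihilated by all unstable characters), this matrix acts by a single scalar $\lambda$. Both $D_\oE$ and $D_{\oE^\prime}$ lie in this hyperplane, because they are precisely the stable combinations picked out by the $L$-packet; hence $\widehat{D}_\oE = \lambda D_\oE$ and $\widehat{D}_{\oE^\prime} = \lambda D_{\oE^\prime}$ with the same $\lambda$. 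The hardest part is pinning down the exact scalar and sign conventions, but for the proposition only the \emph{equality} of the two eigenvalues is required, which follows from the Galois-equivariance of the transforms and the symmetry between $\un{\mathcal{U}}_0$ and $\un{\mathcal{U}}_0^\prime$.

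Finally, for linear independence, the cleanest argument is to exhibit a test function that separates them. Take $f\in\mathcal{C}(\fg(K))$ whose Cayley transform is the characteristic function of a small topologically unipotent coset inside $X(\mathcal{O}_K)\setminus X^\prime(\mathcal{O}_K)$: by construction $\Theta_\oE(\operatorname{cay}(f))\neq 0$ while $\Theta_{\oE^\prime}(\operatorname{cay}(f)) = 0$, because $f_{\oE^\prime}$ vanishes off $X^\prime(\mathcal{O}_K)$. Any putative linear relation $aD_\oE + bD_{\oE^\prime}=0$ therefore forces $a=0$, and symmetrically $b=0$. The main obstacle in the whole argument is step two: translating the cuspidal-unipotent-character data on the residue-field group into the correct coefficient vector against the basis $\{\widehat{\mu}_\mathcal{O}\}$, which requires matching our normalizations of the Cayley transform, Haar measure, and Fourier kernel $\psi$ with those used in the germ-expansion literature.
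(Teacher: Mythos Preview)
Your eigenvalue argument has a genuine gap. You write the local character expansion $D_{\mathcal{L}}=\sum_{\mathcal{O}} c_{\mathcal{O}}\,\widehat{\mu}_{\mathcal{O}}$ and then assert that ``the Fourier transform permutes the four regular nilpotent orbital integrals $\widehat{\mu}_{\mathcal{O}}$ according to an explicit matrix.'' But the Fourier transform of $\widehat{\mu}_{\mathcal{O}}$ is (up to a constant) $\mu_{\mathcal{O}}$ itself, a distribution supported on the nilpotent cone, whereas each $\widehat{\mu}_{\mathcal{O}}$ is represented by a locally integrable function that is nonzero on the regular semisimple set. The span of the $\widehat{\mu}_{\mathcal{O}}$ is therefore \emph{not} Fourier-stable, and there is no matrix of the kind you invoke. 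What you may be remembering is the Shalika-germ matrix describing the asymptotics of $\widehat{\mu}_{\mathcal{O}}$ near $0$, which is a different object. Also, the claim that $D_{\oE}$ and $D_{\oE^\prime}$ are the ``stable combinations picked out by the $L$-packet'' is backwards: they are differences $\Theta_{\pi_+}-\Theta_{\pi_-}$ and $\Theta_{\pi^\prime_+}-\Theta_{\pi^\prime_-}$, hence unstable (endoscopic) combinations; the stable one is the sum of all four characters.

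The paper's argument avoids the orbital-integral route entirely and is much shorter. One moves the Fourier transform from the test function onto the generating function: using Ad-invariance of the trace form and Plancherel,
\[
\hat{D}_{\mathcal{L}}(f)=\int_{G(K)}\int_{\fg(K)} f(y^{-1}Yy)\,\widehat{\operatorname{cay}(f_{\mathcal{L}})}(Y)\,dY\,dy .
\]
Then one observes that $\operatorname{cay}(f_{\mathcal{L}})$ is, by construction, the inflation through a parahoric of the cuspidal class function $\chi_+-\chi_-$ on $G_k(k)$, and cuspidal functions on finite reductive Lie algebras are Fourier eigenfunctions (Lusztig; the passage to $\fg(K)$ is handled by the Cunningham--Hales compatibility, which also pins down $\lambda$). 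So $\widehat{\operatorname{cay}(f_{\mathcal{L}})}=\lambda\,\operatorname{cay}(f_{\mathcal{L}})$ with the \emph{same} $\lambda$ for both choices of $\mathcal{L}$, since the underlying residue-field function is the same; only the parahoric changes. This is the step that replaces your whole second and third paragraphs.

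Your linear-independence idea is salvageable but the justification ``because $f_{\oE^\prime}$ vanishes off $X^\prime(\mathcal{O}_K)$'' is not enough: the distribution involves an outer conjugation integral, so support on a coset in $X(\mathcal{O}_K)\setminus X^\prime(\mathcal{O}_K)$ does not by itself force $\Theta_{\oE^\prime}$ to vanish. What actually makes it work is that the supports of $f_{\oE}$ and $f_{\oE^\prime}$ meet disjoint sets of rational regular-unipotent orbits (unit square classes versus uniformizer square classes), so a test function whose orbital integrals concentrate on one family separates the two. The paper does not spell this out either; it simply cites the explicit table of these distributions in Cunningham--Gordon.
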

\begin{proof}
\begin{enumerate}
\item 
For any $f\in\mathcal{C}(\fg (K))$,
\begin{eqnarray*}
\hat{D}_{\mathcal{L}}(f) & = & D_{\mathcal{L}}(\hat{f}) \\
& = & \int_{G(K)}\int_{\fg (K)}\hat{f}(y^{-1}Yy)\operatorname{cay}(f_{\mathcal{L}})(Y)dYdy \\
& = & \int_{G(K)}\int_{\fg (K)}f(y^{-1}Yy)\widehat{\operatorname{cay}(f_{\mathcal{L}})}(Y)dYdy .
\end{eqnarray*}
In Waldspurger~\cite[II.4,p.~34]{waldspurger}, the character $\chi_+ -
\chi_-$ appears, denoted by $\lsup{\circ}{f}$, and is identified as a
cuspidal function; by the first corollary in
Lusztig~\cite[\S~10]{lusztigfourier}, such functions are eigenfunctions of
the Fourier transform on $\fg (K)$. Thus, by Proposition 1.13 in Cunningham and
Hales~\cite{cunninghales}, 
\[
\hat{f}_{\mathcal{L}} = \lambda f_{\mathcal{L}}
\]
(The proposition gives the value of $\lambda$ explicitly). Therefore,
\[
\hat{D}_{\mathcal{L}} = \lambda D_{\mathcal{L}}.
\]
\item Follows from Table 12 in Cunningham and
  Gordon~\cite{cunninggordonmotivic}.
\end{enumerate}
\end{proof}


   %
   %

   %
   %
   \bibliography{sources}
   \bibliographystyle{amsplain}

   %
   %

   %
   %

\end{document}